\documentclass[11pt,a4paper,twoside,english]{article}
\RequirePackage{babel}
\usepackage[T1]{fontenc}
\usepackage[utf8]{inputenc}
\RequirePackage{microtype}
\RequirePackage{csquotes}

\usepackage[charter,expert,cal=cmcal]{mathdesign}
\usepackage[scaled=.96,sups]{XCharter}
\usepackage[scaled=1.04,varqu,varl]{inconsolata}

\usepackage[onehalfspacing,nodisplayskipstretch]{setspace}

\usepackage[usenames,dvipsnames,table]{xcolor}
	\colorlet{artcolor}{Brown}
	\colorlet{artcolorbg}{white}
	\colorlet{artcolortext}{black}
	\colorlet{notecolor}{Blue}

\colorlet{ARTCOLOR}{artcolor}
\colorlet{ARTCOLORBG}{artcolorbg}
\colorlet{ARTCOLORTEXT}{artcolortext}
\colorlet{NOTECOLOR}{notecolor}
\colorlet{artcolor-secnumber}{artcolor!90!artcolorbg}
\colorlet{artcolor-secnumberbg}{artcolor!12!artcolorbg}
\colorlet{artcolor-secnamebg}{artcolor!5!artcolorbg}
\colorlet{artcolor-abstractbg}{artcolor!3!artcolorbg}
\colorlet{artcolor-boxbg}{artcolor!3!artcolorbg}
	\colorlet{artcolor-boxline}{artcolor-secnumberbg}
\usepackage[pagecolor={artcolorbg}]{pagecolor}
\color{artcolortext}

  \usepackage[nomarginpar,hmargin=2.5cm,vmargin=2.5cm]{geometry}

\PassOptionsToPackage{inline}{enumitem}
\RequirePackage{enumitem}

\usepackage{tabularx,booktabs}


\RequirePackage{mathtools}
\usepackage[framemethod=TikZ]{mdframed}
\PassOptionsToPackage{amsmath,thmmarks,hyperref}{ntheorem}
\RequirePackage{ntheorem}
\RequirePackage{thmtools}
\RequirePackage{fixmath}
\RequirePackage{xfrac}
\RequirePackage{cases}
\RequirePackage{latexsym}
\RequirePackage{upgreek}

\RequirePackage{scalerel}
\RequirePackage{tikz}
\usetikzlibrary{svg.path}
\definecolor{orcidlogocol}{HTML}{A6CE39}
\tikzset{%
  orcidlogo/.pic={%
    \fill[orcidlogocol] svg{M256,128c0,70.7-57.3,128-128,128C57.3,256,0,198.7,0,128C0,57.3,57.3,0,128,0C198.7,0,256,57.3,256,128z};
    \fill[white] svg{M86.3,186.2H70.9V79.1h15.4v48.4V186.2z}
                 svg{M108.9,79.1h41.6c39.6,0,57,28.3,57,53.6c0,27.5-21.5,53.6-56.8,53.6h-41.8V79.1z M124.3,172.4h24.5c34.9,0,42.9-26.5,42.9-39.7c0-21.5-13.7-39.7-43.7-39.7h-23.7V172.4z}
                 svg{M88.7,56.8c0,5.5-4.5,10.1-10.1,10.1c-5.6,0-10.1-4.6-10.1-10.1c0-5.6,4.5-10.1,10.1-10.1C84.2,46.7,88.7,51.3,88.7,56.8z};
  }%
}
\RequirePackage{url}
\DeclareRobustCommand\orcid[1]{\texorpdfstring{\href{https://orcid.org/#1}{\mbox{\scalerel*{%
\begin{tikzpicture}[yscale=-1,transform shape]%
\pic{orcidlogo};%
\end{tikzpicture}%
}{|}}}}{}}

\PassOptionsToPackage{pdfusetitle,hidelinks,bookmarksnumbered,bookmarksopen}{hyperref}
\RequirePackage{hyperref}
\RequirePackage{cleveref}
\crefname{assumption}{assumption}{assumptions}
\crefname{ineq}{inequality}{inequalities}
\creflabelformat{ineq}{\upshape(#2#1#3)}
\crefformat{ineq}{ineq.~\upshape(#2#1#3)}


\theoremstyle{nonumberplain}
\theoremheaderfont{\normalfont\itshape\color{artcolor}}
\theoremseparator{.}
\theorembodyfont{\normalfont}
\theoremsymbol{\small\(\color{artcolor}\square\)}
\newtheorem{proof}{Proof}

\theoremsymbol{}
\declaretheoremstyle[
    headfont=\normalfont\color{artcolor}, 
    bodyfont=\normalfont\itshape\color{artcolortext},
    headpunct={.},
    spaceabove=\parsep,
    mdframed={
      	linecolor=artcolor-boxline,
      	linewidth=1pt,
      	backgroundcolor=artcolor-boxbg,
  	innerleftmargin=5pt,
  	innerrightmargin=6pt,
         innertopmargin=6pt,
         innerbottommargin=6pt,         
  	topline=false,rightline=false,bottomline=false,
  	skipabove=1.2\topsep,
  	skipbelow=0pt} 
]{thmstyle}
\declaretheorem[style=thmstyle, name=Theorem,     numberwithin=section]{theorem}
\declaretheorem[style=thmstyle, name=Proposition, numberlike=theorem]{proposition}
\declaretheorem[style=thmstyle, name=Lemma,       numberlike=theorem]{lemma}
\declaretheorem[style=thmstyle, name=Corollary,     numberlike=theorem]{corollary}
\declaretheoremstyle[
    headfont=\color{artcolor}, 
    bodyfont=\normalfont\color{artcolortext},
    headpunct={.},
    spaceabove=\parsep,
    mdframed={
      	linecolor=artcolor-boxline,
      	linewidth=1pt,           
      	backgroundcolor=artcolor-boxbg,  
  	innerleftmargin=5pt,
  	innerrightmargin=6pt,
         innertopmargin=6pt,
         innerbottommargin=6pt,         
  	topline=false,rightline=false,bottomline=false,
  	skipabove=1.2\topsep,
  	skipbelow=0pt} 
]{defstyle}

\declaretheoremstyle[
    headfont=\color{artcolor}, 
    bodyfont=\normalfont\color{artcolortext},
    headpunct={.},
    spaceabove=\parsep,
    mdframed={
      	linecolor=artcolor-boxline,
      	linewidth=1pt,          
      	backgroundcolor=artcolorbg,  
  	innerleftmargin=5pt,
  	innerrightmargin=6pt,
          innertopmargin=6pt,
          innerbottommargin=6pt,         
  	 topline=false,rightline=false,bottomline=false,
  	 skipabove=1.2\topsep,
  	 skipbelow=0pt} 
]{remstyle}
\declaretheoremstyle[
    headfont=\color{artcolor}, 
    bodyfont=\normalfont\color{artcolortext},
    headpunct={.},
    spaceabove=\parsep,
    mdframed={
      	linecolor=artcolor-boxline,
      	linewidth=1pt,          
      	backgroundcolor=artcolor-boxbg,  
  	innerleftmargin=5pt,
  	innerrightmargin=6pt,
          innertopmargin=6pt,
          innerbottommargin=6pt,         
  	 topline=false,rightline=false,bottomline=false,
  	 skipabove=1.2\topsep,
  	 skipbelow=0pt} 
]{nonumberremstyle}
\declaretheorem[style=remstyle, name=Remark,  numberlike=theorem]{remark}

\RequirePackage{titling}
\makeatletter
\setlength{\droptitle}{-60pt}
\pretitle{\begin{center}\fontsize{.68cm}{.9cm}\selectfont\color{artcolor}\itshape}
\posttitle{\par\end{center}}
\newcommand\@address{}
\newcommand\address[1]{\renewcommand\@address{#1}}
\RequirePackage{url}
\newcommand\@email{}
\newcommand\email[1]{\renewcommand\@email{#1}}
\preauthor{\vspace*{-.8em}\begin{center}\large\color{artcolortext}}
\postauthor{\par{\scriptsize{\color{artcolor}\@email}\par\vskip .3em \textsc{\@address}\par}\end{center}\vspace*{-1em}}
\predate{\begin{center}\color{artcolor}\small\itshape}
\postdate{\par\end{center} \vskip -1em\color{artcolortext}}
\makeatother

\renewenvironment{abstract}{%
\begin{center}
\begin{minipage}{.85\textwidth}
\begin{mdframed}[backgroundcolor=artcolor-abstractbg,hidealllines=true]\small\textsc{\color{artcolor}\normalsize Abstract.}\color{artcolortext}\@}
{\end{mdframed}\end{minipage}\end{center}\color{artcolortext}\vskip\topsep}

\newcommand\subjclass[2][2010]{\vspace{-3\topsep}\begin{center}\begin{minipage}{.85\textwidth}\begin{mdframed}[backgroundcolor=artcolorbg,hidealllines=true]\footnotesize Mathematics Subject Classification (#1): #2.\end{mdframed}\end{minipage}\end{center}\vspace*{-1em}}

\newcommand\keywords[1]{\vspace{-3\topsep}\begin{center}\begin{minipage}{.85\textwidth}\begin{mdframed}[backgroundcolor=artcolorbg,hidealllines=true]\footnotesize\textit{Key words and phrases:} #1.\end{mdframed}\end{minipage}\end{center}}

\RequirePackage{lastpage}
\RequirePackage{fancyhdr}
\makeatletter
\fancypagestyle{plain}{%
\fancyhf{} 
\fancyfoot[C]{\scshape\scriptsize\thepage\ of \pageref{LastPage}} 

}

\newcommand\@titlerunning{}
\newcommand\titlerunning[1]{\renewcommand\@titlerunning{#1}}
\newcommand\@authorrunning{}
\newcommand\authorrunning[1]{\renewcommand\@authorrunning{#1}}

\fancyhf{}
\fancyhead[LE,RO]{\scshape\scriptsize\thepage/\pageref{LastPage}}
\fancyhead[CE]{\scshape\scriptsize\nouppercase\@titlerunning}
\fancyhead[CO]{\scshape\scriptsize\nouppercase\@authorrunning}
\cfoot{}

\pagestyle{fancy}
\makeatother

\usepackage[explicit]{titlesec}
\titleformat{\section}{\normalsize}{}{0pt}{%
\begin{tabularx}{\linewidth}{lX}%
\large\cellcolor{artcolor-secnumberbg}\textcolor{artcolor-secnumber}{\thesection} & \large\cellcolor{artcolor-secnamebg}\textcolor{artcolor}{#1}%
\end{tabularx}%
}
\titleformat{name=\section,numberless}{\normalsize}{}{0pt}{%
\begin{tabularx}{\linewidth}{lX}%
\large\cellcolor{artcolor-secnumberbg}\textcolor{artcolor-secnumber}{\Large${\star}$} & \large\cellcolor{artcolor-secnamebg}\textcolor{artcolor}{#1}%
\end{tabularx}%
}
\titleformat{\subsection}[runin]{\normalsize}{}{0pt}{%
\begin{tabular}{ll}%
\cellcolor{artcolor-secnumberbg}\textcolor{artcolor-secnumber}{\thesubsection} & \cellcolor{artcolor-secnamebg}\textcolor{artcolor}{#1.}%
\end{tabular}%
}
\titleformat{name=\subsection,numberless}[runin]{\normalsize}{}{0pt}{%
\begin{tabular}{ll}%
\cellcolor{artcolor-secnumberbg}\textcolor{artcolor-secnumber}{${\star}$} & \cellcolor{artcolor-secnamebg}\textcolor{artcolor}{#1.}%
\end{tabular}%
}

\RequirePackage{titletoc}
\titlecontents{section}
		   [1pc]
		   {\addvspace{2pt}}
		   {\contentsmargin{0pt}\small{\color{artcolor}\contentslabel{1pc}}}
		   {\hspace*{-1pc}\small{\color{artcolor}\parbox{1pc}{\normalsize\({ \star }\)}}}
		   {\small\itshape\contentspage}
		   []	   
\titlecontents{subsection}
		   [2.5pc]
		   {}
		   {\footnotesize\contentslabel{1.5pc}}
		   {\hspace*{-1.5pc}\small{\color{artcolor}\parbox{1.5pc}{\small\({ \star }\)}}}
		   {\footnotesize\itshape\contentspage}
		   []
\RequirePackage{multicol}

\setlength{\columnseprule}{1pt}
\setlength{\columnsep}{25pt}
\renewcommand\tableofcontents{\vskip\topsep%
    \section*{\contentsname
        \@mkboth{%
           \MakeUppercase\contentsname}{\MakeUppercase\contentsname}}%
           \vspace*{-1em}%
     \begin{multicols}{2}%
    \raggedcolumns\@starttoc{toc}%
    \end{multicols}%
    }

\usepackage[
backend=biber,
bibstyle=ieee,
citestyle=numeric-comp,
sorting=nyt,
url=false,
isbn=false,
doi=false
]{biblatex}


\newcommand{\myhref}[1]{%
 \ifboolexpr{%
   test {\ifhyperref}
   and
   not test {\iftoggle{bbx:url}}
   and
   not test {\iftoggle{bbx:doi}}
  }
  {\href{\doiorurl}{#1}}
  {#1}%
}

\DeclareFieldFormat{title}{\myhref{\mkbibemph{#1}}}
\DeclareFieldFormat
  [article,inbook,incollection,inproceedings,patent,thesis,unpublished]
  {title}{\myhref{\mkbibquote{#1\isdot}}}


\usepackage{wrapfig}
\usepackage[font=normalsize,labelfont={color=artcolor}]{caption}
\usepackage{subcaption}
\usepackage{tikz}
\usepackage{pgfplots}
\pgfplotsset{compat=newest}
\usetikzlibrary{fit,arrows.meta,shapes}


\makeatletter
\renewcommand*\env@matrix[1][c]{\hskip -\arraycolsep
  \let\@ifnextchar\new@ifnextchar
  \array{*\c@MaxMatrixCols #1}}
\makeatother

\makeatletter
\def\hdots@for#1#2{\multicolumn{#2}c%
  {\m@th\dotsspace@1.5mu\mkern-#1\dotsspace@
   \xleaders\hbox{$\m@th\mkern#1\dotsspace@\cdot\mkern#1\dotsspace@$}%
           \hskip\z@\@plus 1filll
   \mkern-#1\dotsspace@}%
   }
\makeatother




\RequirePackage{etoolbox}
\makeatletter
\patchcmd{\@adjustvertspacing}
  {\jot\baselineskip \divide\jot 4}
  {\jot=.5\baselineskip}
  {}{}
\makeatother


\newcommand*{\diffdchar}{\textnormal{d}}
\newcommand*{\dee}{\mathop{\diffdchar\!}}

\DeclareMathOperator{\support}{supp}

\DeclareMathOperator{\sign}{sgn}

\DeclarePairedDelimiterX{\norm}[1]{\lVert}{\rVert}{\ifblank{#1}{\,\cdot\,}{#1}}
\DeclarePairedDelimiterX{\seminorm}[1]{\lvert}{\rvert}{\ifblank{#1}{\,\cdot\,}{#1}}
\DeclarePairedDelimiterX{\abs}[1]{\lvert}{\rvert}{\ifblank{#1}{\,\cdot\,}{#1}}
\DeclarePairedDelimiterX{\innerproduct}[2]{\langle}{\rangle}{\ifblank{#1}{\,\cdot\,}{#1},\ifblank{#2}{\,\cdot\,}{#2}}
\DeclarePairedDelimiterX{\dualitypairing}[2]{\langle}{\rangle}{\ifblank{#1}{\,\cdot\,}{#1},\ifblank{#2}{\,\cdot\,}{#2}}
\DeclarePairedDelimiterXPP{\distance}[2]{\operatorname{dist}}{(}{)}{}{\ifblank{#1}{\,\cdot\,}{#1},\ifblank{#2}{\,\cdot\,}{#2}}

\providecommand\given{}
 \newcommand\SetSymbol[1][]{%
      \nonscript\: :
      \allowbreak
      \nonscript\:
      \mathopen{}}
   \DeclarePairedDelimiterX\Set[1]\{\}{%
      \renewcommand\given{\SetSymbol[\delimsize]}
      #1
   }




\makeatletter
\newcommand{\vast}{\bBigg@{3}}
\newcommand{\Vast}{\bBigg@{5}}
\makeatother

\renewcommand\leq\leqslant
\renewcommand\geq\geqslant

\newcommand{\smalloh}{o}

\DeclareRobustCommand{\C}[0]{\mathbb{C}}

\DeclareRobustCommand{\N}[0]{\mathbb{N}}

\DeclareRobustCommand{\R}[0]{\mathbb{R}}

\DeclareRobustCommand{\Z}[0]{\mathbb{Z}}

\DeclareRobustCommand{\calE}[0]{{\mathcal E}}

\DeclareRobustCommand{\calL}[0]{{\mathcal L}}

\DeclareRobustCommand{\calN}[0]{{\mathcal N}}
\DeclareRobustCommand{\calO}[0]{{\mathcal O}}

\DeclareRobustCommand{\calQ}[0]{{\mathcal Q}}

\DeclareRobustCommand{\calU}[0]{{\mathcal U}}

\DeclareRobustCommand{\scrF}[0]{{\mathscr F}}

\makeatletter
\let\save@mathaccent\mathaccent
\newcommand*\if@single[3]{%
  \setbox0\hbox{${\mathaccent"0362{#1}}^H$}%
  \setbox2\hbox{${\mathaccent"0362{\kern0pt#1}}^H$}%
  \ifdim\ht0=\ht2 #3\else #2\fi
  }
\newcommand*\rel@kern[1]{\kern#1\dimexpr\macc@kerna}
\newcommand*\widebar[1]{\@ifnextchar^{{\wide@bar{#1}{0}}}{\wide@bar{#1}{1}}}
\newcommand*\wide@bar[2]{\if@single{#1}{\wide@bar@{#1}{#2}{1}}{\wide@bar@{#1}{#2}{2}}}
\newcommand*\wide@bar@[3]{%
  \begingroup
  \def\mathaccent##1##2{%
    \let\mathaccent\save@mathaccent
    \if#32 \let\macc@nucleus\first@char \fi
    \setbox\z@\hbox{$\macc@style{\macc@nucleus}_{}$}%
    \setbox\tw@\hbox{$\macc@style{\macc@nucleus}{}_{}$}%
    \dimen@\wd\tw@
    \advance\dimen@-\wd\z@
    \divide\dimen@ 3
    \@tempdima\wd\tw@
    \advance\@tempdima-\scriptspace
    \divide\@tempdima 10
    \advance\dimen@-\@tempdima
    \ifdim\dimen@>\z@ \dimen@0pt\fi
    \rel@kern{0.6}\kern-\dimen@
    \if#31
      \overline{\rel@kern{-0.6}\kern\dimen@\macc@nucleus\rel@kern{0.4}\kern\dimen@}%
      \advance\dimen@0.4\dimexpr\macc@kerna
      \let\final@kern#2%
      \ifdim\dimen@<\z@ \let\final@kern1\fi
      \if\final@kern1 \kern-\dimen@\fi
    \else
      \overline{\rel@kern{-0.6}\kern\dimen@#1}%
    \fi
  }%
  \macc@depth\@ne
  \let\math@bgroup\@empty \let\math@egroup\macc@set@skewchar
  \mathsurround\z@ \frozen@everymath{\mathgroup\macc@group\relax}%
  \macc@set@skewchar\relax
  \let\mathaccentV\macc@nested@a
  \if#31
    \macc@nested@a\relax111{#1}%
  \else
    \def\gobble@till@marker##1\endmarker{}%
    \futurelet\first@char\gobble@till@marker#1\endmarker
    \ifcat\noexpand\first@char A\else
      \def\first@char{}%
    \fi
    \macc@nested@a\relax111{\first@char}%
  \fi
  \endgroup
}
\makeatother
\color{artcolortext} 

\pgfdeclareradialshading[fradialcolour]{fradial}{\pgfpoint{0}{0}}{%
  color(0)=(artcolorbg);%
  color(15bp)=(artcolorbg!80!artcolor-secnumberbg);%
  color(30bp)=(artcolor!10!artcolor-secnumberbg)%
}
\colorlet{fradialcolour}{artcolor-secnamebg}

\def\Vhrulefill{\leavevmode\leaders\hrule height 0.7ex depth \dimexpr0.4pt-0.7ex\hfill\kern0pt}

\begin{document}
\title{Solitary waves in dispersive evolution equations\\ of Whitham type with nonlinearities of mild regularity}
\titlerunning{Solitary waves in dispersive equations of Whitham type}
\author{\protect\phantom{\,\orcid{0000-0002-9905-1670}}Fredrik Hildrum\,\protect\orcid{0000-0002-9905-1670}} 
\authorrunning{Fredrik Hildrum}
\address{Department of Mathematical Sciences,\\ NTNU -- Norwegian University of Science and Technology,\\ 7491 Trondheim, Norway}
\email{fredrik.hildrum@ntnu.no}
\date{\today}
\renewcommand\footnotemark{}
\renewcommand\footnoterule{}
\maketitle

\begin{abstract}
We show existence of small solitary and periodic traveling-wave solutions in Sobolev spaces~\({\textnormal{H}^s}\), \({ s > 0 }\), to a class of nonlinear, dispersive evolution equations of the form
\begin{equation*}
u_t + \left(Lu+ n(u)\right)_x = 0,
\end{equation*}
where the dispersion~\({L}\) is a negative-order Fourier multiplier whose symbol is of KdV type at low frequencies and has integrable Fourier inverse~\({ K }\) and the nonlinearity~\({n}\) is inhomogeneous, locally Lipschitz and of superlinear growth at the origin. This generalises earlier work by Ehrnström, Groves~\& Wahlén on a class of equations which includes Whitham's model equation for surface gravity water waves featuring the exact linear dispersion relation. Tools involve constrained variational methods, Lions' concentration-compactness principle, a~strong fractional chain rule for composition operators of low relative regularity, and a~cut-off argument for~\({n}\) which enables us to go below the typical~\({s > \frac{1}{2}}\)~regime. We~also demonstrate that these solutions are either waves of elevation or waves of depression when \({ K }\) is nonnegative, and provide a nonexistence result when~\({ n }\) is too~strong.
\end{abstract}

\keywords{solitary waves; Whitham-type equations; nonlinear dispersive equations}
\subjclass[2010]{35A01; 35A15; 35Q35; 76B03; 76B15; 76B25}

\section{Introduction}

\subsection{Background}

Many model equations for one-dimensional spacial evolution of water waves~\autocite{Lannes2013} may be written~as
\begin{equation}\label{eq:evolution-equation}
u_t + (Lu + n(u))_x = 0,
\end{equation}
where~\({ L }\) is a dispersive Fourier multiplier operator in space and~\({ n }\) represents local nonlinear effects. Much effort has been put into answering whether~\eqref{eq:evolution-equation} admits traveling-wave solutions---and in particular, \emph{solitary~waves}. Propagating with fixed speed~\({\!\nu}\) and shape, these solutions take the form~\({ (t, x) \mapsto u(x - \nu t) }\) with \({ u(y) \to 0 }\) as~\({ \abs{ y } \to \infty }\), and satisfy
\begin{equation}\label{eq:traveling-wave}
Lu - \nu u + n(u) = 0
\end{equation}
after integrating~\eqref{eq:evolution-equation}.

\begin{wrapfigure}{r}{.38\textwidth}%
\centering%
\includegraphics{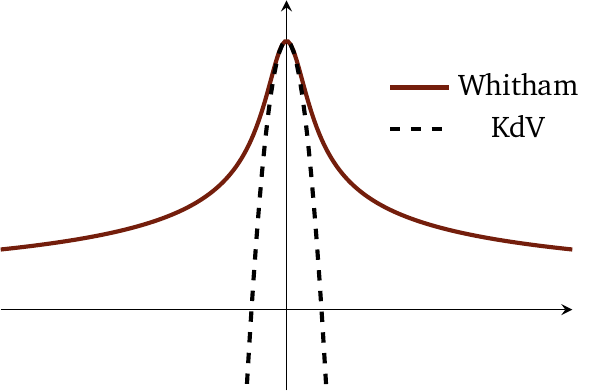}
\caption{Whitham and KdV symbols.}%
\label{fig:symbols}%
\vspace{10pt}%
\end{wrapfigure}
In 1967 Whitham~\autocite{Whitham1967,Whitham1974} proposed a shallow-water model of type~\eqref{eq:evolution-equation} with~\({ n(u) = u^2 }\) and
\begin{equation*}
\mathscr{F}( Lu) (\xi) = \sqrt{\frac{\tanh \xi}{\xi}} \; \widehat{  u  }(\xi)
\end{equation*}
as an alternative to the Korteweg--de~Vries (KdV) equation featuring the exact linear dispersion relation for unidirectional water waves influenced by gravity. As~seen from
\begin{equation*}
\mathfrak{m}(\xi) \coloneqq \sqrt{\frac{\tanh \xi}{\xi}} = \underbrace{1 - \tfrac{1}{6} \xi^2}_{ \textnormal{KdV symbol}} + \mathcal{O}\bigl(\xi^4\bigr)
\end{equation*}
and \cref{fig:symbols}, it is intuitively reasonable that Whitham's model should both perform better and on  a wider range of wave numbers than the KdV~equation.

Unfortunately, the nonlocal, singular nature of~\({ L }\)---due to~\({ \mathfrak{m}(\xi) \lesssim \langle \xi \rangle^{- \frac{1}{2}} }\) being inhomogeneous and decaying very slowly at infinity---seems to have prevented people from rigorously studying the Whitham equation until recently. Significant breakthrough in the last decade, however, has put the original Whitham equation, and also other full-dispersion models, in the spotlight, beginning with the existence of periodic traveling waves by Ehrnström and Kalisch~\autocite{EhrnstromKalisch2009} in~2009 and solitary-wave solutions by Ehrnström, Groves and Wahlén~\autocite{EGW2012} in~2012; see~also~\autocite{SteWri2018a}. Research has furthermore confirmed Whitham's conjectures for qualitative wave breaking (bounded wave profile with unbounded slope) in finite time~\autocite{Hur2017a} and the existence of highest, cusp-like solutions~\autocite{EhrWah2019a,EhrnstromKalisch2013}---now known to also have a convex profile between the stagnation points~\autocite{EncGomVer2018a}.

Additional analytical and numerical results for the Whitham equation include modulational instability of periodic waves~\autocite{Sanfordetal2014, HurJohnson2015a}, local well-posedness in Sobolev spaces~\({ \textnormal{H}^{ s} }\), \({ s > \frac{ 3 }{ 2 } }\), for both solitary and periodic initial data~\autocite{EhrnstromEscherPei2015,KleLinPilSau2018a,EhrPei2018a}, non-uniform continuity of the data-to-solution map~\autocite{Arn2019a}, symmetry and decay of traveling waves~\autocite{BruEhrPei2017a}, analysis of modeling properties, dynamics and identification of scaling regimes~\autocite{KleLinPilSau2018a}, and wave-channel experiments and other numerical studies~\autocite{BorlukKalischNicholls2013,TriKleClaOno2016a,KalMolVer2017a,Car2018a}. 

In~total, these investigations have demonstrated the potential usefulness of full-dispersion versions of traditional shallow-water models.

\subsection{Assumptions and main results}

In this paper we contribute to the longstanding mathematical program of fully understanding the interplay between dispersive and nonlinear effects for the formation of traveling waves. Specifically, we generalise~\autocite{EGW2012}, in which the authors proved the existence of small solitary and periodic traveling-wave solutions in the Sobolev space~\({ \textnormal{H}^1 }\) to a family of equations of the form~\eqref{eq:evolution-equation} with \enquote{Whitham-type} symbols---that is, negative-order, inhomogeneous symbols~\({ \mathfrak{m} }\) with KdV-type behaviour at low frequencies---and inhomogeneous nonlinearities~\({ n }\) being at least quadratic near the~origin. Under the following assumptions, we study the existence of solutions to~\eqref{eq:traveling-wave} in fractional Sobolev spaces both on the real line and in the periodic setting, noting that \({ \sigma = - \frac{1}{2} }\), \({ \ell = 1 }\) and~\({ q = 1 }\) for the original Whitham equation.

\clearpage 

\begin{enumerate}[leftmargin=*,label=\textcolor{artcolor}{\normalfont\({\textnormal{A}_{\arabic*}}\):},ref={\({\textnormal{A}_{\arabic*}}\)}]\itshape
\item \label{assumption:dispersion} \textcolor{artcolor}{\textit{Linear, nonlocal dispersive term.}\quad\Vhrulefill}
\begin{enumerate}[leftmargin=0pt,label=\textcolor{artcolor}{\normalfont\roman*)},ref={\roman*)},topsep=0pt]
\item
\({L}\) is a Fourier multiplier operator with even, inhomogeneous symbol~\({\mathfrak{m} \colon \R \to \R}\) of order~\({ \sigma < 0 }\), that~is,
\begin{equation*}
\widehat{Lu} = \mathfrak{m}\, \widehat{u} \qquad \text{and} \qquad \abs{\mathfrak{m}(\xi)} \lesssim \langle \xi \rangle^{\sigma},
\end{equation*}
where~\({\langle \xi \rangle \coloneqq \sqrt{1 + \xi^2}}\).
\item
\({\mathfrak{m} }\) is in the Wiener class~\({ \textnormal{W}_0 }\) of functions with absolutely integrable inverse Fourier transform, so~that~\({ L }\) is a convolution operator
\begin{equation*}
Lu = \tfrac{ 1 }{ \sqrt{2 \uppi} } K \ast u
\end{equation*}
with kernel~\({ K \coloneqq \mathscr{F}^{-1}(\mathfrak{m}) \in \textnormal{L}^1 }\).
\item \label{assumption:dispersion-expansion}
\({\mathfrak{m}}\) has a strictly positive unique global maximum at~\({0}\) and is \({ \textnormal{C}^{2\ell} }\)-regular around~\({ 0 }\) for some~\({\ell \in \mathbb{Z}_+}\), with~\({\mathfrak{m}^{(2\ell)}(0) < 0}\). Thus~\({ \mathfrak{m} }\) has the Maclaurin expansion
\begin{equation*}
\mathfrak{m}(\xi) = \mathfrak{m}(0) + \frac{\mathfrak{m}^{(2\ell)}(0)}{(2\ell)!} \xi^{2\ell} + \mathcal{O}\left( \abs{\xi}^{2\ell+ 2} \right).
\end{equation*}
\end{enumerate}
\item \label{assumption:nonlinearity} \textcolor{artcolor}{\textit{Nonlinearity.}\quad\Vhrulefill} \\
\({n \colon \R \to \R}\) is locally Lipschitz continuous (\({ n \in \textnormal{Lip}_{\textnormal{loc}} }\)) and of the form
\begin{equation*}
n(x) = n_q(x) +  n_\textnormal{r}(x),
\end{equation*}
where the leading-order term, with~\({q \in (0, 4\ell)}\), equals
\begin{equation*}
n_q(x) = \gamma |x|^{1 + q} \qquad \text{or} \qquad n_q(x) = \gamma x|x|^{q}
\end{equation*}
for a constant~\({\gamma \neq 0}\) or~\({\gamma > 0}\), respectively, and the remainder satisfies
\begin{equation*}
n_\textnormal{r}^{(j)}(x) = o \bigl( \abs{x}^{1 + q - j} \bigr)
\end{equation*}
as~\({x \to 0}\) for all~\({ j = 0, 1, \dotsc, \lfloor \varsigma \rfloor }\) if~\({ n \in  \textnormal{C}_{ \textnormal{loc}}^{  \varsigma}  }\) for some real~\({ \varsigma < 1 + q }\). In particular,
\begin{equation*}
n^{(j)}(x) = \calO \left(\abs{x}^{1 + q - j}\right) \qquad \text{for all } j = 0, \dotsc, \lfloor \varsigma \rfloor.
\end{equation*}
When~\({ n }\) is just in \({ \textnormal{Lip}_{\textnormal{loc}} }\), we assume that~\({ n'(x) = \mathcal{O}(\abs{ x }^q) }\) almost everywhere as~\({ x \to 0 }\).
\end{enumerate}

\begin{remark}
We write \({ A \lesssim B }\) or \({ B \gtrsim A }\) if~\({ A \leq c B }\) for some constant~\({ c > 0 }\) independent of~\({ A }\) and~\({ B }\), and~\({ A \eqsim B }\) symbolises that~\({ A \lesssim B \lesssim A }\).
\end{remark}

\enlargethispage{\baselineskip} 

In comparison to~\autocite{EGW2012} we consider more general symbols and nonlinearities. We allow for nonlinearities that are merely locally Lipschitz continuous and of superlinear growth (\({ q > 0 }\)) at the origin, down from~\({ n \in \textnormal{C}^2 }\) with at least quadratic growth (\({ q \geq 1 }\)) in~\autocite{EGW2012}. In order to allow~\({ q \in (0, 1) }\), we on the one hand make use of an order-optimal fractional chain rule; see~\eqref{eq:chain-rule-intro} and \cref{sec:nonlinearity}. On the other hand, we invoke, among other, the Gagliardo--Nirenberg inequality at a certain step, see \cref{sec:variational-method,sec:subadditivity}, which both improves upon and simplifies the corresponding estimates in~\autocite{EGW2012}. The upper bound \({ q < 4\ell }\), however, is the same in both articles, and we establish that this bound is, in fact, optimal for small solitary waves with sufficiently high speed. Notice also in Assumption~\ref{assumption:nonlinearity} that there is some decoupling of the regularity and the growth of~\({ n }\) in the sense that~\({ \varsigma < 1 + q }\).

As regards the dispersive term, the KdV-type behaviour of~\({ \mathfrak{m} }\) at low frequencies in Assumption~\ref{assumption:dispersion}~\ref{assumption:dispersion-expansion} coincides with that of~\autocite{EGW2012}. When it comes to global regularity and decay, the authors of~\autocite{EGW2012} assumed negative-order symbols~\({ \mathfrak{m}\in \textnormal{S}_{\infty}^{\sigma} }\), that~is, \({ \mathfrak{m} \in \textnormal{C}^\infty }\) and~\({ \abs{ \mathfrak{m}^{(j)}(\xi) } \lesssim \langle \xi \rangle^{\sigma - j} }\) for all~\({ j \in \N_0 }\). This not only implies that~\({ \mathfrak{m} \in \textnormal{W}_0 }\), but also that the kernel~\({ K }\) is essentially very localised, which was used in~\autocite{EGW2012} to control the nonlocal estimates. As an improvement, we show that all of these estimates, in fact, follow from general properties of convolution with an \({ \textnormal{L}^1 }\)~kernel, together with decay on~\({ \mathfrak{m} }\) itself---omitting any assumptions on its derivatives; see \cref{sec:variational-method,sec:action-L,sec:special-min-seq,sec:concentration-compactness} for more details. For convenience, we include in~\Cref{app:wienerclass-sufficientconditions} a list of recent and practical sufficient conditions for symbols to be in~\({ \textnormal{W}_0 }\).

Under Assumptions~\ref{assumption:dispersion} and~\ref{assumption:nonlinearity}, we study~\eqref{eq:traveling-wave} in the Sobolev space~\({ \textnormal{H}^s }\) on the real line and in the corresponding \({ P }\)-periodic analogue~\({ \textnormal{H}_P^s }\) in the periodic setting (see~\cref{sec:preliminaries-spaces} for definitions) for~\({ s > 0 }\) satisfying
\begin{equation} \label{eq:sobolev-index}
\tfrac{ 1 }{ 2 } - \abs{ \sigma } < s < \varsigma, \qquad \text{with } \varsigma < 1 + q,
\end{equation}
and obtain the following main results.
\begin{theorem}[Periodic traveling waves] \label{thm:existence-periodic}
For each sufficiently small~\({ \mu > 0 }\) there exists a period~\({ P_\mu > 0 }\), such that for all~\({ P \geq P_\mu }\) equation~\eqref{eq:traveling-wave} admits a nonconstant solution \({ u \in \textnormal{H}_P^s \cap \textnormal{L}^{ \infty } }\) with~\({ \norm{ u }_{ \textnormal{L}_P^2 }^{ 2 } = 2 \mu }\) and supercritical wave speed~\({\!\nu_P > \mathfrak{m}(0) }\). Uniformly over~\({ P \geq P_\mu }\) these solutions satisfy
\begin{gather*}
\nu_P - \mathfrak{m}(0) \eqsim \mu^{q \alpha} \eqsim \norm{ u }_{ \infty }^{ q } , \\
\shortintertext{where~\({ \alpha \coloneqq \frac{2 \ell}{4 \ell - q} > \frac{1}{2}}\), and}
 \norm{ u }_{ \textnormal{H}_P^s } \eqsim \mu^{ \frac{1}{2}}.
 \end{gather*}
\end{theorem}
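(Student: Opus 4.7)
\emph{Variational setup.} I~plan to obtain $u_P$ as a minimizer of a constrained variational problem. With $N(x) := \int_0^x n(t)\,dt$, set
$$
\mathcal{L}(u) := \tfrac{1}{2}\int_{-P/2}^{P/2} u\,Lu\,dx, \qquad \mathcal{N}(u) := \int_{-P/2}^{P/2} N(u)\,dx, \qquad \mathcal{E}(u) := -\mathcal{L}(u) - \mathcal{N}(u),
$$
and consider
$$
I_\mu^P := \inf\bigl\{\,\mathcal{E}(u) : u \in U_\mu^P\,\bigr\}, \qquad U_\mu^P := \bigl\{\,u \in \textnormal{H}_P^s : \tfrac{1}{2}\|u\|_{\textnormal{L}_P^2}^2 = \mu\,\bigr\}.
$$
A~minimizer $u_P$ satisfies an Euler--Lagrange equation of the form $Lu_P - \nu_P u_P + n(u_P) = 0$, with the Lagrange multiplier giving the wave speed $\nu_P$; the sign options in Assumption~\ref{assumption:nonlinearity} together with the $u\mapsto -u$ symmetry of $\mathcal{E}$ when $n_q(x)=\gamma|x|^{1+q}$ let me always arrange a one-signed wave.

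\emph{Test function and compactness.} The first key step is a KdV-scaled trial function. Pick $\varphi\in\textnormal{C}_c^\infty(\R)$ with $\|\varphi\|_{\textnormal{L}^2}^2=2$, one-signed if $\gamma>0$, and set $u_\lambda(x):=\mu^{1/2}\lambda^{1/2}\varphi(\lambda x)$, extended $P$-periodically once $P\lambda\geq\operatorname{diam}(\support\varphi)$. Then $\tfrac{1}{2}\|u_\lambda\|_{\textnormal{L}_P^2}^2=\mu$ and, plugging Assumption~\ref{assumption:dispersion}~\ref{assumption:dispersion-expansion} into the Plancherel representation of $\mathcal{L}(u_\lambda)$ while using the leading term of $n$,
$$
\mathcal{E}(u_\lambda) = -\mathfrak{m}(0)\mu + C_1\mu\lambda^{2\ell} - C_2\mu^{1+q/2}\lambda^{q/2} + \text{(l.o.t.)},
$$
with $C_1,C_2>0$. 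Balancing forces the optimal $\lambda\eqsim\mu^{q/(4\ell-q)}$ and yields the strict bound $I_\mu^P\leq -\mathfrak{m}(0)\mu - c_0\mu^{1+q\alpha}$; this simultaneously fixes the period threshold $P_\mu\eqsim\mu^{-q/(4\ell-q)}$. In the periodic setting $\textnormal{H}_P^s\hookrightarrow\textnormal{L}_P^r$ is compact for every finite $r$, so \emph{no concentration-compactness} is needed: a Gagliardo--Nirenberg interpolation $\int|u|^{2+q}\,dx\lesssim\|u\|_{\textnormal{L}_P^2}^{2+q-\theta}\|u\|_{\textnormal{H}_P^s}^{\theta}$ together with the elementary bound $|\mathcal{L}(u)|\lesssim\|u\|_{\textnormal{L}_P^2}^2$ and with the negative-definite $\xi^{2\ell}$-correction of $\mathfrak{m}$ at low frequencies delivers boundedness of minimizing sequences in $\textnormal{H}_P^s$. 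Weak lower semicontinuity of $-\mathcal{L}$ (from $\mathfrak{m}\leq\mathfrak{m}(0)$) and strong $\textnormal{L}_P^{2+q}$-continuity of $\mathcal{N}$ (from the local Lipschitz property of $n$) yield a minimizer $u_P$; nonconstancy is automatic, since constants $\pm\sqrt{2\mu/P}$ give $\mathcal{E} = -\mathfrak{m}(0)\mu+\mathcal{O}(P^{-q/2})$, which for $P\geq P_\mu$ exceeds the above strict upper bound.

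\emph{Asymptotics and main obstacle.} Testing the Euler--Lagrange equation against $u_P$ and coupling the resulting energy identity with a matching lower bound on $I_\mu^P$ (again via Gagliardo--Nirenberg) delivers $\nu_P-\mathfrak{m}(0)\eqsim\|u_P\|_\infty^q\eqsim\mu^{q\alpha}$ and $\|u_P\|_{\textnormal{H}_P^s}^2\eqsim\mu$; membership $u_P\in\textnormal{L}^\infty$, and hence the supercriticality $\nu_P>\mathfrak{m}(0)$, follows by bootstrap from $\nu_P u_P = Lu_P + n(u_P)$ using $L\colon\textnormal{L}^\infty\to\textnormal{L}^\infty$ (afforded by $K\in\textnormal{L}^1$) and the local Lipschitz nature of $n$. \textbf{The principal obstacle} is that the range \eqref{eq:sobolev-index} permits $s<\tfrac{1}{2}$ (allowed because $\sigma<0$), in which regime $\textnormal{H}^s\not\hookrightarrow\textnormal{L}^\infty$ so that the composition operator $u\mapsto n(u)$ need not be classically $\textnormal{C}^1$ on $\textnormal{H}_P^s$: the continuity of $\mathcal{N}$, the Gagliardo--Nirenberg-based bounds, and the very derivation of the Euler--Lagrange equation all rest on the strong fractional chain rule of low relative regularity together with the cut-off procedure for $n$ signposted in the abstract and developed in \cref{sec:nonlinearity}, which are what permit the decoupling $\varsigma<1+q$ in Assumption~\ref{assumption:nonlinearity}.
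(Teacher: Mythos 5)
The central gap is the compactness step. You minimize $\mathcal{E}$ over the $L^2$-sphere $\{u : \tfrac{1}{2}\|u\|_{\textnormal{L}_P^2}^2 = \mu\}$ alone and claim boundedness of minimizing sequences in $\textnormal{H}_P^s$ via Gagliardo--Nirenberg and ``the negative-definite $\xi^{2\ell}$-correction of $\mathfrak{m}$''. But $L$ is of \emph{negative} order: $\mathfrak{m}(\xi)\to 0$ as $|\xi|\to\infty$, so by Plancherel $|\mathcal{L}(u)| \lesssim \|u\|_{\textnormal{L}_P^2}^2 = 2\mu$ and the quadratic form $-\mathcal{L}$ gives no control on $\|u\|_{\textnormal{H}_P^s}$ for any $s>0$. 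The $\xi^{2\ell}$-expansion of $\mathfrak{m}$ is only a local Taylor remainder near $\xi=0$, not a globally coercive term (contrast KdV, where $\int u(1-\partial_x^2)u = \|u\|_1^2$). Thus $\mathcal{E}_P$ is genuinely noncoercive on the $L^2$-sphere; with uncut $n$ the infimum is in fact $-\infty$ (take bumps of fixed $L^2$ mass and diverging amplitude), and even after the cut-off $\widetilde{n}$ nothing prevents minimizing sequences from escaping to infinity in $\textnormal{H}_P^s$. This is exactly why the paper minimizes over the \emph{constrained ball} $U_{P,\mu}^s = \{u : \|u\|_{\textnormal{H}_P^s} < R,\ \mathcal{Q}_P(u)=\mu\}$ and, since a minimizer might a~priori sit on the boundary $\|u\|_{\textnormal{H}_P^s}=R$ where Lagrange's principle fails, introduces the penaliser $\varrho(\|u\|_{\textnormal{H}_P^s}^2)$; only \emph{after} minimizing $\mathcal{E}_{P,\varrho}$ do a~posteriori estimates (\Cref{thm:bound-HsP-norm-mu}) show $\|u_P^\star\|_{\textnormal{H}_P^s}\eqsim\mu^{1/2}\ll R$, so the minimizer is interior and the penaliser inactive. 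Your proposal omits both the ball constraint and the penaliser, and the existence argument does not close.

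A secondary gap: the sharp asymptotics $\nu_P - \mathfrak{m}(0) \eqsim \mu^{q\alpha} \eqsim \|u\|_\infty^q$ do not follow from testing the Euler--Lagrange equation plus a Gagliardo--Nirenberg lower bound on $I_\mu^P$. That argument yields only the cruder bound $\nu_P - \mathfrak{m}(0) \lesssim \mu^{q/2}$ (cf.~\eqref{eq:wavespeed-est-from-equation}). The exponent $q\alpha$ with $\alpha = 2\ell/(4\ell-q)$ is obtained in \cref{sec:subadditivity} by splitting $u$ into low and high frequency parts $u_{\textnormal{lo}}, u_{\textnormal{hi}}$, inverting $\nu-L$ on $u_{\textnormal{hi}}$, and applying Gagliardo--Nirenberg to $\|u_{\textnormal{lo}}^{(2\ell)}\|_0$ (not to $u$ itself) as in the proof of \Cref{thm:bound-Hs-norm-mu}; moreover the periodic infimum bound $I_{P,\mu} < -\mathfrak{m}(0)\mu - I_\star\mu^{1+q\alpha}$ driving those estimates is itself obtained through the comparison $I_{P,\mu}\to I_\mu$ of \Cref{thm:convergence-periodic-solitary-infimum}, so the periodic theorem cannot be closed by \cref{sec:periodic} alone.
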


\begin{theorem}[Solitary waves] \label{thm:existence-solitary}
For each sufficiently small~\({ \mu > 0 }\) there exists a solution~\({ u \in \textnormal{H}^s \cap \textnormal{L}^{ \infty} }\) to~\eqref{eq:traveling-wave} with supercritical speed~\({\!\nu > \mathfrak{m}(0) }\) and~\({ \norm{ u }_{ 0 }^{ 2 } = 2 \mu }\) satisfying
\begin{gather*}
\nu - \mathfrak{m}(0) \eqsim \mu^{q \alpha} \eqsim \norm{ u }_{ \infty }^{ q },  \\
\shortintertext{where~\({ \alpha}\) is as in~\Cref{thm:existence-periodic}, and}
\norm{ u }_{ s } \eqsim \mu^{ \frac{1}{2}}.
\end{gather*}
\end{theorem}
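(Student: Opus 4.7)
The plan is to mirror the variational framework from \Cref{thm:existence-periodic}, now posed directly on the real line, and to use Lions' concentration-compactness principle to overcome the loss of compactness on~$\R$. Introduce the energy
\begin{equation*}
\calE(u) \coloneqq -\tfrac{1}{2} \dualitypairing{Lu}{u} - \int_\R N(u) \dee x, \qquad N(x) \coloneqq \int_0^x n(t) \dee t,
\end{equation*}
and consider the constrained minimisation problem
\begin{equation*}
I_\mu \coloneqq \inf \Set{ \calE(u) \given u \in \textnormal{H}^s,\; \norm{u}_0^2 = 2\mu }.
\end{equation*}
Any minimiser~$u$ solves a Lagrange-multiplier equation which, after rearrangement, is precisely~\eqref{eq:traveling-wave}, the multiplier emerging as the wave speed~$\nu$.

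First I would verify well-definedness of~$I_\mu$. Combining Assumption~\ref{assumption:nonlinearity} with the fractional chain rule and the Gagliardo--Nirenberg inequality mentioned in the introduction, the nonlinear part~$\int_\R N(u)\dee x$ is controlled by a fractional power of~$\norm{u}_0^2$ times a small portion of the dispersive quadratic form~$-\dualitypairing{Lu}{u} + \mathfrak{m}(0)\norm{u}_0^2$, which itself is comparable to~$\norm{u}_s^2 - \norm{u}_0^2$ via the low-frequency expansion of Assumption~\ref{assumption:dispersion}~\ref{assumption:dispersion-expansion}. This gives $I_\mu > -\infty$ and uniform $\textnormal{H}^s$-bounds for minimising sequences; the cut-off argument for~$n$ is what makes the estimate work in the full range~\eqref{eq:sobolev-index}, including $s \leq \tfrac{1}{2}$.

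The main step is to extract a minimising subsequence that converges up to translation. By testing~$\calE$ on a rescaled KdV-type ansatz and expanding via Assumption~\ref{assumption:dispersion}~\ref{assumption:dispersion-expansion}, one obtains strict negativity with the sharp scaling $I_\mu \eqsim -\mu^{1 + q\alpha}$, where $\alpha = \frac{2\ell}{4\ell - q} > \tfrac{1}{2}$; the condition $q < 4\ell$ from Assumption~\ref{assumption:nonlinearity} is precisely what makes this ansatz energetically admissible. Applied to a minimising sequence, Lions' principle leaves three alternatives. Vanishing is excluded since it forces $\int_\R N(u_n)\dee x \to 0$ and hence $\limsup \calE(u_n) \geq 0 > I_\mu$. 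Dichotomy is excluded via the \emph{strict subadditivity}
\begin{equation*}
I_\mu < I_{\mu_1} + I_{\mu - \mu_1}, \qquad 0 < \mu_1 < \mu,
\end{equation*}
which follows from the near-homogeneity of the leading $n_q$-contribution together with the subdominance of~$n_\textnormal{r}$ given by the $\smalloh$-condition in Assumption~\ref{assumption:nonlinearity}. Only concentration remains, producing after a suitable translation a weakly convergent subsequence whose limit $u \in \textnormal{H}^s$ realises~$I_\mu$ and has $\norm{u}_0^2 = 2\mu$; lower semicontinuity of both parts of~$\calE$ together with weak continuity of the nonlinear term (which relies on the $\textnormal{L}^1$-localisation of~$K$ from Assumption~\ref{assumption:dispersion}) secures this.

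Finally, the stated scalings follow by matching the optimal ansatz to the minimiser: the identity $\calE(u) = I_\mu$ together with the Euler--Lagrange equation forces $\nu - \mathfrak{m}(0) \eqsim \mu^{q\alpha}$ and $\norm{u}_s \eqsim \mu^{1/2}$. The $\textnormal{L}^\infty$-bound is immediate from Sobolev embedding when $s > \tfrac{1}{2}$, and otherwise follows by a bootstrap based on the convolution representation $u = (\nu - L)^{-1} n(u)$ using $K \in \textnormal{L}^1$. The principal obstacle will be the strict subadditivity step: with merely locally Lipschitz~$n$ and an inhomogeneous lower-order remainder, one cannot rely on simple algebraic identities, and it is here that the fractional chain rule of low relative regularity and careful exploitation of the $\smalloh$-type remainder~$n_\textnormal{r}$ become indispensable.
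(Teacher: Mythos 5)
Your high-level roadmap — constrained minimisation, cut-off for the nonlinearity, Lions' concentration-compactness, exclusion of vanishing by a Gagliardo--Nirenberg estimate, exclusion of dichotomy by strict subadditivity, and the final scaling matching to get~\({\nu - \mathfrak{m}(0) \eqsim \mu^{q\alpha}}\) — is consistent with the paper. But there is a genuine gap at the very first step, and it propagates through the rest of the argument.

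You assert ``uniform \({\textnormal{H}^s}\)-bounds for minimising sequences'' for the minimisation of~\({\mathcal{E}}\) over the \emph{full} set~\(\Set{u \in \textnormal{H}^s : \norm{u}_0^2 = 2\mu}\). This does not follow from anything you write. The functional~\({\mathcal{E}}\) is not coercive: with the cut-off~\({\widetilde{n}}\) both~\({\mathcal{L}}\) and~\({\mathcal{N}}\) are bounded on the \({\textnormal{L}^2}\)-sphere, so~\({I_\mu}\) is finite, but a minimising sequence may still have~\({\norm{u_k}_s \to \infty}\), and then concentration-compactness has nothing to act on. (Without the cut-off, a mass-preserving dilation~\({u \mapsto \lambda^{1/2}u(\lambda\cdot)}\) drives~\({\mathcal{E} \to -\infty}\), so the unconstrained infimum is not even finite.) The paper avoids this by minimising only over the constrained ball~\({U_\mu^s = \Set{u : \norm{u}_s < R,\ \mathcal{Q}(u) = \mu}}\), and the central technical contribution is precisely to \emph{produce} a minimising sequence that stays uniformly away from the boundary~\({\norm{\cdot}_s = R}\). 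This is not done on the line directly: the penalisation trick used in \Cref{thm:penalized-existence} relies on the compact embedding~\({\textnormal{H}_P^s \hookrightarrow \textnormal{L}_P^2}\) (and the resulting weak continuity of~\({\mathcal{N}_P}\)), which fails on~\({\R}\). Instead, the paper first solves the \emph{periodic} penalised problem, proves the a~priori bounds~\({\norm{u_P^\star}_{\textnormal{H}_P^s} \eqsim \mu^{1/2}}\) and~\({\norm{u_P^\star}_\infty \lesssim \mu^\alpha}\) uniformly in~\({P}\), and then builds the ``special minimising sequence'' by truncating, translating, rescaling, and letting~\({P \to \infty}\); the \({\textnormal{L}^1}\)-localisation of~\({K}\) (\Cref{thm:convolution-approximation-periodic-solitary}) is what makes this transfer work. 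Your proposal bypasses the periodic problem entirely, and so has no source for the bound~\({\norm{u_k}_s \lesssim \mu^{1/2}}\) or the near-Euler--Lagrange condition~\eqref{eq:special-near-min}.

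This gap is not cosmetic: the strict subadditivity you invoke to exclude dichotomy requires the a~priori bound~\({\norm{u_k}_\infty \lesssim \mu^\alpha}\) (\Cref{thm:bound-Hs-norm-mu}) in order to certify that~\({\mathcal{N}_\textnormal{r}(u_k) = o(\mu^{1+q\alpha})}\). That bound in turn is extracted from the Euler--Lagrange relation~\eqref{eq:special-identity} by a low-/high-frequency splitting and the Gagliardo--Nirenberg inequality applied to the low-frequency part — an argument that needs the near-minimisers to \emph{approximately satisfy the equation} in~\({\textnormal{H}^s \cap \textnormal{L}^\infty}\), not merely to be energy-minimising. A general minimising sequence gives you none of this. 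Likewise, your claim that the nonlinear term is ``weakly continuous... relying on the \({\textnormal{L}^1}\)-localisation of~\({K}\)'' conflates two unrelated facts: \({\mathcal{N}}\) is weakly continuous only through compact embeddings (i.e.~in the periodic setting), while the integrable kernel is used to control the \emph{nonlocal} interaction in the convergence of the periodic to the solitary problem and in the dichotomy exclusion (\Cref{thm:convolution-dichotomy}). In short, the architecture of passing through the periodic problem is not an optional organisational choice — it is what supplies the compactness and the a~priori estimates that your proposal treats as given.
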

\begin{remark}
\Cref{thm:existence-periodic,thm:existence-solitary} also hold
\begin{enumerate}[itemsep=0pt,topsep=4pt]
\item
with no upper bound on~\({ s }\) if~\({ n }\) is a polynomial with least-order term of~order~\({ 1 + q \in \Z_+ }\);
\item
for \({ s = 1 }\) when~\({ n }\) is just Lipschitz or \({ \textnormal{C}^1 }\) around the origin.
\end{enumerate}
Even if~\({ n'(x) = \mathcal{O}(\abs{ x }^q) }\) a.e.\@ as \({ x \to 0 }\) does not hold in the \({ \textnormal{Lip}_{\textnormal{loc}} }\) case, we still obtain solutions \({u \in \textnormal{H}_P^s \cap \textnormal{L}^{\infty}}\) satisfying, uniformly over \({ P \geq P_\mu }\), the estimates
\begin{equation*}
 \nu_P - \mathfrak{m}(0) \eqsim \mu^{q /2} \qquad \text{and} \qquad \norm{ u }_{ \textnormal{H}_P^s } \eqsim \mu^{ \frac{ 1 }{ 2 }} \eqsim \norm{ u }_{ \infty }.
\end{equation*}
\end{remark}

The \({ \mu }\)-dependent estimates on the wave speed and~\({ \norm{ u }_{ \infty } }\) in \Cref{thm:existence-periodic,thm:existence-solitary} involve the parameter~\({ \alpha }\), which represents a balance between dispersive and nonlinear effects. Since~\({ \alpha = \infty }\) when~\({ q = 4\ell }\), one might expect that there are no nontrivial small solutions of~\eqref{eq:traveling-wave} with speeds close to~\({ \mathfrak{m}(0) }\) if~\({ q \geq 4\ell }\). This is indeed the case in the solitary-wave setting, and is included in \Cref{thm:nonexistence}. 

We also demonstrate in \Cref{thm:wave-sign} that bounded solutions of~\eqref{eq:traveling-wave} with supercritical speed are either waves of elevation or waves of depression in the special case when~\({ K }\) is nonnegative, noting that this result is already known for the Whitham equation~\autocite[Corollary~4.4]{EhrWah2019a}.

In working in fractional Sobolev spaces, both low- and high-order~\({ s }\) come with technical difficulties. As in~\autocite{EGW2012}, we shall treat solutions of~\eqref{eq:traveling-wave} as minimisers of a constrained variational problem, explained in details in~\cref{sec:variational-method}. When~\({ s \leq \frac{1}{2} }\), neither~\({ \textnormal{H}^s }\) nor~\({ \textnormal{H}_P^s }\) are embedded in~\({ \textnormal{L}^\infty }\), which unfortunately means that the minimisation problem is unbounded---even locally. We resolve this issue by a cut-off argument for~\({ n }\) together with the lower bound~\({ s > \frac{ 1 }{ 2 } - \abs{ \sigma } }\) in~\eqref{eq:sobolev-index}. This implies that both~\({ n(u) }\) and~\({ Lu }\) are in~\({ \textnormal{L}^\infty }\), and we have therefore essentially regained~\({ \textnormal{L}^\infty }\) control of~\eqref{eq:traveling-wave}.

Furthermore, we rely on the highly precise fractional chain rule
\begin{equation} \label{eq:chain-rule-intro}
\norm{ n(u) }_{ s } \lesssim \norm{ u }_{ \infty }^q \norm{ u }_{ s }
\end{equation}
on~\({ \textnormal{H}^s \cap \textnormal{L}^\infty }\) by Runst and Sickel~\autocite[Theorem~5.3.4/1~(i)]{RunstSickel1996}, which allows~\({ s }\) to be \emph{arbitrarily} close to~\({ \varsigma }\), and does not seem to be well known. Apart from the immediate case~\({ s \leq 1 }\), an elementary but tedious calculation using the classical higher-order chain rule (Faà di~Bruno's formula) establishes~\eqref{eq:chain-rule-intro} provided~\({ u^{( \lfloor \varsigma \rfloor)} \in \textnormal{L}^\infty }\), that is, when~\({ s > \lfloor \varsigma \rfloor + \frac{ 1 }{ 2 }  }\). The general (high-order) result in~\autocite{RunstSickel1996}, however, is based on technical harmonic analysis.

\subsection{Outline of the variational method} \label{sec:variational-method}

We follow the variational approach in~\autocite{GroWah2011a,EGW2012}, treating solitary-wave solutions as local minimisers of the functional
\begin{equation*}
\mathcal{E}(u) \coloneqq \underbrace{-\frac{1}{2} \int_{\R} u Lu \dee x}_{\displaystyle\eqqcolon \mathcal{L}(u)} \underbrace{- \int_{\R} N(u) \dee x}_{\displaystyle\eqqcolon \mathcal{N}(u)},
\end{equation*}
subject to the constraint that~\({ \mathcal{Q}(u) \coloneqq \frac{1}{2} \int_{\R} u^2 \dee x }\) is held fixed, where
\begin{equation*}
N(x) \coloneqq N_q(x) + N_\textnormal{r}(x), \qquad N_q(x) \coloneqq \frac{x n_q(x)}{2 + q} \qquad \text{and} \qquad  N_\textnormal{r}(x) \coloneqq \int_{0}^{x} n_\textnormal{r}(s) \dee s
\end{equation*}
are primitives of~\({n}\),~\({n_q}\) and~\({n_\textnormal{r}}\) vanishing at~\({0}\). By Lagrange's multiplier principle, any such minimiser~\({ u }\) satisfies
\begin{equation} \label{eq:variational-lagrange-principle}
\mathcal{E}'(u) + \nu \mathcal{Q}'(u) = 0
\end{equation}
for some multiplier~\({\!\nu \in \R}\), which implies that~\({u}\) solves~\eqref{eq:traveling-wave} with wave speed~\({\!\nu}\). Here primes mean representatives of Fréchet derivatives in~\({ \textnormal{L}^2 }\); see~\cref{sec:functionals}.

Specifically, we minimise~\({ \mathcal{E} }\) over a \enquote{constrained ball}
\begin{equation*}
U_\mu^s \coloneqq  \Set*{ u \in \textnormal{H}^s \given \norm{ u }_{ s } < R \text{ and } \mathcal{Q}(u) = \mu }\end{equation*}
for small~\({ \mu, R > 0 }\), and show in~\cref{sec:concentration-compactness} that any minimising sequence which stays away from the \enquote{boundary} \({ \norm{  }_{ s } = R }\) converges---up to subsequences and translations---in~\({ \textnormal{H}^{s-} }\) to a nontrivial solution of~\eqref{eq:traveling-wave} in~\({ \textnormal{H}^s }\) with help of Lions' concentration-compactness principle~\autocite{Lio1984a} adapted to the fractional setting~\autocite[Corollary~3.2]{ParSal2019a}.

\enlargethispage{\baselineskip} 

One must of course confirm the existence of such a minimising sequence. Here the periodic traveling waves come into play. In~\cref{sec:periodic} we consider the corresponding variational problem for \({P}\)-periodic traveling waves with functionals~\({\mathcal{E}_P}\),~\({\mathcal{L}_P}\),~\({\mathcal{N}_P}\) and~\({\mathcal{Q}_P}\), where the domain of integration now is~\({\bigl(-\tfrac{P}{2}, \tfrac{P}{2}\bigr)}\). Both constructively and due to lack of coercivity, we penalise~\({ \mathcal{E}_P }\) so that minimising sequences do not come close to the \enquote{boundary} in~\({ \textnormal{H}_P^s }\). The (generalised) extreme value theorem yields solutions to the penalised problem, and \textit{a~priori} estimates show that the minimisers are unaffected by the penalisation. This establishes most of~\Cref{thm:existence-periodic}, with \emph{uniform} estimates in large~\({ P }\).

We next essentially show that
\begin{equation*}
\left\{\begin{aligned}
\text{the \({ P }\)-periodic traveling-wave problem}\hspace*{1em} \\
\text{scaled, truncated and translated to~\({ \bigl( - \tfrac{ P }{ 2 }, \tfrac{ P }{ 2 } \bigr) }\)}
\end{aligned}\right\}
\quad \xrightarrow[P \to \infty]{} \quad \text{the solitary-wave problem,}
\end{equation*}
and construct a \enquote{boundary-distant} special minimising sequence for the latter with help of the periodic minimisers. Our approach simplifies and extends~\autocite[lemma~3.3 and~theorem~3.8]{EGW2012} in that we only use that~\({ L }\) is a convolution operator with integrable kernel~\({ K }\) in order to deal with the nonlocal effects. In particular, we
neither need to assume algebraic-type decay of~\({ Lu }\) outside~\({ \bigl( - \frac{P}{2}, \frac{P}{2} \bigr)  }\) for~\({ u \in \textnormal{L}^2 }\) supported in~\({ \bigl( - \frac{P}{2}, \frac{P}{2} \bigr)  }\) (see~\autocite[proposition~2.1~(ii)]{EGW2012}), nor that~\({ L }\) commutes with \enquote{the periodisation map}~\autocite[proposition~2.5]{EGW2012}, although we note that this property remains true in our case. As a byproduct, we can also be less restrictive in the truncation process, as long as we have asymptotic control when~\({ P \to \infty }\).

This special minimising sequence, \({ \Set{ \widetilde{u}_k }_k }\), also guarantees that the quantity
\begin{equation*}
I_\mu \coloneqq \inf \Set*{ \mathcal{E}(u) \given u \in U_\mu^s }
\end{equation*}
is \emph{strictly subadditive}, meaning that
\begin{equation} \label{eq:strictly-subadditive}
I_{ \mu_1 + \mu_2} < I_{ \mu_1} + I_{ \mu_2} \qquad \text{whenever } 0 < \mu_1, \mu_2 < \mu_1 + \mu_2 < \mu_\star
\end{equation}
for some~\({ \mu_\star > 0 }\), and is proved in~\cref{sec:subadditivity}. For inhomogeneous~\({ n }\), this relies upon \textit{a~priori} estimates for the size and wave speed of~\({ \widetilde{u}_k }\). Whereas~\autocite{EGW2012} decomposes~\({ \widetilde{u}_k }\) into low- and high-frequency components using sharp frequency cut-offs, we instead apply a smooth decomposition. This seems to be necessary for the estimates to work when~\({ s \leq \frac{ 1 }{ 2 } }\) in order to guarantee that the~\({ \textnormal{L}^\infty }\) norm of the high-frequency component is almost bounded by its \({ \textnormal{H}^s }\)~norm. Furthermore, in order to conclude the \textit{a~priori} estimates, the approach in~\autocite{EGW2012} introduces some scaled Sobolev norms with weights depending on~\({ \mu }\). The arguments~\autocite[proof of Theorem~4.4]{EGW2012} seem to require~\({ q \geq 1 }\), but with help of the Gagliardo--Nirenberg inequality, we found that~\({ q > 0 }\) is possible; see specifically the proof of~\Cref{thm:bound-Hs-norm-mu}.

Strict subadditivity also excludes the unwanted case of dichotomy in Lions'~principle, where we again improve upon~\autocite{EGW2012} by only taking into account that \({ L }\) is a convolution operator. Finally, \textit{a~priori} estimates for the size and speed of traveling waves then complete the proof of~\Cref{thm:existence-periodic,thm:existence-solitary}.

\section{Functional-analytic preliminaries} \label{sec:preliminaries}%

\subsection{Spaces} \label{sec:preliminaries-spaces}

Let
\begin{equation*}
\widehat{\varphi}(\xi) \coloneqq \mathscr{F}(\varphi)(\xi) \coloneqq \frac{1}{\sqrt{2 \uppi}} \int_{\R} \varphi(x) \, \textnormal{e}^{-\textnormal{i}\xi x} \dee x
\end{equation*}
denote the unitary Fourier transform defined initially on the Schwartz space~\({\mathscr{S}}\) and extended by duality to tempered distributions~\({\mathscr{S}'}\). Define~\({\textnormal{L}^q}\), for~\({q \geq 1}\), to be the space of real-valued functions on~\({\R}\) whose norm~\({\norm{u}_{\textnormal{L}^q} \coloneqq \bigl( \int_{\R} \abs{u}^q \dee x \bigr)^{1/q}}\) is finite, with~\({\norm{u}_{\infty} \coloneqq \operatorname{(ess)}\sup_{x \in \R} \abs{u(x)}}\) in the (essentially) bounded~\({\textnormal{L}^\infty}\) case. Plancherel's theorem shows that~\({\mathscr{F}}\)~is an isometric isomorphism between \({\textnormal{L}^2}\) and~\({ \Set{ \widehat{  u  } \in \textnormal{L}^2(\R \to \C) \given \widehat{  u  }(- \xi) = \overline{ \widehat{  u  }( \xi)} } }\). Next define~\({\textnormal{H}^s}\), for any~\({s \geq 0}\), to be the fractional Sobolev space of functions in~\({\textnormal{L}^2}\) with finite norm~\({\norm{u}_{s} \coloneqq \norm*{\langle \cdot \rangle^s \widehat{u}}_{\textnormal{L}^2(\R \to \C)}}\) and inner product \({\innerproduct{u}{v}_s \coloneqq \int_{\R} \langle \cdot \rangle^{2s} \,\widehat{u} \, \overline{\widehat{v}} \dee \xi}\), where~\({\langle \xi \rangle = \sqrt{1 + \xi^2}}\), and write~\({\textnormal{L}^2}\) for~\({\textnormal{H}^0}\). Since~\({\langle \xi \rangle^s \eqsim 1 + \abs{\xi}^s}\), it follows, in the sense of weak \({\textnormal{L}^2}\)-derivatives, that~\({\| u \|_{s}^2 \eqsim  \| u \|_{0}^{2} + \| u^{(s)} \|_{0}^{2}}\) whenever~\({s \in \Z_+}\). In the fractional case~\({s = k + \sigma}\), with~\({k = \lfloor s \rfloor}\) and~\({\sigma \in (0, 1)}\), we also have the more \enquote{local}, finite-difference characterisation
\begin{equation*}
\label{eq:Hs-difference-norm}
\| u \|_{s}^2 \eqsim  \| u \|_{k}^2 + \int\limits_{\mathclap{|h| \leq \delta}} \bigl\| \upDelta_h^1 u^{(k)}\bigr\|_{0}^{2} \frac{ \dee h}{|h|^{1 + 2\sigma}}
\end{equation*}
where~\({\upDelta_h^1 f \coloneqq  f(\cdot + h) - f}\) and~\({\delta > 0}\) (commonly~\({\delta = \infty}\), but only behaviour around~\({h = 0}\) matters). All in all, we may therefore consider the space~\({\textnormal{H}^s(\Omega)}\) of real functions defined on an open set~\({\Omega \subset \R}\) whose norm equals that of~\({\textnormal{H}^s}\), except that \({\textnormal{L}^2}\)~integrals now go over~\({\Omega}\) (and with~\({\delta}\)~appropriately).

In the periodic case, given any~\({P > 0}\) and~\({q \geq 1}\), let~\({\textnormal{L}_P^q}\) be the space of \({P}\)-periodic, locally \({ q }\)-integrable functions with norm~\({\norm{u}_{\textnormal{L}_P^q} \coloneqq \bigl( \int_{- \frac{ P }{ 2 }}^{ \frac{ P }{ 2 }} \abs{u}^q \dee x \bigr)^{1/q}}\). In particular, \({u \in \textnormal{L}_P^2}\) has the Fourier-series representation~\({u = \sum_{\xi \in \Z}  \widehat{u}(\xi) \, e_\xi}\), now with~\({ \mathscr{F}}\) as an isomorphism~\({ \textnormal{L}_P^2 \to \Set{ \widehat{  u  } \in \ell^2(\Z) \given \widehat{  u  }( - \xi) = \overline{ \widehat{  u  }( \xi)} } }\), where
\begin{equation*}
 e_\xi(x) \coloneqq \frac{\textnormal{e}^{2 \uppi \textnormal{i} \xi x / P}}{\sqrt{P}} \qquad \text{and} \qquad \widehat{u}(\xi) = \innerproduct{u}{e_\xi}_{ \textnormal{L}_P^2} \coloneqq \int_{- \frac{ P }{ 2 }}^{ \frac{ P }{ 2 }} \!\!\! u \, \overline{e_\xi} \dee x.
\end{equation*}
Similarly as above, we introduce the \({P}\)-periodic real Sobolev space~\({\textnormal{H}_P^s}\), for~\({s \geq 0}\), with inner product \({ \innerproduct{u}{v}_{\textnormal{H}_P^s} \coloneqq \sum_{\xi \in \Z} \langle \xi \rangle_P^{2s} \, \widehat{u}(\xi) \, \overline{\widehat{v}(\xi)} }\) and norm~\({\norm{u}_{\textnormal{H}_P^s} \coloneqq \innerproduct{u}{u}_{\textnormal{H}_P^s}^{\frac{1}{2}}}\), where~\({\langle \xi \rangle_P \coloneqq \bigl\langle \tfrac{2 \uppi \xi}{P} \bigr\rangle}\). Again write~\({\textnormal{L}_P^2}\) for~\({\textnormal{H}_P^0}\) and note that
\begin{equation} \label{eq:periodicHs-difference-norm}
\| u \|_{ \textnormal{H}_P^s}^2 \eqsim  \| u \|_{\vphantom{ \textnormal{H}_P^s}\smash{\textnormal{H}_P^k}}^2 + \int\limits_{\mathclap{|h| \leq \delta}} \bigl\| \upDelta_h^1 u^{(k)}\bigr\|_{\vphantom{ \textnormal{H}_P^s}\smash{ \textnormal{L}_P^2}}^{2} \frac{\dee h}{|h|^{1 + 2\sigma}} 
\end{equation}
with~\({0 < \delta < \frac{P}{2}}\), omitting the last term if~\({s \in \Z_+}\). Thus~\({ \norm{ u }_{ \textnormal{H}_P^s } \eqsim \norm{ u }_{ \textnormal{H}^s \left( - \frac{P}{2}, \frac{P}{2} \right) } }\) for~\({ u \in \textnormal{H}^s_P }\). Moreover, for any~\({\varphi_P \in \textnormal{C}_{ \textnormal{c}}^{ \infty}(\R \to [0, 1]) }\) which is~\({ 1 }\) in~\({ \left( - \frac{P}{2}, \frac{P}{2} \right)  }\) and~\({ 0 }\) in~\({ \Set*{ \abs{ x }   \geq \frac{P}{2} + \tau } }\) for fixed~\({ \tau \lesssim P_{ \textnormal{min}} }\), we have
\begin{equation} \label{eq:localizing-HsP-in-Hs}
\norm{ u }_{ \textnormal{H}_P^s} \eqsim \norm{ \varphi_P u }_{ s }
\end{equation}
uniformly in~\({ P \geq P_{ \textnormal{min}}  > 0}\). \Cref{eq:localizing-HsP-in-Hs} demonstrates that~\({ \textnormal{H}_P^s }\) is locally in~\({ \textnormal{H}^s }\) and that results for~\({ \norm{  }_{ s }  }\) carry over to~\({ \norm{  }_{ \textnormal{H}_P^s }  }\)---in particular, we need not bother with the \({ P }\)-dependence in the hidden estimation constants. For example, when~\({s > \frac{1}{2}}\), there is a continuous embedding of~\({\textnormal{H}^s}\) into~\({ \textnormal{L}^\infty }\), and hence, \({\textnormal{H}_P^s \hookrightarrow \textnormal{L}^\infty}\)~also.

\subsection{Action of~\({L}\) on~\({ \textnormal{H}^s}\) and~\({ \textnormal{H}_P^s}\)} \label{sec:action-L}

It follows immediately from~\({\abs{\mathfrak{m}(\xi)} \lesssim \langle \xi \rangle^{\sigma}}\) that~\({L}\) maps~\({\textnormal{H}^s}\) continuously into~\({\textnormal{H}^{s + \abs{\sigma}}}\) for any~\({s}\). Its action on periodic spaces, however, is less trivial. If~\({\mathfrak{m} \in \textnormal{C}^\infty}\), then~\({ L}\) maps~\({\mathscr{S}}\) to itself and so it extends to a continuous operator~\({L \colon \mathscr{S}' \to \mathscr{S}'}\) still satisfying~\({\widehat{Lu} = \mathfrak{m} \, \widehat{u}}\). In~particular,
\begin{equation} \label{eq:action-m-periodic}
\widehat{Lu}(\xi) = \mathfrak{m}\bigl( \tfrac{2 \uppi \xi}{P} \bigr) \, \widehat{u}(\xi), \qquad \xi \in \Z,
\end{equation}
for \({P}\)-periodic distributions, so that~\({L \colon \textnormal{H}_P^s \to \textnormal{H}_P^{s + \abs{\sigma}}}\) continuously. Fortunately, there is a more direct approach to the periodic case which also works for irregular symbols in~\({ \textnormal{W}_0 }\).
\begin{proposition} \label{thm:convolution-periodic}
Convolution is a continuous bilinear operator~\({ \textnormal{L}^1 \ast \textnormal{L}_P^q \hookrightarrow \textnormal{L}_P^q }\) for all~\({q \in [1, \infty]}\). In~fact, if~\({f \in \textnormal{L}^1}\) and~\({u \in \textnormal{L}_P^q}\), then~\({f \ast u = f_P \ast_P u}\)~a.e., where
\begin{equation*}
f_P \coloneqq \sum_{j \in \Z} f(\cdot + jP) \in \textnormal{L}_P^1 \qquad \text{and} \qquad f_P \ast_P u \coloneqq  \int_{- \frac{P}{2}}^{ \frac{P}{2}} f_P(y) \, u(\cdot - y) \dee y.
\end{equation*}
Moreover,
\begin{equation} \label{eq:fourier-coeff-K_P}
\widehat{f_P}(\xi) = \sqrt{ \tfrac{2\uppi}{P}} \, \widehat{f}\bigl( \tfrac{2 \uppi \xi}{P} \bigr), \qquad \xi \in \Z,
\end{equation}
relating the Fourier coefficients of~\({ f_P }\) with the Fourier transform of~\({ f }\).
\end{proposition}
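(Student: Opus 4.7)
The plan is to handle the three assertions (continuity, the a.e.\ identity \({ f \ast u = f_P \ast_P u }\), and the Fourier coefficient formula) in sequence, with the periodisation \({ f_P }\) serving as the bridge between the line and the torus.

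First, I would establish that the series defining \({ f_P }\) converges a.e.\ to an element of \({ \textnormal{L}_P^1 }\). By Tonelli's theorem applied to \({ \abs{ f } }\),
\begin{equation*}
\int_{-P/2}^{P/2} \sum_{j \in \Z} \abs{ f(y + jP) } \dee y = \int_{\R} \abs{ f(y) } \dee y = \sqrt{2 \uppi} \, \norm{ f }_{ \textnormal{L}^1},
\end{equation*}
which is finite, so \({ f_P \in \textnormal{L}_P^1 }\) with \({ \norm{ f_P }_{\textnormal{L}_P^1 } \leq \norm{ f }_{ \textnormal{L}^1} }\) (modulo the \({ \sqrt{2 \uppi} }\) convention). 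Periodicity of \({ f_P }\) is immediate from index-shift invariance of the sum.

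Next, for the continuity of \({ f \ast u }\) on \({ \textnormal{L}^1 \times \textnormal{L}_P^q }\), I would first observe that \({ f \ast u }\) is \({ P }\)-periodic, since \({ u }\) is \({ P }\)-periodic and the convolution integral is translation-invariant in \({ u }\). For the norm bound I would use Minkowski's integral inequality (equivalently, the standard Young inequality adapted to the \({ \textnormal{L}_P^q }\) norm): for a.e.\ \({ x }\),
\begin{equation*}
\norm{ f \ast u }_{ \textnormal{L}_P^q} \leq \int_{\R} \abs{ f(y) } \norm{ u(\,\cdot\, - y) }_{ \textnormal{L}_P^q} \dee y = \norm{ f }_{ \textnormal{L}^1} \norm{ u }_{ \textnormal{L}_P^q},
\end{equation*}
again with the customary constants. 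This establishes \({ \textnormal{L}^1 \ast \textnormal{L}_P^q \hookrightarrow \textnormal{L}_P^q }\) continuously. To identify the convolution with \({ f_P \ast_P u }\), I would split \({ \R }\) into the fundamental intervals centered at \({ jP }\) and translate:
\begin{equation*}
(f \ast u)(x) = \sum_{j \in \Z} \int_{-P/2}^{P/2} f(y + jP) \, u(x - y - jP) \dee y = \sum_{j \in \Z} \int_{-P/2}^{P/2} f(y + jP) \, u(x - y) \dee y,
\end{equation*}
using \({ P }\)-periodicity of \({ u }\) in the last equality. The interchange of sum and integral is then justified by Fubini from the absolute-integrability estimate above, yielding \({ (f \ast u)(x) = \int_{-P/2}^{P/2} f_P(y) u(x - y) \dee y = (f_P \ast_P u)(x) }\) a.e.

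Finally, for the Fourier coefficient identity, I would compute directly from the definition together with the same unrolling step:
\begin{equation*}
\widehat{f_P}(\xi) = \int_{-P/2}^{P/2} f_P(y) \, \overline{e_\xi(y)} \dee y = \frac{1}{\sqrt{P}} \sum_{j \in \Z} \int_{-P/2}^{P/2} f(y + jP) \, \textnormal{e}^{-2 \uppi \textnormal{i} \xi y / P} \dee y,
\end{equation*}
and then, since \({ \textnormal{e}^{-2 \uppi \textnormal{i} \xi y / P} }\) is itself \({ P }\)-periodic in \({ y }\), change variables \({ y \mapsto y - jP }\) in each summand and recombine into an integral over \({ \R }\) to obtain \({ \widehat{f_P}(\xi) = \tfrac{1}{\sqrt{P}} \int_{\R} f(y) \textnormal{e}^{-2 \uppi \textnormal{i} \xi y / P} \dee y = \sqrt{2 \uppi / P} \, \widehat{f}\bigl( 2 \uppi \xi / P \bigr) }\), establishing~\eqref{eq:fourier-coeff-K_P}.

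There is no genuine obstacle; the only point requiring a shred of care is the justification of Fubini in the unrolling, which is immediate from \({ f \in \textnormal{L}^1 }\) and \({ u \in \textnormal{L}_P^q \subset \textnormal{L}_{\textnormal{loc}}^1 }\). This is essentially a textbook Poisson-summation/periodisation argument, recorded here in a form convenient for the subsequent treatment of \({ L }\) on \({ \textnormal{H}_P^s }\).
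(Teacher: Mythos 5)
Your proof is correct and follows essentially the same periodisation argument as the paper: Tonelli shows \({ f_P \in \textnormal{L}_P^1 }\) with \({ \norm{ f_P }_{ \textnormal{L}_P^1 } \leq \norm{ f }_{ \textnormal{L}^1 } }\), the unrolling of \({ \R }\) into fundamental intervals plus periodicity of \({ u }\) gives \({ f \ast u = f_P \ast_P u }\), and the Fourier coefficient formula comes from the same unrolling applied to \({ e^{-2\uppi\textnormal{i}\xi y/P} }\). The only small deviation is in how you obtain the operator bound: you invoke Minkowski's integral inequality on the line before proving the identity, whereas the paper first establishes the identity (via Fubini--Tonelli in the \({ q=1 }\) case and dominated convergence) and then applies Young's inequality in the form \({ \textnormal{L}_P^1 \ast_P \textnormal{L}_P^q \hookrightarrow \textnormal{L}_P^q }\); these are interchangeable and equally standard. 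One cosmetic slip: \({ \int_{\R}\abs{f}\dee y = \norm{f}_{\textnormal{L}^1} }\) with no factor of \({ \sqrt{2\uppi} }\) — that normalisation lives in \({ \mathscr{F} }\) and in the convention \({ Lu = \tfrac{1}{\sqrt{2\uppi}}K\ast u }\), not in the \({ \textnormal{L}^1 }\) norm; this does not affect the argument.
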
 \vspace*{-1em} 
\begin{proof}
Intuitively, we reduce~\({ \textnormal{L}^1 \ast \textnormal{L}_P^q \hookrightarrow \textnormal{L}_P^q }\) to a special case of~\({ \textnormal{L}_P^1 \ast_P^{} \textnormal{L}_P^q \hookrightarrow \textnormal{L}_P^q }\).  Since, in the most general case~\({ q = 1 }\),
\begin{equation*}
\int_{\R} \int_{- \frac{P}{2}}^{ \frac{P}{2}} \abs*{f(y) \, u(x - y) } \dee x \dee y = \norm{f}_{ \textnormal{L}^{1}} \norm{u}_{ \textnormal{L}_P^1} < \infty,
\end{equation*}
we find from the Fubini--Tonelli theorem that~\({f \ast u}\) exists a.e.\@ and is in~\({ \textnormal{L}_P^1}\). Subsequently we may then compute
\begin{align*} \SwapAboveDisplaySkip
f \ast u &= \sum_{j \in \Z} \int_{- \frac{P}{2}}^{ \frac{P}{2}} f(y + jP) \, u(\cdot - y) \dee y \\
&= \int_{- \frac{P}{2}}^{ \frac{P}{2}} \sum_{j \in \Z} f(y + jP) \, u(\cdot - y) \dee y  = f_P \ast_P u
\end{align*}
by dominated convergence, periodicity of~\({u}\) plus the fact that~\({f_P^{} \in \textnormal{L}_P^1}\). With this representation Young's inequality gives
\begin{equation*}
\norm{f \ast u}_{ \textnormal{L}_P^q} = \norm{f_P \ast_P u}_{ \textnormal{L}_P^q} \leq \norm{f_P}_{ \textnormal{L}_P^1} \norm{u}_{ \textnormal{L}_P^q},
\end{equation*}
and the result follows, noting that~\({ \norm{f_P}_{ \textnormal{L}_P^1} \leq \norm{f}_{ \textnormal{L}^{1}}}\). Similar reasoning also implies~\eqref{eq:fourier-coeff-K_P}.
\end{proof}
Directly from~\Cref{thm:convolution-periodic} and the convolution theorem for~\({\mathscr{F}}\) we then obtain the following result.

\begin{proposition}
\({L}\) is a Fourier multiplier on~\({\textnormal{L}_P^2}\) of the form~\eqref{eq:action-m-periodic}, mapping \({\textnormal{H}_P^s}\) to~\({\textnormal{H}_P^{s + \abs{\sigma}}}\) continuously.
\end{proposition}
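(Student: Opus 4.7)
The plan is to reduce the assertion to two ingredients already essentially in hand: the convolution representation furnished by \Cref{thm:convolution-periodic}, and the pointwise decay $\abs{\mathfrak{m}(\xi)} \lesssim \langle \xi \rangle^{\sigma}$ built into Assumption~\ref{assumption:dispersion}. Applying \Cref{thm:convolution-periodic} with $f = K/\sqrt{2\uppi} \in \textnormal{L}^1$ and $u \in \textnormal{L}_P^2 \subset \textnormal{L}_P^1$ gives $Lu = (K_P/\sqrt{2\uppi}) *_P u$ almost everywhere, and Young's inequality shows that this element of $\textnormal{L}_P^2$ depends continuously on $u \in \textnormal{L}_P^2$. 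It therefore suffices to compute its Fourier coefficients and estimate them in the appropriate weighted $\ell^2$ norm.

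For the Fourier-coefficient computation I would first establish, by Fubini and a translation (exploiting the $P$-periodicity of $w$ together with the periodicity of the exponential $e^{- 2 \uppi \textnormal{i} \xi \cdot /P}$ to absorb the resulting shift), the elementary identity
\[
\widehat{v *_P w}(\xi) = \sqrt{P}\, \widehat{v}(\xi)\, \widehat{w}(\xi), \qquad \xi \in \Z,
\]
for $v, w \in \textnormal{L}_P^2$. Combining this with \eqref{eq:fourier-coeff-K_P} applied to $f = K$, and noting that $\widehat{K} = \mathfrak{m}$, the three factors $\sqrt{P}$, $1/\sqrt{2\uppi}$ and $\sqrt{2\uppi/P}$ cancel cleanly to produce
\[
\widehat{Lu}(\xi) = \mathfrak{m}\bigl(\tfrac{2\uppi\xi}{P}\bigr)\, \widehat{u}(\xi),
\]
which is precisely \eqref{eq:action-m-periodic}.

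For the continuity bound I would exploit the identification $\langle \xi \rangle_P = \langle 2\uppi\xi/P \rangle$ together with $\abs{\sigma} = -\sigma$ to get $\abs{\mathfrak{m}(2\uppi\xi/P)} \lesssim \langle \xi \rangle_P^{-\abs{\sigma}}$ uniformly in $\xi$ and in $P$, whence
\[
\norm{Lu}_{\textnormal{H}_P^{s+\abs{\sigma}}}^2 = \sum_{\xi \in \Z} \langle \xi \rangle_P^{2(s + \abs{\sigma})} \abs*{\mathfrak{m}\bigl(\tfrac{2\uppi\xi}{P}\bigr)}^2 \abs{\widehat{u}(\xi)}^2 \lesssim \sum_{\xi \in \Z} \langle \xi \rangle_P^{2s} \abs{\widehat{u}(\xi)}^2 = \norm{u}_{\textnormal{H}_P^s}^2,
\]
which is the claim. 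There is no substantive obstacle here: all of the nonlocal work has already been absorbed into \Cref{thm:convolution-periodic}. The only genuine care required is the bookkeeping of the various $\sqrt{2\uppi}$ and $\sqrt{P}$ factors, which must combine so that the symbol evaluated at $2\uppi\xi/P$ appears with unit coefficient rather than some residual $P$-dependent prefactor.
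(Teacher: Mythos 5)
Your proposal is correct and is essentially the paper's own argument spelled out in full: the paper simply states that the result follows directly from \Cref{thm:convolution-periodic} together with the convolution theorem for the (periodic) Fourier transform. You supply exactly the bookkeeping the paper leaves implicit, namely the identity \({ \widehat{v \ast_P w}(\xi) = \sqrt{P}\,\widehat{v}(\xi)\,\widehat{w}(\xi) }\), the relation \eqref{eq:fourier-coeff-K_P} applied to \({ K }\), the cancellation of the \({\sqrt{P}}\) and \({\sqrt{2\uppi}}\) factors, and the weighted \({\ell^2}\) estimate using \({\abs{\mathfrak{m}(2\uppi\xi/P)} \lesssim \langle\xi\rangle_P^{-\abs{\sigma}}}\).
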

Bear in mind that~\Cref{thm:convolution-periodic} is by no means true for general~\({f \in \textnormal{L}^1}\) if~\({\textnormal{L}_P^q}\) is replaced by~\({\textnormal{L}_{\textnormal{loc}}^q}\); it~is the periodic structure that saves~us.

\enlargethispage{2\baselineskip} 

\subsection{Cut-off argument and estimates for~\({n}\)} \label{sec:nonlinearity}

In studying~\eqref{eq:variational-lagrange-principle}, we will need that~\({n}\)---or more precisely, the induced operator~\({n(u)(x) \coloneqq n(u(x))}\)---is well-defined on~\({ \textnormal{H}^s \cap \textnormal{L}^\infty }\) and satisfies a~\enquote{fractional chain rule}. Specifically, the~following result~\autocite[Theorem~5.3.4/1~(i)]{RunstSickel1996} holds. Its~proof is based on a Taylor expansion of~\({n}\) and maximal-function techniques on dyadic scales to control the~remainder.

\begin{proposition}[Fractional chain rule] \label{thm:chain-rule-Hs}
Consider the case~\({ n \in \textnormal{Lip}_{ \textnormal{loc}} }\) or~\({ n \in  \textnormal{C}_\textnormal{loc}^1 }\) with~\({ s \in [0, 1] }\) in Assumption~\ref{assumption:nonlinearity}, or the case~\({ n \in \textnormal{C}_{ \textnormal{loc}}^{\varsigma} }\) with~\({ \varsigma \in (1, 1+ q) }\) and~\({s \in [0, \varsigma)}\). Then~\({n}\) induces a composition operator on~\({\textnormal{H}^{s} \cap B}\) satisfying
\begin{equation} \label{eq:chain-rule-Hs}
\norm{n(u)}_{s} \lesssim \norm{u}_{\infty}^{q} \norm{u}_{s},
\end{equation}
where~\({ B }\) is a sufficiently small ball around~\({ 0 }\) in~\({ \textnormal{L}^\infty }\). If~\({ n }\) is a monomial of order~\({ 1 + q \in \Z_+ }\), then~\eqref{eq:chain-rule-Hs} holds for all~\({ s \geq 0 }\).
\end{proposition}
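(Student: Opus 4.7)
The plan is to split by the regularity of~\({ n }\), handling the low-regularity cases by hand and invoking the harmonic analysis of Runst and Sickel only in the genuinely high-regularity range.

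For $n \in \textnormal{Lip}_\textnormal{loc}$ or $n \in \textnormal{C}_\textnormal{loc}^1$ with $s \in [0,1]$, the whole argument should reduce to a pointwise Lipschitz bound: since $n(0) = 0$ and $n'(x) = \calO(\abs{x}^q)$ almost everywhere as $x \to 0$, on a small $\textnormal{L}^\infty$-ball~\({ B }\) around the origin the function $n$ is Lipschitz on $[-\norm{u}_\infty, \norm{u}_\infty]$ with constant $\lesssim \norm{u}_\infty^q$. The $\textnormal{L}^2$ bound then follows immediately from $\abs{n(u)} \lesssim \norm{u}_\infty^q \abs{u}$, and for fractional $s \in (0,1)$ I would substitute the companion estimate $\abs{n(u(x+h)) - n(u(x))} \lesssim \norm{u}_\infty^q \abs{u(x+h) - u(x)}$ into the Gagliardo seminorm
\begin{equation*}
\int\limits_{\abs{h} \leq 1} \bigl\| \upDelta_h^1 n(u) \bigr\|_0^2 \frac{\dee h}{\abs{h}^{1 + 2s}}
\end{equation*}
to recover $\norm{u}_s^2$ on the right, while the endpoint $s=1$ uses the a.e.\ identity $(n(u))' = n'(u)\, u'$.

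For $n \in \textnormal{C}_\textnormal{loc}^\varsigma$ with $\varsigma \in (1, 1+q)$ and $s \in [0, \varsigma)$, the range $s \leq 1$ is already settled. For $s \in (1, \varsigma)$ an elementary approach is Faà~di~Bruno's formula: setting $k \coloneqq \lfloor \varsigma \rfloor$, it expresses $(n(u))^{(k)}$ as a polynomial in $u, u', \ldots, u^{(k)}$ with coefficients $n^{(j)}(u)$ satisfying $n^{(j)}(x) = \calO(\abs{x}^{1+q-j})$, after which a Gagliardo estimate for the fractional part $s - k$ closes the argument. However, this scheme requires each $u^{(j)}$, $j \leq k$, to be bounded, which by Sobolev embedding demands $s > k + \tfrac{1}{2}$ and so wastes half a derivative; to reach every $s < \varsigma$ I would instead invoke~\autocite[Theorem~5.3.4/1~(i)]{RunstSickel1996}. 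For the separate monomial claim with $n(x) = \gamma x^{1+q}$ and arbitrary $s \geq 0$, I would induct on the degree using the Moser-type tame estimate $\norm{uv}_s \lesssim \norm{u}_\infty \norm{v}_s + \norm{v}_\infty \norm{u}_s$ on $\textnormal{H}^s \cap \textnormal{L}^\infty$, itself valid for all $s \geq 0$ via Bony's paraproduct decomposition.

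The main obstacle is precisely the half-derivative gap just identified: no elementary chain-rule argument can reach $s$ arbitrarily close to~\({ \varsigma }\) from below, and closing this gap requires the Littlewood--Paley and maximal-function machinery of Runst and Sickel that I would cite rather than reproduce. This sharpness is exactly what makes condition~\eqref{eq:sobolev-index} as weak as possible and opens up the fractional range $s \leq \tfrac{1}{2}$ in the main theorems.
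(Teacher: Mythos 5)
Your proposal is correct and essentially follows the same route as the paper. In the paper, Proposition~\ref{thm:chain-rule-Hs} is stated as a direct consequence of~\autocite[Theorem~5.3.4/1~(i)]{RunstSickel1996} and is not given a self-contained proof; the surrounding discussion in the introduction sketches exactly the decomposition you propose: the elementary Lipschitz/Gagliardo argument for~\( s \leq 1 \), the Faà~di~Bruno argument for~\( s > \lfloor \varsigma \rfloor + \tfrac{1}{2} \), and the citation of Runst--Sickel to close the half-derivative gap in between, while your inductive Moser-type tame estimate for the monomial case is the standard proof of the polynomial claim.
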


\noindent Chain rule-type results with gaps between~\({s}\) and~\({1 + q}\) are common in the literature, \textit{e.g.}~\autocite[Section~3]{ChrWei1991a}, but it does not seem to be well known that one can let~\({s}\) be arbitrarily close to the regularity index of the outer function. 

Since we shall find solitary waves from the periodic problem as~\({ P \to \infty }\), it is very important that~\eqref{eq:chain-rule-Hs} extends to~\({ \textnormal{H}_P^s }\) and holds \emph{uniformly} in~\({ P \geq P_{ \textnormal{min}} }\). Estimating
\begin{equation} \label{eq:chain-rule-HsP-proof}
\norm{n(u)}_{\textnormal{H}_P^s} \eqsim \norm{n(\varphi_P u)}_{s} \lesssim  \norm{ \varphi_P u}_{\infty}^{q} \norm{\varphi_P u}_{s} \eqsim \norm{u}_{\infty}^{q} \norm{u}_{\textnormal{H}_P^s}
\end{equation}
with help of~\eqref{eq:localizing-HsP-in-Hs}, shows that this is indeed the~case. The~first equivalence is a natural extension of~\eqref{eq:localizing-HsP-in-Hs} and proved in the same fashion using Leibniz' rule (\({ \lfloor s \rfloor }\)~times) plus the fact that~\({ \norm{ \varphi_P^{(k)} }_{ \infty } \lesssim \tau^{-k} \lesssim 1 }\) uniformly in~\({ P }\).

\begin{corollary} [Fractional chain rule on~\({ \textnormal{H}_P^s }\)] \label{thm:chain-rule-HsP}
Suppose under Assumption~\ref{assumption:nonlinearity} that \({ n \in \textnormal{Lip}_{ \textnormal{loc}} }\) or~\({ n \in  \textnormal{C}_\textnormal{loc}^1 }\) with~\({ s \in [0, 1] }\), or~\({ n \in \textnormal{C}_{ \textnormal{loc}}^{\varsigma} }\) with~\({ \varsigma \in (1, 1+ q) }\) and~\({s \in [0, \varsigma)}\). Then~\({ n }\) induces a composition operator on~\({ \textnormal{H}_P^s \cap B }\) satisfying, uniformly in~\({ P }\) bounded away from~\({ 0 }\),
\begin{equation} \label{eq:chain-rule-HsP}
\norm{ n(u) }_{ \textnormal{H}_P^s } \lesssim \norm{ u }_{ \infty }^q \norm{ u }_{ \textnormal{H}_P^s },
\end{equation}
where~\({ B }\) is a sufficiently small ball around~\({ 0 }\) in~\({ \textnormal{L}^\infty }\). If~\({ n }\) is a monomial of order~\({ 1 + q \in \Z_+ }\), then~\eqref{eq:chain-rule-HsP} holds for all~\({ s \geq 0 }\).
\end{corollary}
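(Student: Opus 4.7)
The plan is to transfer the fractional chain rule from~\({ \textnormal{H}^s }\) (Proposition~\ref{thm:chain-rule-Hs}) to the periodic setting via the localisation equivalence~\eqref{eq:localizing-HsP-in-Hs}, following the short computation outlined in~\eqref{eq:chain-rule-HsP-proof}. The idea is that multiplying a periodic~\({ u }\) by a smooth compactly supported cutoff~\({ \varphi_P }\) produces a function \({ \varphi_P u \in \textnormal{H}^s }\) of comparable norm to which Proposition~\ref{thm:chain-rule-Hs} applies, while the \({ \textnormal{L}^\infty }\)~factor is unaffected since \({ \norm{ \varphi_P u }_\infty = \norm{ u }_\infty }\) thanks to \({ \varphi_P \in [0, 1] }\) and \({ \varphi_P \equiv 1 }\) on a full period. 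Smallness in~\({ \textnormal{L}^\infty }\) is likewise preserved, so \({ \varphi_P u }\) sits in the ball~\({ B }\) whenever~\({ u }\) does.

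The only genuinely new step is the first equivalence \({ \norm{ n(u) }_{ \textnormal{H}_P^s } \eqsim \norm{ n(\varphi_P u) }_s }\) in~\eqref{eq:chain-rule-HsP-proof}, which is more delicate than~\eqref{eq:localizing-HsP-in-Hs} because the cutoff now sits \emph{inside}~\({ n }\); on the transition strip the functions \({ \varphi_P \cdot n(u) }\) and \({ n(\varphi_P u) }\) genuinely differ, so one cannot just quote~\eqref{eq:localizing-HsP-in-Hs} on~\({ n(u) }\). I expect this to be the main technical obstacle. To handle it cleanly, I would introduce a second, strictly narrower cutoff \({ \psi_P \in \textnormal{C}_{\textnormal{c}}^{\infty} }\) of the type allowed in~\eqref{eq:localizing-HsP-in-Hs}, chosen so that \({ \varphi_P \equiv 1 }\) on~\({ \support \psi_P }\). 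Then~\eqref{eq:localizing-HsP-in-Hs} applied to the periodic function \({ n(u) }\) with cutoff~\({ \psi_P }\) yields \({ \norm{ n(u) }_{ \textnormal{H}_P^s } \eqsim \norm{ \psi_P \cdot n(u) }_s }\), and the identity \({ \varphi_P u = u }\) on~\({ \support \psi_P }\) forces \({ \psi_P \cdot n(u) = \psi_P \cdot n(\varphi_P u) }\) pointwise. Since multiplication by the smooth, uniformly controlled cutoff~\({ \psi_P }\) is bounded on~\({ \textnormal{H}^s }\)---this is precisely where~\eqref{eq:periodicHs-difference-norm}, Leibniz' rule applied \({ \lfloor s \rfloor }\) times, and the \({ P }\)-independent bound \({ \norm{ \psi_P^{(k)} }_\infty \lesssim \tau^{-k} \lesssim 1 }\) come in---this reduction gives \({ \norm{ n(u) }_{ \textnormal{H}_P^s } \lesssim \norm{ n(\varphi_P u) }_s }\) uniformly in~\({ P \geq P_{\textnormal{min}} }\).

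The remainder is routine. Applying Proposition~\ref{thm:chain-rule-Hs} to the compactly supported \({ \varphi_P u \in \textnormal{H}^s \cap B }\) produces \({ \norm{ n(\varphi_P u) }_s \lesssim \norm{ \varphi_P u }_\infty^q \norm{ \varphi_P u }_s }\), and a final invocation of~\eqref{eq:localizing-HsP-in-Hs} converts \({ \norm{ \varphi_P u }_s }\) back to \({ \norm{ u }_{ \textnormal{H}_P^s } }\), yielding~\eqref{eq:chain-rule-HsP}. The monomial clause with \({ 1 + q \in \Z_+ }\) inherits from the unrestricted case of Proposition~\ref{thm:chain-rule-Hs} via exactly the same chain of estimates, since no step above imposed the upper bound \({ s < \varsigma }\) on its own.
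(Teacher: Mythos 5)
Your proof is correct and follows the same overall route as the paper: \eqref{eq:chain-rule-HsP-proof}, i.e.\ transfer via the localisation \eqref{eq:localizing-HsP-in-Hs} and the real-line chain rule \Cref{thm:chain-rule-Hs}. Where you go beyond the paper is in the first equivalence. The paper merely asserts that $\norm{n(u)}_{\textnormal{H}_P^s}\eqsim\norm{n(\varphi_P u)}_s$ is ``a natural extension of \eqref{eq:localizing-HsP-in-Hs} proved in the same fashion,'' while you correctly point out that a bare application of \eqref{eq:localizing-HsP-in-Hs} to the periodic function $n(u)$ gives $\norm{n(u)}_{\textnormal{H}_P^s}\eqsim\norm{\varphi_P\cdot n(u)}_s$, and $\varphi_P\cdot n(u)\ne n(\varphi_P u)$ on the transition strip---so something extra is needed. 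Your nested-cutoff argument with $\psi_P$ (together with the observation that only the $\lesssim$ direction of the first equivalence is actually required) closes that gap cleanly and uniformly in $P$. The one point you leave implicit is that for $\varphi_P\equiv 1$ on $\support\psi_P$ you need $\varphi_P$ to be $1$ on a slightly enlarged interval $\Set{\abs x\le\tfrac P2+\tau'}$ with $0<\tau'<\tau$ and $\psi_P$ supported there; both remain admissible in \eqref{eq:localizing-HsP-in-Hs}, so this is a harmless but worth-stating refinement. Your treatment of the $\textnormal{L}^\infty$ factor, the back-conversion of $\norm{\varphi_P u}_s$, and the monomial clause are all correct.
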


In the \textit{a~priori} unbounded case~\({s \leq \frac{1}{2}}\), we~also cut off the growth of~\({n}\) and consider instead
\begin{equation} \label{eq:n-cutoff}
\widetilde{n}(x) =
\begin{dcases}
n(x) & \text{if } \abs{x} \leq A_{\mu}; \\
n(A_{\mu} \sign x) & \text{if } \abs{x} > A_{\mu},
\end{dcases}
\end{equation}
where~\({A_{\mu} \sim \mu^\theta}\) and~\({\theta \in \left(0, \tfrac{1}{2} \right)}\). Then
\begin{equation} \label{eq:n-cutoff-estimate}
\abs{\widetilde{n}(x)} \lesssim \mu^{\theta q} \abs{x}
\end{equation}
for \emph{all}~\({x \in \R}\) for~\({\mu}\) sufficiently small. Moreover, now~\({ \widetilde{n}}\) is globally Lipschitz and satisfies, directly from~\eqref{eq:periodicHs-difference-norm},
\begin{equation*}
\norm{\widetilde{n}(u)}_{ \textnormal{H}_P^s} \lesssim \mu^{\theta q} \norm{u}_{ \textnormal{H}_P^s}.
\end{equation*}
This estimate mimics the fractional chain rule~\eqref{eq:chain-rule-HsP} up to a small loss in the exponent~\({ q }\). We shall obtain that~\({ \norm{u_P^\star}_{\infty}^2 \lesssim \mu }\) for solutions~\({u_P^\star}\) of the modified variational problem with~\({\widetilde{n}}\) replaced by~\({n}\). Therefore, since~\({\theta < \frac{1}{2}}\), we get~\({\norm{u_P^\star}_{\infty} \leq A_{\mu}}\) for all sufficiently small~\({\mu}\). In other words,~\({\widetilde{n}(u_P^\star) = n(u_P^\star)}\), and so~\({u_P^\star}\) in fact solves the original problem. For~the sake of brevity, write~\({n}\) for~\({\widetilde{n}}\) from now~on.

\Cref{thm:chain-rule-Hs} and \Cref{thm:chain-rule-HsP} naturally restrict the range of feasible~\({ s }\) from above. As~regards a~lower bound, we need~\({ u_P^\star \in \textnormal{L}^{ \infty}}\). By~construction~\({ n(u_P^\star) \in \textnormal{L}^\infty }\), and so from~\eqref{eq:traveling-wave} it suffices that~\({ L u_P^\star \in \textnormal{L}^{ \infty } }\). This~follows whenever~\({ s  > \frac{1}{2} - \abs{ \sigma } }\) in light of~\({ L \colon \textnormal{H}_P^s \to \textnormal{H}_P^{\vphantom{ s}\smash{s + \abs{ \sigma }}} }\). Furthermore,~\eqref{eq:traveling-wave} also yields
\begin{equation} \label{eq:Linfty-HsP-trick}
\bigl( \nu_P^{} - \mu^{ \theta q} \bigr) \norm{ u_P^\star }_{ \infty } \lesssim  \norm{ Lu_P^\star }_{ \infty } \lesssim \norm{ Lu_P^\star }_{\vphantom{ \textnormal{H}_P^s}\smash{ \textnormal{H}_P^{\vphantom{s}\smash{s + \abs{ \sigma }}}} } \lesssim  \norm{ u_P^\star }_{ \textnormal{H}_P^s }.
\end{equation}
Hence, as we will establish that~\({\!\nu_P }\) is uniformly bounded away from~\({ 0 }\) and \({ \norm{ u_P^\star }_{ \textnormal{H}_P^s }^2 \lesssim \mu }\) in~\Cref{thm:lower-bound-wave-speed-periodic,thm:bound-HsP-norm-mu}, this gives~\({ \norm{u_P^\star}_{\infty}^2 \lesssim \mu }\) for all sufficiently small~\({ \mu }\). Similar reasoning applies in the solitary-wave~case.

\subsection{Properties of functionals} \label{sec:functionals}

Finally, we list some basic features of~\({ \mathcal{L} }\), \({ \mathcal{N} }\), \({ \mathcal{Q} }\) and their periodic counterparts. By weak continuity of an operator we mean that the operator maps weakly convergent sequences to strongly convergent sequences, which in the result below follows from the compact embedding of~\({ \textnormal{H}_P^s }\) in~\({ \textnormal{H}_P^t }\) whenever~\({s > t }\).

\begin{proposition} \label{thm:functional-props}
If~\({ s \geq 0 }\), then~\({ \mathcal{L}, \mathcal{Q}, \mathcal{N} \in \textnormal{C}^{ 1 }( \textnormal{H}^s \to \R ) }\) and~\({ \mathcal{L}_P^{}, \mathcal{N}_P^{}, \mathcal{Q}_P^{} \in \textnormal{C}^1( \textnormal{H}_P^s \to \R) }\) have \({ \textnormal{L}^2 }\) and~\({ \textnormal{L}_P^2 }\) derivatives, respectively, given~by
\begin{equation*}
\mathcal{L}'(u) \coloneqq - L u, \qquad \mathcal{N}'(u) \coloneqq - n(u) \qquad \text{and} \qquad \mathcal{Q}'(u) \coloneqq u.
\end{equation*}
Moreover, if~\({ s > 0 }\), then \({\calL_P}\),~\({\calN_P}\) and thus also~\({\calE_P}\) are weakly continuous on~\({\textnormal{H}_P^s}\).
\end{proposition}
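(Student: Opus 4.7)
The plan is to verify each derivative formula via a direct Taylor-type expansion, carrying everything out first in $\textnormal{L}^2$ (respectively $\textnormal{L}_P^2$) and then transferring to $\textnormal{H}^s$ (respectively $\textnormal{H}_P^s$) via the continuous embedding valid for $s \geq 0$. The weak continuity statement will then follow from the compact embedding $\textnormal{H}_P^s \hookrightarrow \textnormal{L}_P^2$ available for $s > 0$.

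For $\calQ$, expanding yields
\begin{equation*}
\calQ(u + h) = \calQ(u) + \innerproduct{u}{h}_{\textnormal{L}^2} + \tfrac{1}{2}\norm{h}_0^2,
\end{equation*}
which reads off $\calQ'(u) = u$ with quadratic remainder $O(\norm{h}_s^2)$ and obvious norm-continuity. For $\calL$, realness and evenness of $\mathfrak{m}$ make $L$ self-adjoint on $\textnormal{L}^2$ (via Plancherel and $\overline{\mathfrak{m}} = \mathfrak{m}$), so
\begin{equation*}
\calL(u + h) = \calL(u) - \innerproduct{Lu}{h}_{\textnormal{L}^2} - \tfrac{1}{2}\innerproduct{Lh}{h}_{\textnormal{L}^2},
\end{equation*}
whose remainder is bounded by $\norm{\mathfrak{m}}_\infty \norm{h}_0^2 \lesssim \norm{h}_s^2$; continuity of $u \mapsto -Lu$ in $\textnormal{L}^2$ follows from $\textnormal{L}^2$-boundedness of $L$. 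For $\calN$, I invoke the cut-off of \cref{sec:nonlinearity}: since the cut-off $n$ is globally Lipschitz with some constant $M$, its primitive $N$ is $\textnormal{C}^1$ on $\R$ with the pointwise bound
\begin{equation*}
\abs{N(x + y) - N(x) - n(x)\, y} \leq \tfrac{M}{2}\, y^2,
\end{equation*}
which, after integration in $x$, delivers Fréchet differentiability in $\textnormal{L}^2$ with $\calN'(u) = -n(u)$. Continuity of $u \mapsto n(u)$ is immediate from the same Lipschitz bound together with $\norm{\cdot}_0 \lesssim \norm{\cdot}_s$. The periodic statement is proved identically, with $\textnormal{L}^2$ replaced by $\textnormal{L}_P^2$ and with \Cref{thm:convolution-periodic} supplying the $\textnormal{L}_P^2$-boundedness of $L$.

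For weak continuity, assume $s > 0$ and $u_n \rightharpoonup u$ in $\textnormal{H}_P^s$. The Rellich-type embedding $\textnormal{H}_P^s \hookrightarrow \textnormal{L}_P^2$ is compact, so $u_n \to u$ strongly in $\textnormal{L}_P^2$ with $\{u_n\}$ uniformly bounded there. Writing
\begin{equation*}
\calL_P(u_n) - \calL_P(u) = -\tfrac{1}{2}\innerproduct{Lu_n}{u_n - u}_{\textnormal{L}_P^2} - \tfrac{1}{2}\innerproduct{Lu}{u_n - u}_{\textnormal{L}_P^2}
\end{equation*}
by self-adjointness of $L$ and applying Cauchy--Schwarz together with $\norm{Lv}_{\textnormal{L}_P^2} \lesssim \norm{v}_{\textnormal{L}_P^2}$ gives $\calL_P(u_n) \to \calL_P(u)$. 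For $\calN_P$, the Lipschitz bound yields $\abs{N(u_n) - N(u)} \leq M(\abs{u_n} + \abs{u})\abs{u_n - u}$ pointwise, hence $\abs{\calN_P(u_n) - \calN_P(u)} \lesssim (\norm{u_n}_{\textnormal{L}_P^2} + \norm{u}_{\textnormal{L}_P^2})\norm{u_n - u}_{\textnormal{L}_P^2} \to 0$, and weak continuity of $\calE_P = \calL_P + \calN_P$ follows by summation.

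There is no deep obstacle; the whole argument rests on two concrete facts, namely the $\textnormal{L}^2$-self-adjointness of $L$ and the global Lipschitz property of the cut-off nonlinearity. The only bookkeeping point is that \emph{every} remainder must be estimated in terms of $\norm{h}_0$ so that no Sobolev regularity beyond $s \geq 0$ is needed---this is precisely what allows the proposition to cover the full range of $s$.
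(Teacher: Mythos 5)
Your proof is correct and fills in, via the natural Taylor-type expansions and the self-adjointness of $L$ on $\textnormal{L}^2$, exactly the details that the paper leaves unstated; the paper gives no proof of this proposition beyond the single hint (placed just before the statement) that weak continuity follows from the compact embedding $\textnormal{H}_P^s \hookrightarrow \textnormal{H}_P^t$ for $s > t$, which is precisely the mechanism you use. One small bookkeeping remark: the global Lipschitz bound for the cut-off $\widetilde n$ is available only when the cut-off of \cref{sec:nonlinearity} is in force (the case $s \leq \tfrac12$); for $s > \tfrac12$ one should instead use the embedding $\textnormal{H}^s \hookrightarrow \textnormal{L}^\infty$ to reduce to bounded subsets of $\R$ on which the original locally Lipschitz $n$ is Lipschitz with a uniform constant, but the estimates then go through verbatim, so this does not affect the conclusion.
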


\section{Penalised variational problem for periodic traveling waves} \label{sec:periodic}

\begin{wrapfigure}{r}{.49\textwidth}%
\vspace{-10pt}%
\begin{center}
\includegraphics{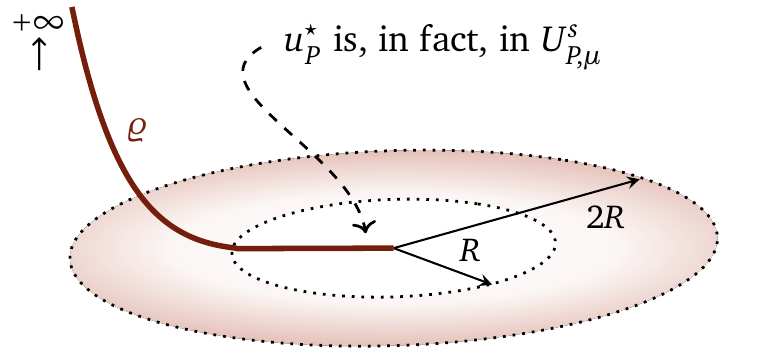}
\caption{Illustrating the penalised problem.}
\label{fig:penalization}
\end{center}\vspace{-.5em}%
\end{wrapfigure}
In this section we prove~\Cref{thm:existence-periodic} by finding a constrained local minimiser of~\({\calE_{P}}\) satisfying the Lagrange multiplier principle. Specifically, we look for a minimiser \({u_P^\star}\) in the set
\begin{equation*}
U_{P,\mu}^{s} \coloneqq U_{P, \mu}^{s, R} \coloneqq  \Set*{ u \in \textnormal{H}_P^s \given \norm{u}_{\textnormal{H}_P^s} < R \text{ and } \calQ_P(u) = \mu}
\end{equation*}
for which \({ \calE_P'(u_P^\star) + \nu_P^{} \calQ_P'(u_P^\star) = 0 }\) for a multiplier~\({\!\nu_P \in \R}\). Since~\({ \mathcal{E}_P }\) is noncoercive, however, minimising sequences may approach the \enquote{boundary} \({ \norm{  }_{ \textnormal{H}_P^s } = R }\) of~\({ U_{P, \mu}^s }\), where Lagrange's principle might fail. In~order to resolve this issue, we introduce a smooth, increasing \emph{penaliser}~\({\varrho \colon \left[0, (2R)^2  \right) \to [0, \infty)}\) satisfying
\begin{equation*}
\varrho(t) = 0  \text{ when }  0 \leq t \leq R^2 \qquad \text{and} \qquad \varrho(t) \nearrow \infty \text{ as } t \nearrow (2R)^2,
\end{equation*}
and instead minimise
\begin{equation*}
\calE_{P, \varrho}(u) \coloneqq \calE_P(u) + \varrho \left( \norm{u}_{\textnormal{H}_P^s}^2 \right)
\end{equation*}
over the larger set~\({\widetilde{U}_{P, \mu}^{} \coloneqq U_{P, \mu}^{s, 2R}}\); see \cref{fig:penalization}. For technical reasons, we also assume that for every \({a \in (0, 1)}\) there exists~\({b > 1}\) such that
\begin{equation} \label{eq:penalizer-derivative}
\varrho'(t) \lesssim \varrho(t)^a + \varrho(t)^b
\end{equation}
for all~\({t \in \left[ R^2, (2R)^2  \right)}\). An example~\autocite[Section~3]{EGW2012}, up to appropriate scaling, is given by
\begin{equation*}
t \mapsto
\begin{dcases}
\frac{\mathrm{e}^{-1/(t - R^2)}}{(2R)^2 - t} & \text{if } t \in \big(R^2, (2R)^2\big); \\
0 & \text{if } t \in \big[0, R^2 \big].
\end{dcases}
\end{equation*}
\textit{A~priori} estimates below show that~\({\varrho}\) is inactive at the minimum, and hence~\({u_P^\star \in U_{P, \mu}^{s}}\), as desired. 

\begin{lemma} \label{thm:penalized-existence}
\({\calE_{P, \varrho}}\) admits a minimiser~\({u_P^\star \in \widetilde{U}_{P, \mu}^{}}\) satisfying the Euler--Lagrange equation
\begin{equation} \label{eq:penalized-eulerlagrange}
\innerproduct*{Lu_P^\star + n(u_P^\star) - \nu_P^{} u_P^\star}{w}_{\textnormal{L}_P^2} = 2\varrho' \bigl( \norm{u_P^\star}_{\textnormal{H}_P^s}^2 \bigr) \innerproduct*{u_P^\star}{w}_{\textnormal{H}_P^s}
\end{equation}
for all~\({w \in \textnormal{H}_P^s}\), where~\({\!\nu_P \in \R}\) is the multiplier. If~\({\varrho' > 0}\), then~\({u_P^\star \in \textnormal{H}_P^{3s}}\).
\end{lemma}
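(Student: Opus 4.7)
The plan is to combine the direct method of the calculus of variations with a Lagrange multiplier argument, and then to use the Euler--Lagrange equation to bootstrap regularity in Fourier space. The main obstacle, as I see it, is the penalty functional, which is only weakly lower semicontinuous rather than weakly continuous; this asymmetry forces one to treat the three pieces of \(\mathcal{E}_{P,\varrho}\) with one-sided inequalities and to verify that the minimiser stays strictly inside the ball \(\|\cdot\|_{\textnormal{H}_P^s} < 2R\) so that Lagrange's theorem actually applies.

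First, I take a minimising sequence \(\{u_n\} \subset \widetilde{U}_{P,\mu}\) for \(\mathcal{E}_{P,\varrho}\). Since \(\mathcal{E}_P\) is continuous, hence uniformly bounded on the bounded set \(\widetilde{U}_{P,\mu}\) by \Cref{thm:functional-props}, the penalty contribution \(\varrho(\|u_n\|_{\textnormal{H}_P^s}^2)\) stays uniformly bounded along the sequence. The blow-up condition \(\varrho(t) \nearrow \infty\) as \(t \nearrow (2R)^2\) then traps \(\|u_n\|_{\textnormal{H}_P^s}^2\) inside a compact subinterval of \([0, (2R)^2)\), and reflexivity of \(\textnormal{H}_P^s\) furnishes a subsequence with \(u_n \rightharpoonup u_P^\star\). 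Weak continuity of \(\mathcal{Q}_P\) gives \(\mathcal{Q}_P(u_P^\star) = \mu\); weak continuity of \(\mathcal{E}_P\) combined with weak lower semicontinuity of \(u \mapsto \varrho(\|u\|_{\textnormal{H}_P^s}^2)\)---which follows from weak l.s.c.\@ of the Hilbert norm together with continuity and monotonicity of \(\varrho\)---yields \(\mathcal{E}_{P,\varrho}(u_P^\star) \leq \liminf \mathcal{E}_{P,\varrho}(u_n)\). The uniform \(\textnormal{H}_P^s\)-bound also passes to \(u_P^\star\), so the minimiser lies in the open interior of \(\widetilde{U}_{P,\mu}\).

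The Euler--Lagrange identity \eqref{eq:penalized-eulerlagrange} is then extracted by Lagrange's principle on the constraint manifold \(\{\mathcal{Q}_P = \mu\}\) in the Hilbert space \(\textnormal{H}_P^s\). The constraint is non-degenerate at \(u_P^\star\) because \(\mathcal{Q}_P'(u_P^\star) = u_P^\star\) has \(\textnormal{L}_P^2\)-norm \(\sqrt{2\mu} > 0\). Combining the \(\textnormal{L}_P^2\)-representatives of \(\mathcal{L}_P'\) and \(\mathcal{N}_P'\) from \Cref{thm:functional-props} with the Fréchet derivative \(\tfrac{\textnormal{d}}{\textnormal{d}u}\varrho(\|u\|_{\textnormal{H}_P^s}^2)[w] = 2\varrho'(\|u\|_{\textnormal{H}_P^s}^2)\langle u, w\rangle_{\textnormal{H}_P^s}\) of the penalty term, and then moving the penalty contribution to the right-hand side, gives \eqref{eq:penalized-eulerlagrange} for all \(w \in \textnormal{H}_P^s\).

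Finally, for the regularity claim, assume \(c := 2\varrho'(\|u_P^\star\|_{\textnormal{H}_P^s}^2) > 0\) and test \eqref{eq:penalized-eulerlagrange} against the Fourier basis \(\{e_\xi\}\) to obtain the pointwise identity \(\widehat{u_P^\star}(\xi) = c^{-1} \langle \xi \rangle_P^{-2s}\widehat{F}(\xi)\), where \(F := Lu_P^\star + n(u_P^\star) - \nu_P u_P^\star\). The three pieces of \(F\) all sit in \(\textnormal{H}_P^s\): \(Lu_P^\star\) lies in \(\textnormal{H}_P^{s+\abs{\sigma}}\) by~\cref{sec:action-L}, \(n(u_P^\star)\) lies in \(\textnormal{H}_P^s\) by \Cref{thm:chain-rule-HsP} (or immediately by the global Lipschitz character of the cut-off \(n\)), and \(u_P^\star \in \textnormal{H}_P^s\) trivially. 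Summing \(\langle\xi\rangle_P^{6s}|\widehat{u_P^\star}(\xi)|^2\) then yields \(\|u_P^\star\|_{\textnormal{H}_P^{3s}}^2 = c^{-2}\|F\|_{\textnormal{H}_P^s}^2 < \infty\), as required.
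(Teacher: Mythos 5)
Your proof is correct and follows essentially the same route as the paper: direct method to extract a minimiser (the paper phrases this via the generalised extreme value theorem after restricting to a weakly closed subset, but this is the same argument unrolled), Lagrange's multiplier rule using the non-degeneracy of \(\mathcal{Q}_P'\), and the Fourier-side bootstrap for the \(\textnormal{H}_P^{3s}\)-regularity. One small imprecision: ``\(\mathcal{E}_P\) is continuous, hence uniformly bounded on the bounded set \(\widetilde{U}_{P,\mu}\)'' is not a valid inference in itself (continuous functionals need not be bounded on bounded non-compact sets); the true reason is the \emph{weak} continuity from \Cref{thm:functional-props} (or the explicit estimates \(\abs{\mathcal{L}_P(u)}, \abs{\mathcal{N}_P(u)} \lesssim \norm{u}_{\textnormal{L}_P^2}^2\) for the cut-off nonlinearity), so the conclusion stands.
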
 \vspace*{-1.5em} 
\begin{proof}
Since~\({\varrho}\) is weakly lower semi-continuous and coercive, so is~\({\calE_{P, \varrho}}\) by \Cref{thm:functional-props}. Hence, it suffices to search for minimisers in the subset~\({\Set{u \in \widetilde{U}_{P, \mu} \given \norm{u}_{\textnormal{H}_P^s} \leq R'}}\) for some~\({R' < 2R}\). This set is weakly closed by the compact embedding~\({\textnormal{H}_P^s \hookrightarrow \textnormal{L}_P^2}\) for~\({s > 0}\) together with the fact that closed balls are weakly closed (a consequence of Mazur's~lemma). Existence of a minimiser~\({u_P^\star}\) now follows from the generalised extreme value theorem~(\autocite[theorem~1.2]{Struwe2008}). Evaluating
\begin{equation*}
\dualitypairing*{\calU_P'(u_P^\star)}{u_P^\star}_{ \textnormal{L}_P^2} = 2\, \calU_P(u_P^\star) > 0
\end{equation*}
shows that \({\dualitypairing{ \calU_P'(u_P^\star)}{ \cdot}_{ \textnormal{L}_P^2} }\) does not vanish identically, and so Lagrange's principle gives~\eqref{eq:penalized-eulerlagrange}.

As regards regularity, note that~\eqref{eq:penalized-eulerlagrange} especially holds for all~\({w}\) in the Fourier basis, implying that 
\begin{equation} \label{eq:penalized-eulerlagrange-discrete}
\widehat{Lu_P^\star} + \widehat{n(u_P^\star)} - \nu_P^{} \widehat{u_P^\star} = 2\varrho' \bigl( \norm{u_P^\star}_{\textnormal{H}_P^s}^2 \bigr) \langle \cdot \rangle_P^{2s} \widehat{u_P^\star}
\end{equation}
pointwise in~\({\Z}\). Since~\({u_P^\star, Lu_P^\star, n(u_P^\star) \in \textnormal{H}_P^s}\), we get~\({\langle \cdot \rangle_P^{2s} \widehat{u_P^\star} \in \scrF (\textnormal{H}_P^s)}\) if~\({\varrho' > 0}\), that is,~\({u_P^\star \in \textnormal{H}_P^{3s}}\).
\end{proof}

Perhaps~\({u_P^\star}\) is just a constant solution of~\eqref{eq:traveling-wave}? Due to the constraint~\({ \mathcal{Q}_P(u) = \mu }\), such solutions, if they exist, can only be of the form~\({u_\textnormal{trivial} \coloneqq  \pm \sqrt{2\mu / P}}\). Inserting \({ u_{ \textnormal{trivial}} }\) into~\eqref{eq:traveling-wave} gives
\begin{equation*}
(\nu_P - \mathfrak{m}(0)) \; u_{ \textnormal{trivial}} = n(u_{ \textnormal{trivial}}),
\end{equation*}
and since~\({ n }\) is superlinear near the origin, we observe that \({ u_{ \textnormal{trivial}} }\) will solve~\eqref{eq:traveling-wave} when~\({ \!\nu_P > \mathfrak{m}(0) }\) for suitable~\({ \mu }\) and~\({ P }\) with~\({ u_{ \textnormal{trivial}} }\) small enough. In~fact, constant solutions may also exist at subcritical speeds~\({ \!\nu_P < \mathfrak{m}(0) }\)---for example if~\({ u_{ \textnormal{trivial}} < 0 }\) and~\({ n(x) \equiv n_q(x) = \gamma \abs{ x }^{1+q} }\), with~\({ \gamma > 0 }\). Fortunately, however, \Cref{thm:functional-penalized-inf-estimate} demonstrates that \({ u_{ \textnormal{trivial}} }\) does not minimise~\({ \mathcal{E}_{ P, \varrho} }\) for sufficiently small~\({\mu}\) and large~\({P}\).

\begin{lemma} \label{thm:jensen-estimate}
For all \({q > 0}\) it is true that
\begin{equation*}
\Gamma_q \coloneqq \frac{1}{2\uppi} \int_{-\uppi}^{\uppi} \left( \sqrt{\tfrac{2}{3}} \left( 1 + \sin x \right) \right)^{2 + q}  \dee x > 1.
\end{equation*}
\end{lemma}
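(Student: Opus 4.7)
The plan is to recognise this as an immediate application of Jensen's inequality, anchored at the free endpoint \({q = 0}\). Write \({f(x) \coloneqq \sqrt{\frac{2}{3}}(1+\sin x)}\) and view \({d\mu \coloneqq \frac{dx}{2\pi}}\) as a probability measure on \({[-\pi,\pi]}\), so that \({\Gamma_q = \int_{-\pi}^{\pi} f^{2+q} \, d\mu}\).

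The first step is the direct calculation at \({q=0}\): expanding \({(1+\sin x)^2 = 1 + 2\sin x + \sin^2 x}\) and using \({\int_{-\pi}^\pi \sin x\, dx = 0}\) and \({\int_{-\pi}^\pi \sin^2 x\, dx = \pi}\) gives
\begin{equation*}
\Gamma_0 = \frac{2}{3} \cdot \frac{1}{2\pi}\int_{-\pi}^{\pi}(1+\sin x)^2\, dx = \frac{2}{3}\cdot \frac{3\pi}{2\pi} = 1,
\end{equation*}
which normalises the problem: we must show \({\Gamma_q > \Gamma_0}\) for \({q>0}\).

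The second step is to apply Jensen's inequality to the convex function \({\phi(t) \coloneqq t^{(2+q)/2}}\) on \({[0,\infty)}\), which is \emph{strictly} convex precisely because \({q > 0}\). With \({g \coloneqq f^2 \geq 0}\), Jensen's inequality gives
\begin{equation*}
1 = \Gamma_0^{(2+q)/2} = \phi\!\left(\int g\, d\mu \right) \leq \int \phi(g)\, d\mu = \int f^{2+q}\, d\mu = \Gamma_q,
\end{equation*}
and the inequality is strict because \({g = \frac{2}{3}(1+\sin x)^2}\) is not \({\mu}\)-a.e.\@ constant. This yields \({\Gamma_q > 1}\) for every \({q>0}\), as required.

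There is essentially no obstacle here; the only subtlety is the normalising identity \({\Gamma_0 = 1}\), which is precisely what makes the Jensen bound collapse to the sharp threshold \({1}\). Strict convexity of \({\phi}\) together with the non-constancy of \({f^2}\) upgrades the standard Jensen bound to the strict inequality stated in the lemma.
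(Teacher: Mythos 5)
Your proof is correct and essentially identical to the paper's: both apply Jensen's inequality with the strictly convex function \(t \mapsto t^{(2+q)/2}\) to the non-constant function \(\tfrac{2}{3}(1+\sin x)^2\), whose average over \([-\uppi,\uppi]\) equals \(1\). The only difference is that you spell out the normalising computation \(\Gamma_0 = 1\) explicitly, which the paper leaves as the unstated evaluation \(\varphi(1) = 1\).
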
 \vspace*{-1em} 
\begin{proof}
Define \({ f(x) = \left( \sqrt{\tfrac{2}{3}} \left( 1 + \sin x \right) \right)^2 }\) and~\({ \varphi(x) = x^{(2 + q)/2} }\). Then Jensen's inequality with strict convexity gives
\begin{equation*}
\Gamma_q = \frac{1}{2\uppi} \int_{-\uppi}^{\uppi} \!\!\! \varphi \big( f(x) \big) \dee x > \varphi \left( \frac{1}{2\uppi} \int_{-\uppi}^{\uppi} \!\!\! f(x) \dee x \right) = \varphi(1) = 1.
\end{equation*}
\end{proof}

\enlargethispage{2\baselineskip} 

\begin{lemma}
\label{thm:functional-penalized-inf-estimate}
For all sufficiently small~\({ \mu > 0 }\) there exists~\({P_\mu > 0}\) such that \({ u_{ \textnormal{trivial}} }\) does not minimise~\({\mathcal{E}_{P,\varrho}}\) on~\({\widetilde{U}_{\mu,P}}\) and
\begin{equation}
\label{eq:Eprho-inf-estimate}
\inf \Set*{\mathcal{E}_{P,\varrho}(u) \given u \in \widetilde{U}_{P, \mu}} <- \mu \left[ \mathfrak{m}(0) + C \left( \frac{2\mu}{P} \right)^{q/2} \right]
\end{equation}
whenever~\({ P \geq P_\mu  }\), where~\({ C > 0}\). If~\({n = n_q}\), we explicitly have~\({C = 2 \abs{\gamma} / (2 + q)}\).
\end{lemma}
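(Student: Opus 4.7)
The plan is to exhibit a specific \({ P }\)-periodic test function \({ u_P^{\#} \in \widetilde{U}_{P, \mu} }\) whose energy strictly beats both \({ \calE_P(u_\textnormal{trivial}) }\) and the claimed right-hand side of~\eqref{eq:Eprho-inf-estimate}. Guided by \Cref{thm:jensen-estimate}, set
\begin{equation*}
u_P^{\#}(x) \coloneqq \sign(\gamma) \, \sqrt{\tfrac{2}{3}} \, \sqrt{\tfrac{2\mu}{P}} \, \bigl( 1 + \sin(2 \uppi x / P) \bigr),
\end{equation*}
with the sign factor collapsing to~\({ +1 }\) in the second alternative of~\({ n_q }\). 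The first step is to verify that \({ \calQ_P(u_P^{\#}) = \mu }\) via the identity \({ \int_{-P/2}^{P/2}(1+\sin(2\uppi x/P))^2 \dee x = \tfrac{3P}{2} }\) and that \({ \norm{ u_P^{\#} }_{ \textnormal{H}_P^s}^{2} \lesssim \mu }\) uniformly in~\({ P \geq 1 }\) (since \({ u_P^{\#} }\) carries only the three Fourier modes~\({ 0, \pm 1 }\), whose weights~\({ \langle \cdot \rangle_P }\) remain bounded). For small~\({\mu}\) this puts \({ u_P^{\#} }\) strictly inside the ball of radius~\({ R }\), making~\({ \varrho }\) inactive so that~\({\calE_{P, \varrho}(u_P^{\#}) = \calE_P(u_P^{\#})}\).

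Next, since~\({ L }\) diagonalises on \({ \{ 1, \sin(2\uppi \cdot / P) \} }\) with eigenvalues~\({ \mathfrak{m}(0) }\) and~\({ \mathfrak{m}(2\uppi/P) }\), and the cross term integrates to zero, a direct computation gives \({ \calL(u_P^{\#}) = -\tfrac{\mu}{3}\bigl( 2 \mathfrak{m}(0) + \mathfrak{m}(2\uppi/P) \bigr) }\). Expanding via~\ref{assumption:dispersion-expansion} (recall \({ \mathfrak{m}^{(2\ell)}(0) < 0 }\)) yields
\begin{equation*}
\calL(u_P^{\#}) = -\mu \, \mathfrak{m}(0) + \frac{ \mu \abs{ \mathfrak{m}^{(2\ell)}(0) } }{ 3 (2\ell)! } \Bigl( \frac{2\uppi}{P} \Bigr)^{\!2\ell} + \calO\bigl( \mu P^{-2\ell - 2} \bigr).
\end{equation*}
For the nonlinear part, the sign choice in~\({ u_P^{\#} }\) gives \({ N_q(u_P^{\#}) = \tfrac{\abs{\gamma}}{2+q} \abs{u_P^{\#}}^{2+q} }\) pointwise in both alternatives of~\({ n_q }\); substituting \({ y = 2 \uppi x / P }\) and invoking \Cref{thm:jensen-estimate} produces exactly
\begin{equation*}
-\int_{-P/2}^{P/2} \!\!\! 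N_q(u_P^{\#}) \dee x = -\frac{2 \abs{\gamma} \, \Gamma_q }{2 + q} \, \mu \Bigl( \frac{2\mu}{P} \Bigr)^{\!q/2},
\end{equation*}
while the remainder \({ N_\textnormal{r}(x) = \smalloh\!(\abs{x}^{2+q}) }\) combined with \({ \norm{ u_P^{\#} }_{\infty}^{2} \lesssim \mu / P \to 0 }\) contributes only \({ \smalloh\!\bigl( \mu (2\mu/P)^{q/2} \bigr) }\).

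Combining these estimates gives
\begin{equation*}
\calE_P(u_P^{\#}) \leq -\mu \mathfrak{m}(0) - \frac{2\abs{\gamma}\Gamma_q}{2+q} \mu \Bigl( \frac{2\mu}{P} \Bigr)^{\!q/2} + \calO\bigl( \mu P^{-2\ell} \bigr) + \smalloh\!\Bigl( \mu \Bigl( \frac{2\mu}{P} \Bigr)^{\!q/2} \Bigr),
\end{equation*}
and the hard part is to separate the scales of the positive dispersive correction \({ \mu P^{-2\ell} }\) and the negative nonlinear gain \({ \mu (\mu / P)^{q/2} }\). This is where the hypothesis~\({ q < 4\ell }\) becomes crucial: the ratio \({ P^{-2\ell} / (2 \mu / P)^{q/2} \eqsim P^{-(2\ell - q/2)} \mu^{-q/2} }\) becomes arbitrarily small as soon as \({ P \geq P_\mu \coloneqq K \mu^{-q/(4\ell - q)} }\) for a sufficiently large constant~\({ K }\). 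Both correction terms are then absorbed into the positive margin \({ \tfrac{ 2 \abs{\gamma} ( \Gamma_q - 1) }{ 2 + q } \, \mu (2\mu/P)^{q/2} }\)---strictly positive since \({ \Gamma_q > 1 }\)---yielding the desired bound for any \({ C \in \bigl( 0, \tfrac{2 \abs{\gamma} \Gamma_q}{2+q} \bigr) }\), and specifically with \({ C = \tfrac{2 \abs{\gamma}}{2+q} }\) in the pure \({ n = n_q }\) case where \({ N_\textnormal{r} \equiv 0 }\) and all the slack comes from \({ \Gamma_q > 1 }\). Finally, a direct substitution shows \({ \calE_P(u_\textnormal{trivial}) = -\mu\bigl[ \mathfrak{m}(0) + \tfrac{2 \abs{\gamma}}{2+q} (2\mu/P)^{q/2} \bigr] }\) for the optimal sign choice (\({ \sign u_\textnormal{trivial} = \sign \gamma }\) in case~1, either sign in case~2), so the strict inequality just established forbids \({ u_\textnormal{trivial} }\) from being a minimiser.
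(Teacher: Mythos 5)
Your proposal is correct and takes essentially the same route as the paper: the same shifted sinusoidal test function, the same Jensen-type margin \( \Gamma_q > 1 \) from \Cref{thm:jensen-estimate}, and the same reliance on \( q < 4\ell \) to absorb the dispersive penalty \( \eqsim \mu P^{-2\ell} \) into the nonlinear gain \( \eqsim \mu (2\mu/P)^{q/2} \). You also make the threshold \( P_\mu \eqsim \mu^{-q/(4\ell - q)} \) explicit, which the paper leaves implicit in its \enquote{\( A \) small and \( P \) large enough} remark.
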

\begin{proof}
Constructively,
\begin{equation*}
u(x) \coloneqq A \sign(\gamma) \sqrt{\tfrac{2}{3}}\Bigl[ 1 + \sin \left(\tfrac{2 \uppi x}{P}\right) \Bigr],
\end{equation*}
scaled to obey~\({\mathcal{U}_P(u) = \mu}\), where~\({A \coloneqq \sqrt{2 \mu / P}}\), will be shown to satisfy both
\begin{equation}
\label{eq:Esinus_estimate}
\mathcal{E}_P(u) < - \mu \left( \mathfrak{m}(0) + C \, A^q \right) \qquad \text{and} \qquad \mathcal{E}_P(u) < \mathcal{E}_P(u_\textnormal{trivial})
\end{equation}
for suitable~\({\mu}\),~\({P}\), and~\({C > 0}\). As~\({u}\) lies in~\({U_{P, \mu}^s}\), where~\({\mathcal{E}_{P,\varrho} \equiv \mathcal{E}_P}\), for sufficiently small~\({\mu}\), this proves the~claim. (Note that it suffices to only consider positive~\({u_\textnormal{trivial}}\), because~\({\calE_{P, \varrho}(A) \leq \calE_{P, \varrho}(-A)}\).)

Indeed,
\begin{equation*}
 \mathcal{E}_P(A \sign\gamma ) = - \mu \left[\mathfrak{m}(0) + \tfrac{2 |\gamma|}{2 + q} A^q  + \smalloh (A^q)\right],
\end{equation*}
and 
\begin{equation*}
\mathcal{E}_P(u_\textnormal{trivial}) \geq \mathcal{E}_P(A \sign \gamma)
\end{equation*}
provided~\({A}\) is sufficiently small (this condition safeguards a possible issue when~\({n_q(x) = \gamma |x|^q}\) and~the signs of~\({u_\textnormal{trivial}}\) and~\({\gamma}\) coincide). Nonzero Fourier coefficients of~\({u}\) are~\({\widehat{u}_0 = 2\sqrt{\mu / 3}}\) and~\({|\widehat{u}_{\pm 1}| =  \sqrt{\mu / 3}}\), so that~\({\| u \|_{s}^2 = \tfrac{2}{3}\mu \left( 2 + \langle 1 \rangle_P^{2s} \right)}\) is controlled by~\({\mu}\). Moreover, expanding~\({ \mathfrak{m} }\) gives that
\begin{align*}
 \mathcal{L}_P(u) &= - \mu \left[\tfrac{2}{3} \mathfrak{m}(0) + \tfrac{1}{3}\mathfrak{m}\left( \tfrac{2 \uppi}{P} \right) \right] \\
 &= - \mu \left[ \mathfrak{m}(0) + c P^{-2\ell} + \mathcal{O}\left( P^{-2\ell - 2} \right)  \right]
\end{align*}
for~\({c \coloneqq \mathfrak{m}^{(2\ell)}(0) / (2\ell)! < 0}\). With~\({\Gamma_q}\) from~\Cref{thm:jensen-estimate}, this yields, after a change of variables in~\({\calN_P(u)}\), that
\begin{equation*}
\mathcal{E}_P(u) = - \mu \left[ \mathfrak{m}(0) + c P^{-2\ell} + \mathcal{O}\left( P^{-2\ell - 2} \right)  + \tfrac{2|\gamma|}{2 + q}\Gamma_q  \,A^q + \smalloh (A^q) \right].
\end{equation*}
Consequently, the first inequality in~\eqref{eq:Esinus_estimate} then holds for~\({A }\) sufficiently small, while, since~\({\Gamma_q > 1}\) and~\({q < 4 \ell}\), the second inequality becomes true for~\({A}\) sufficiently small and~\({P}\) large enough.
\end{proof}
\vspace*{-.8em} 

\begin{remark}
Bound~\eqref{eq:Eprho-inf-estimate} has not optimal order with respect to~\({ q }\) and has the defect of depending on~\({ P }\). By~comparing with the solitary-wave problem, however, we can do better; see~\Cref{thm:solitary-inf-upper-bound}.
\end{remark}

Closely based on~\autocite[Lemmas~3.5–6]{EGW2012} we next establish that~\({\varrho' \bigl( \norm{u_P^\star}_{\textnormal{H}_P^s}^2 \bigr)}\) eventually vanishes based on a lower bound on~\({\!\nu_P}\) and an~\textit{a~priori} estimate for~\({\norm{u_P^\star}_{\textnormal{H}_P^s}}\).

\begin{lemma} \label{thm:lower-bound-wave-speed-periodic}
With~\({ \mu }\) and~\({ P_\mu}\) as in~\Cref{thm:functional-penalized-inf-estimate}, the estimate
\begin{equation} \label{eq:lower-bound-wave-speed-periodic}
\nu_P - \mathfrak{m}(0) > \widetilde{C} \left( \frac{2\mu}{P} \right)^{q/2} - c_{\varrho} \mu^{\lambda} + \left\{
\begin{aligned}
& \mathcal{O}\bigl(\mu^{ \theta q}\bigr) && \text{if } s \leq \tfrac{ 1 }{ 2 } \\
& o\bigl(\norm{ u_P^\star }_{ \infty }^{q}\bigr)  && \text{if } s > \tfrac{ 1 }{ 2 }
\end{aligned}
\right\}
\end{equation}
holds over the set of minimisers~\({u_P^\star}\) of~\({\calE_{P, \varrho}^{}}\) over~\({\widetilde{U}_{P, \mu}^{}}\) and~\({P \geq P_\mu}\). Here~\({\widetilde{C} > 0 }\) (equals~\({ \abs{ \gamma } }\) if~\({ n = n_q }\)), \({  \lambda > 0}\), and~\({ c_{\varrho} \geq 0}\) vanishes when~\({\varrho = 0}\).
\end{lemma}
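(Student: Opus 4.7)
The plan is to test the Euler--Lagrange identity~\eqref{eq:penalized-eulerlagrange} with $w = u_P^\star$, combine the result with the upper bound on $\inf \mathcal{E}_{P, \varrho}$ from~\Cref{thm:functional-penalized-inf-estimate}, and upgrade the leading coefficient by exploiting the algebraic structure of $N_q$. Writing $u = u_P^\star$ and $t = \norm{u}_{\textnormal{H}_P^s}^2$ for brevity, the test with $w = u$ gives
\begin{equation*}
2\nu_P \mu = \int u L u \dee x + \int u n(u) \dee x - 2 \varrho'(t)\, t.
\end{equation*}
Rewriting the infimum inequality as $\int u L u \dee x + 2\int N(u) \dee x - 2\varrho(t) > 2\mu\mathfrak{m}(0) + 2\mu C (2\mu/P)^{q/2}$ and substituting $\int u L u \dee x$ from the Euler--Lagrange identity then yields
\begin{equation*}
2(\nu_P - \mathfrak{m}(0))\mu > 2\mu C\Bigl(\tfrac{2\mu}{P}\Bigr)^{q/2} + \Bigl[\int u n(u) \dee x - 2 \int N(u) \dee x\Bigr] + 2\bigl[\varrho(t) - \varrho'(t)\, t\bigr].
\end{equation*}

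The algebraic heart is the identity $x n_q(x) = (2+q) N_q(x)$, which together with the asymptotic $n_r(x) = o(\abs{x}^{1+q})$ (hence $N_r(x) = o(\abs{x}^{2+q})$) from Assumption~\ref{assumption:nonlinearity} and the crude estimate $\int \abs{u}^{2+q} \dee x \leq 2\mu \norm{u}_\infty^q$ produces $\int u n(u) \dee x - 2 \int N(u) \dee x = q \int N(u) \dee x + o(\mu \norm{u}_\infty^q)$. To upgrade the coefficient of $(2\mu/P)^{q/2}$ I next combine the $\mathcal{E}_{P,\varrho}$ bound with the pointwise estimate $\mathfrak{m} \leq \mathfrak{m}(0)$---equivalently, $\int u L u \dee x \leq 2\mathfrak{m}(0)\mu$---to obtain the \emph{lower} bound $\int N(u) \dee x > \mu C (2\mu/P)^{q/2} + \varrho(t)$. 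Substituting grows the leading coefficient from $C$ to $\widetilde{C} \coloneqq (2+q)C/2 > 0$; when $n = n_q$ this specialises to $\abs{\gamma}$ because~\Cref{thm:functional-penalized-inf-estimate} furnishes $C = 2\abs{\gamma}/(2+q)$. Dividing by $2\mu$ I arrive at
\begin{equation*}
\nu_P - \mathfrak{m}(0) > \widetilde{C}\Bigl(\tfrac{2\mu}{P}\Bigr)^{q/2} + \frac{(q+2)\varrho(t) - 2 \varrho'(t)\, t}{2\mu} + o(\norm{u}_\infty^q).
\end{equation*}

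The error $o(\norm{u}_\infty^q)$ translates to $\mathcal{O}(\mu^{\theta q})$ when $s \leq \tfrac{1}{2}$ via the cut-off bound $\norm{u}_\infty \leq A_\mu \lesssim \mu^\theta$, and stays $o(\norm{u}_\infty^q)$ when $s > \tfrac{1}{2}$ via $\textnormal{H}_P^s \hookrightarrow \textnormal{L}^\infty$. For the penalization I exploit $\varrho(t) \geq 0$ to get a lower bound $\geq -\varrho'(t)\, t/\mu$, then control $\varrho(t)$ from above by comparing $\mathcal{E}_{P,\varrho}(u) \leq \mathcal{E}_P(v)$ against the sinusoidal test function $v$ from~\Cref{thm:functional-penalized-inf-estimate}---which satisfies $\norm{v}_{\textnormal{H}_P^s}^2 \lesssim \mu < R^2$ so that $\varrho(\norm{v}^2) = 0$---and using the trivial lower bound $\mathcal{E}_P(u) \geq -\mathfrak{m}(0)\mu - \abs{\int N(u) \dee x}$ to cancel the leading $-\mu\mathfrak{m}(0)$ contributions, yielding $\varrho(t) \lesssim \mu$. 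Assumption~\eqref{eq:penalizer-derivative} with any $a \in (0,1)$ then gives $\varrho'(t) \lesssim \varrho(t)^a \lesssim \mu^a$, and combined with the a~priori bound $t = \norm{u}_{\textnormal{H}_P^s}^2 \lesssim \mu$ from the companion~\Cref{thm:bound-HsP-norm-mu} this produces $\varrho'(t)\, t / \mu \lesssim \mu^a$, delivering the $-c_\varrho \mu^\lambda$ term with $\lambda = a > 0$ (and $c_\varrho = 0$ when $\varrho \equiv 0$). The main obstacle is precisely this last step: without~\Cref{thm:bound-HsP-norm-mu} the naïve bound $t \leq (2R)^2$ would only yield a negative power of $\mu$ in the penalization error, so a simultaneous or bootstrap-style argument with that companion lemma is essential.
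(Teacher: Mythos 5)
Your algebraic route to the leading coefficient is equivalent to the paper's (the paper works through the identity \(\innerproduct{Lu+n(u)}{u}_{\textnormal{L}_P^2} = -(2+q)\calE_P(u) + q\calL_P(u) - \int[(2+q)N(u) - un(u)]\,\dee x\), whereas you achieve the same upgrade by first isolating \(\int N(u)\,\dee x > \mu C (2\mu/P)^{q/2} + \varrho(t)\) and feeding it back; both land on \(\widetilde{C} = (2+q)C/2\)), and your treatment of the remainder \(N_\textnormal{r}\) is fine. But the penalization argument contains a genuine gap. You only establish \(\varrho(t) \lesssim \mu\), and then — correctly observing that \(\varrho'(t)\,t/\mu \lesssim \mu^{a-1}\) blows up if one uses the crude \(t \leq (2R)^2\) — you appeal to \Cref{thm:bound-HsP-norm-mu} for \(t \lesssim \mu\). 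This is circular: \Cref{thm:bound-HsP-norm-mu} is proved \emph{after} the present lemma and explicitly invokes \eqref{eq:lower-bound-wave-speed-periodic} to bound \(\nu_P\) away from zero, so you cannot call on it here without a careful (and in this case unnecessary) simultaneous-induction argument.

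The paper closes the loop without any such bootstrap, and the point you missed is that the bound \(\varrho(t) \lesssim \mu\) can be sharpened to \(\varrho(t) \lesssim \mu^{1+\tilde\lambda}\) for some \(\tilde\lambda > 0\), after which the crude \(t \leq (2R)^2\) suffices. Indeed, from \(\calE_{P,\varrho}(u) < -\mu\mathfrak{m}(0)\) and \(\calL_P(u) \geq -\mathfrak{m}(0)\mu\) one gets \(\varrho(t) < -\calN_P(u) = \int N(u)\,\dee x\). When \(s \leq \tfrac{1}{2}\) the cut-off gives \(\abs{N(x)} \lesssim \mu^{\theta q}\abs{x}^2\), hence \(\varrho(t) \lesssim \mu^{1+\theta q}\); when \(s > \tfrac{1}{2}\), interpolating \(\norm{u}_\infty \lesssim \norm{u}_{\textnormal{L}_P^2}^\vartheta\norm{u}_{\textnormal{H}_P^s}^{1-\vartheta} \lesssim \mu^{\vartheta/2}\) gives \(\varrho(t) \lesssim \mu\norm{u}_\infty^q \lesssim \mu^{1 + \vartheta q/2}\). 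In either case \(\varrho(t) \lesssim \mu^{1+\tilde\lambda}\). Now use \eqref{eq:penalizer-derivative} and choose \(a \in (0,1)\) with \((1+\tilde\lambda)a > 1\); together with \(b > 1\) this yields \(\varrho'(t) \lesssim \mu^{(1+\tilde\lambda)a} + \mu^{(1+\tilde\lambda)b} \lesssim \mu^{1+\lambda}\) for some \(\lambda > 0\), so \(\varrho'(t)\,t/\mu \leq (2R)^2\,\varrho'(t)/\mu \lesssim \mu^\lambda\) with no appeal to \Cref{thm:bound-HsP-norm-mu}. This is exactly why the hypothesis that~\eqref{eq:penalizer-derivative} holds for \emph{every} \(a \in (0,1)\), rather than a fixed one, is part of the construction of~\(\varrho\).
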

\vspace*{-.8em} 
\begin{proof}
Write~\({u \coloneqq u_P^\star}\) for clarity. We shall obtain~\eqref{eq:lower-bound-wave-speed-periodic} using the identity
\begin{equation} \label{eq:wave-speed-periodic-proof-identity}
\innerproduct*{Lu + n(u)}{u}_{\textnormal{L}_P^2} = -(2 + q) \calE_P(u) + q \mathcal{L}_P(u) - \int_{-\frac{P}{2}}^{\frac{P}{2}} \bigl[ (2 + q) N(u) - u n(u) \bigr] \dee x,
\end{equation}
where the last integral vanishes if~\({ n }\) is homogeneous.

First choose~\({w = u}\) in~\eqref{eq:penalized-eulerlagrange} and observe that
\begin{equation*}
2 \nu_P \mu \geq \innerproduct*{Lu + n(u)}{u}_{\textnormal{L}_P^2} - \varrho' \bigl( \norm{u}_{\textnormal{H}_P^s}^2 \bigr) \cdot 4 R^2.
\end{equation*}
Since
\begin{equation*}
-\calE_P(u) = - \calE_{P, \varrho}(u)  + \varrho \bigl( \norm{u}_{\textnormal{H}_P^s}^{2} \bigr) >  \mu \left[ \mathfrak{m}(0) + C \left( \frac{2\mu}{P} \right)^{q/2} \right]
\end{equation*}
by~\eqref{eq:Eprho-inf-estimate} and~\({\varrho \geq 0}\), and~\({ \mathcal{L}_P(u) \geq - \mathfrak{m}(0) \mu }\), we deduce from~\eqref{eq:wave-speed-periodic-proof-identity} that
\begin{equation*}
\nu_P - \mathfrak{m}(0) > \underbrace{\tfrac{ 2 + q }{ 2 } C}_{ \eqqcolon \widetilde{ C }} \left( \frac{2\mu}{P} \right)^{q/2} - \mu^{-1} \varrho' \bigl( \norm{u}_{\textnormal{H}_P^s}^2 \bigr) \cdot 4 R^2 + \left\{
\begin{aligned}
& \mathcal{O}\bigl(\mu^{ \theta q}\bigr) && \text{if } s \leq \tfrac{ 1 }{ 2 } \\
& o\bigl(\norm{ u}_{ \infty }^{q}\bigr)  && \text{if } s > \tfrac{ 1 }{ 2 }
\end{aligned}
\right\},
\end{equation*}
because
\begin{equation*}
(2 + q) N(u(x)) - u(x) n(u(x)) = \left\{
\begin{aligned}
&  \mathcal{O}\bigl(\abs{ u(x) }^2 \mu^{ \theta q}\bigr) && \text{if } s \leq \tfrac{ 1 }{ 2 } \\
& o\bigl(\abs{ u(x) }^{ 2 + q }\bigr)  && \text{if } s > \tfrac{ 1 }{ 2 }
\end{aligned}
\right\}
\end{equation*}
uniformly over~\({ u \in \widetilde{U}_{P, \mu} }\) and~\({ x \in \R }\), where we used~\eqref{eq:n-cutoff-estimate} when~\({ s \leq \frac{1}{2} }\).

It remains to establish that~\({ \varrho' \bigl( \norm{u}_{\textnormal{H}_P^s}^2 \bigr) \lesssim \mu^{1 + \lambda} }\) for some~\({ \lambda > 0 }\), and using~\eqref{eq:penalizer-derivative}, it suffices to prove that~\({ \varrho \bigl( \norm{u}_{\textnormal{H}_P^s}^2 \bigr) \lesssim \mu^{1 + \tilde{ \lambda }} }\) for some~\({ \tilde{ \lambda } > 0 }\). Crudely, we have~\({ \mathcal{E}_{P, \varrho} (u) < - \mu \mathfrak{m}(0)}\), and so
\begin{equation*}
\varrho \bigl( \norm{u}_{\textnormal{H}_P^s}^2 \bigr) < - \mu \mathfrak{m}(0) - \mathcal{L}_P(u) - \mathcal{N}_P(u) \leq - \mathcal{N}_P(u).
\end{equation*}
If~\({ s \leq \frac{1}{2} }\), then~\({ - \mathcal{N}_P(u) \lesssim \mu^{1 + \theta q} }\) directly from~\({ \abs{ N(x) } \lesssim \mu^{ \theta q} \abs{ x }^2 }\). In case~\({ s > \frac{1}{2}}\), then~\({ - \mathcal{N}_P(u) \lesssim \mu\norm{ u }_{ \infty }^{ q } }\). Choose~\({ \vartheta \in (0, 1) }\) such that~\({ \widetilde{ s } \coloneqq (1 - \vartheta)s \in \bigl(\frac{1}{2}, s \bigr) }\). By interpolation,
\begin{equation} \label{eq:Linfty-estimate-interpolation-L2}
\norm{ u }_{ \infty }^{} \lesssim \norm{ u }_{ \vphantom{ \textnormal{H}_P^s}\smash{\textnormal{H}_P^{\widetilde{ s }}} }^{} \leq \norm{ u }_{ \vphantom{ \textnormal{H}_P^s}\smash{\textnormal{L}_P^2} }^{ \vartheta } \norm{ u }_{ \textnormal{H}_P^s }^{ 1- \vartheta} \lesssim \norm{ u }_{ \vphantom{ \textnormal{H}_P^s}\smash{\textnormal{L}_P^2} }^{ \vartheta}
\end{equation}
uniformly over~\({ u \in \widetilde{U}_{P, \mu} }\) and~\({ P \geq P_\mu }\), from which it follows that~\({ \varrho \bigl( \norm{u}_{\textnormal{H}_P^s}^2 \bigr) \lesssim \mu^{1 + \vartheta q} }\).
\end{proof}

\begin{lemma} \label{thm:bound-HsP-norm-mu}
The estimate
\begin{equation*}
\norm{u_P^\star}_{\textnormal{H}_P^s} \eqsim \mu^{ \frac{ 1 }{ 2 }}
\end{equation*}
holds uniformly over the set of minimisers of~\({ \mathcal{E}_{P, \varrho} }\) over~\({ \widetilde{ U }_{P, \mu} }\) and~\({ P \geq P_\mu }\).
\end{lemma}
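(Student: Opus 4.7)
The lower bound $\norm{u_P^\star}_{\textnormal{H}_P^s}\geq\sqrt{2\mu}$ is immediate from $\mathcal{Q}_P(u_P^\star)=\mu$ and the embedding $\textnormal{H}_P^s\hookrightarrow\textnormal{L}_P^2$. For the matching upper bound I~would work directly from the Fourier-side Euler--Lagrange equation~\eqref{eq:penalized-eulerlagrange-discrete}, rewritten as
\[
\widehat{u_P^\star}(\xi) \;=\; \frac{\widehat{n(u_P^\star)}(\xi)}{\nu_P^{}-\mathfrak{m}\bigl(\tfrac{2\uppi\xi}{P}\bigr)+2\varrho'\bigl(\norm{u_P^\star}_{\textnormal{H}_P^s}^{2}\bigr)\langle\xi\rangle_P^{2s}}, \qquad \xi\in\Z,
\]
and split the series for $\norm{u_P^\star}_{\textnormal{H}_P^s}^{2}$ at a fixed threshold $|2\uppi\xi/P|=\delta$. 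Since $\mathfrak{m}$ has a strict global maximum at~$0$ and decays at infinity, I~can fix $\delta,c_0>0$ with $\mathfrak{m}(0)-\mathfrak{m}(\eta)\geq c_0$ for $|\eta|\geq\delta$. Combined with~\Cref{thm:lower-bound-wave-speed-periodic}, which for all sufficiently small $\mu$ gives $\nu_P\geq\mathfrak{m}(0)-c_0/2$, the denominator stays bounded below by a $P$-independent positive constant whenever $|2\uppi\xi/P|\geq\delta$, as $\varrho'\geq 0$ only helps.

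The low-frequency tail is then controlled crudely by
\[
\sum_{|2\uppi\xi/P|<\delta}\!\langle\xi\rangle_P^{2s}\bigl|\widehat{u_P^\star}(\xi)\bigr|^{2} \;\leq\; \langle\delta\rangle^{2s}\norm{u_P^\star}_{\textnormal{L}_P^2}^{2} \;=\; 2\langle\delta\rangle^{2s}\mu \;\lesssim\; \mu,
\]
while the rearranged Euler--Lagrange identity gives $\bigl|\widehat{u_P^\star}(\xi)\bigr|\lesssim\bigl|\widehat{n(u_P^\star)}(\xi)\bigr|$ for $|2\uppi\xi/P|\geq\delta$, so the high-frequency tail is at most a constant multiple of $\norm{n(u_P^\star)}_{\textnormal{H}_P^s}^{2}$. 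To close the estimate I~would bring in the chain rule for $n$. In the regime $s>\tfrac{1}{2}$, \Cref{thm:chain-rule-HsP} combined with the Sobolev embedding $\textnormal{H}_P^s\hookrightarrow\textnormal{L}^{\infty}$ and the a~priori inclusion $u_P^\star\in\widetilde{U}_{P,\mu}^{}$ yields $\norm{n(u_P^\star)}_{\textnormal{H}_P^s}\lesssim R^{q}\norm{u_P^\star}_{\textnormal{H}_P^s}$. In the regime $s\leq\tfrac{1}{2}$, the cut-off nonlinearity~\eqref{eq:n-cutoff} is globally Lipschitz with constant~$\lesssim\mu^{\theta q}$, and the difference-norm characterisation~\eqref{eq:periodicHs-difference-norm} yields $\norm{n(u_P^\star)}_{\textnormal{H}_P^s}\lesssim\mu^{\theta q}\norm{u_P^\star}_{\textnormal{H}_P^s}$, as recorded in~\cref{sec:nonlinearity}.

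Together this leaves
\[
\norm{u_P^\star}_{\textnormal{H}_P^s}^{2} \;\leq\; C\mu + C\eta^{2}\norm{u_P^\star}_{\textnormal{H}_P^s}^{2}, \qquad \eta\in\{R^{q},\mu^{\theta q}\},
\]
and by taking $R$ or $\mu$ sufficiently small to force $C\eta^{2}<\tfrac{1}{2}$ I~absorb the last term into the left-hand side and conclude $\norm{u_P^\star}_{\textnormal{H}_P^s}^{2}\lesssim\mu$. The main obstacle is keeping every implicit constant independent of the period~$P$; this is secured by the localisation equivalence~\eqref{eq:localizing-HsP-in-Hs} (which propagates $P$-uniformity through \Cref{thm:chain-rule-HsP}) and by the fact that the spectral gap~$c_0$ is a property of the symbol alone.
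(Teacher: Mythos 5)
Your argument is correct, but it takes a genuinely different route from the paper. You split the Fourier spectrum at a fixed threshold~\({\delta}\), bound the low-frequency tail trivially by~\({\langle\delta\rangle^{2s}\cdot 2\mu}\), and on high frequencies exploit the \emph{spectral gap}~\({\nu_P-\mathfrak{m}(2\uppi\xi/P)\gtrsim 1}\) (which follows from the strict global maximum at~\({0}\), the decay of~\({\mathfrak{m}}\), and \Cref{thm:lower-bound-wave-speed-periodic} giving~\({\nu_P\geq\mathfrak{m}(0)-c_0/2}\) for small~\({\mu}\)); the sign~\({\varrho'\geq 0}\) only helps. The paper instead pairs the Euler--Lagrange equation with~\({\scrF^{-1}\bigl(\langle\cdot\rangle_P^{2s}\widebar{\widehat{u}}\bigr)}\), estimates~\({\abs{\innerproduct{Lu}{u}_{\textnormal{H}_P^s}}\lesssim\norm{u}_{\textnormal{H}_P^{s+\sigma/2}}^2}\) via the symbol decay, and closes by interpolating~\({\norm{u}_{\textnormal{H}_P^{s+\sigma/2}}}\) between~\({\textnormal{L}_P^2}\) and~\({\textnormal{H}_P^s}\) (with a case distinction at~\({\sigma=-2s}\)). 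Your version bypasses those interpolation inequalities entirely and is structurally more elementary, at the mild cost that in the~\({s>\tfrac12}\) case you bound the chain-rule constant by~\({R^q}\) and must then fix~\({R}\) small to absorb; the paper instead uses the interpolation~\eqref{eq:Linfty-estimate-interpolation-L2} to get~\({\norm{u}_\infty\lesssim\mu^{\vartheta/2}}\), so the absorbed factor becomes~\({\mu^{\vartheta q}}\) and no smallness of~\({R}\) is required. Fixing~\({R}\) small is legitimate here (the rest of the penalised setup is stable under shrinking~\({R}\) as long as~\({\mu\lesssim R^2}\)), but if you prefer to keep~\({R}\) a free parameter you could replace your~\({R^q}\) by~\({\mu^{\vartheta q}}\) via~\eqref{eq:Linfty-estimate-interpolation-L2} and take~\({\mu}\) small instead, matching the paper more closely in that one step.
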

\begin{proof}
Let~\({u \coloneqq u_P^\star}\) for convenience. Using~\({w \coloneqq \scrF^{-1} \bigl( \langle \cdot \rangle_P^{2s} \widebar{\widehat{u}} \bigr) \in \textnormal{H}_P^s}\) in~\eqref{eq:penalized-eulerlagrange} if~\({\varrho' > 0}\), or multiplying~\eqref{eq:penalized-eulerlagrange-discrete} by~\({\langle \cdot \rangle_P^{2s} \widebar{\widehat{u}}}\) and summing over~\({\Z}\) if~\({\varrho' = 0}\), we find---with the strong zero-convention (\({0 \cdot \infty = 0}\))---that
\begin{align*}
\nu_P \norm{u}_{\textnormal{H}_P^s}^{2} &= \innerproduct*{Lu + n(u)}{u}_{\textnormal{H}_P^s} - 2\varrho' \bigl( \norm{u}_{\textnormal{H}_P^s}^{2} \bigr) \norm{u}_{\vphantom{ \textnormal{H}_P^s}\smash{\textnormal{H}_P^{2s}}}^{2} \\
&\leq \norm{u}_{\vphantom{ \textnormal{H}_P^s}\smash{\textnormal{H}_P^{\smash[b]{s + \frac{\sigma}{2}}}}}^{2} + \norm*{n(u)}_{\textnormal{H}_P^s} \norm{u}_{\textnormal{H}_P^s},
\end{align*}
because \({ \abs{\innerproduct{Lu}{u}_{\textnormal{H}_P^s}} \lesssim \norm{u}_{\vphantom{ \textnormal{H}_P^s}\smash{\textnormal{H}_P^{\smash[b]{s + \frac{\sigma}{2}}}}}^{2}
 }\) by assumption on~\({ \mathfrak{m}}\). If~\({ s > \frac{ 1 }{ 2 } }\), the fractional chain rule (\Cref{thm:chain-rule-HsP}) and~\eqref{eq:Linfty-estimate-interpolation-L2} imply
\begin{equation*}
\norm*{n(u)}_{\textnormal{H}_P^s} \lesssim \norm{u}_{\infty}^q \norm{u}_{\textnormal{H}_P^s} \lesssim \mu^{ \vartheta q / 2} \norm{u}_{\textnormal{H}_P^s},
\end{equation*}
while if~\({ s \leq \frac{ 1 }{ 2 } }\), then
\begin{equation*}
\norm*{n(u)}_{\textnormal{H}_P^s} \lesssim \mu^{ \theta q} \norm{ u }_{ \textnormal{H}_P^s }.
\end{equation*}
From~\Cref{thm:lower-bound-wave-speed-periodic}, combined with~\eqref{eq:Linfty-estimate-interpolation-L2}~when~\({ s > \frac{ 1 }{ 2 } }\), we find that~\({\!\nu_P }\) is uniformly bounded away from~\({ 0 }\) for all sufficiently small~\({ \mu }\), uniformly over the set of minimisers of~\({ \mathcal{E}_{P, \varrho} }\) over~\({ \widetilde{ U }_{P, \mu} }\) and~\({ P \geq P_\mu }\). Hence, with~\({ \mu }\) possibly even smaller,
\begin{equation*}
\norm{ u }_{ \textnormal{H}_P^s }^{ 2 } \lesssim \norm{u}_{\vphantom{ \textnormal{H}_P^s}\smash{\textnormal{H}_P^{\smash[b]{s + \frac{\sigma}{2}}}}}^{2}.
\end{equation*}
Interpolating
\begin{equation*}
\norm{u}_{\vphantom{ \textnormal{H}_P^s}\smash{\textnormal{H}_P^{\smash[b]{s + \frac{\sigma}{2}}}}}^{2} \leq \norm{ u }_{\vphantom{ \textnormal{H}_P^s}\smash{ \textnormal{L}_P^2 }}^{ \abs{ \sigma } / s} \norm{ u }_{ \textnormal{H}_P^s }^{ 2 - (\abs{ \sigma } / s) }
\end{equation*}
if~\({ \sigma > -2s }\), or using that \({ \norm{u}_{\vphantom{ \textnormal{H}_P^s}\smash{\textnormal{H}_P^{\smash[b]{s + \frac{\sigma}{2}}}}}^{2} \leq \norm{ u }_{\vphantom{ \textnormal{H}_P^s}\smash{\textnormal{L}_P^2}}^2 }\) if~\({ \sigma \leq -2s }\), then gives~\({  \norm{ u }_{ \textnormal{H}_P^s } \lesssim \norm{ u }_{ \textnormal{L}_P^2 } }\), and in combination with~\({ \norm{ u }_{ \textnormal{H}_P^s } \geq \norm{ u }_{ \textnormal{L}_P^2 } }\) and~\({ \norm{ u }_{ \textnormal{L}_P^2 } =  (2\mu)^{ \frac{ 1 }{ 2 }} }\), this concludes the proof. 
\end{proof}

According to~\Cref{thm:bound-HsP-norm-mu}, \({ \varrho }\) vanishes for sufficiently small~\({ \mu }\), and so~\({ u_P^\star }\) is in fact a minimiser for~\({ \mathcal{E}_P }\) over~\({ U_{P, \mu}^s }\) satisfying \({ \norm{ u_P^\star }_{ \infty } \lesssim \norm{ u_P^\star }_{ \textnormal{H}_P^s } \eqsim \mu^{ \frac{1}{2}} }\), where we remember estimate~\eqref{eq:Linfty-HsP-trick}. In~particular, \({ u_P^\star }\) solves~\eqref{eq:traveling-wave} with wave speed~\({ \!\nu_P }\), noting that
\begin{equation}  \label{eq:wavespeed-est-from-equation}
\nu_P^{} - \mathfrak{m}(0) \lesssim \norm{ u_P^\star }_{ \infty }^{ q } \lesssim \mu^{q/2}
\end{equation}
uniformly over~\({ P \geq P_\mu }\), which follows from
\begin{equation*}
( \nu_P^{} - \mathfrak{m}(0) ) \norm{ u_P^\star }_{ \textnormal{L}_P^2 } \leq \norm{ ( \nu_P^{} - L ) u_P^\star }_{ \textnormal{L}_P^2 } = \norm{ n(u_P^\star) }_{ \textnormal{L}_P^2 } \lesssim \norm{ u_P^\star }_{ \infty }^{ q } \norm{ u_P^\star }_{ \textnormal{L}_P^2 } .
\end{equation*}

In order to finish~\Cref{thm:existence-periodic}, it remains to establish the improved bounds on~\({ \!\nu_P }\) and~\({ \norm{ u_P^\star }_{ \infty } }\). This will be done in~\cref{sec:subadditivity}; see the discussion following~\Cref{thm:wavespeed-final-bound}.

\section{From the periodic to the solitary-wave problem: a~special minimising sequence} \label{sec:special-min-seq}

As outlined in~\cref{sec:variational-method}, we now construct a special minimising sequence for the solitary-wave problem with help of suitable scalings, truncations and translations of~\({ u_P^\star }\). To this end, we first establish a general asymptotic result as~\({ P \to \infty }\) for convolution operators with integrable kernels.
\begin{lemma} \label{thm:convolution-approximation-periodic-solitary}
Let~\({ f \in \textnormal{L}^1 }\) and~\({ \Set{ \widetilde{ u }_P }_P \subset \textnormal{H}^s }\) be a bounded family of functions with~\({ \support \widetilde{ u }_P \subset \bigl( - \frac{P}{2}, \frac{P}{2} \bigr) }\), and associate, for each~\({ P }\), the periodic extension~\({ u_P \coloneqq \sum_{ j \in \Z } \widetilde{ u }_P(\cdot + jP) \in \textnormal{H}_P^s }\) of~\({ \widetilde{ u }_P }\).
Then
\begin{equation*}
\norm*{ f \ast (\widetilde{ u }_P - u_P) }_{\textnormal{H}^s \left( - \frac{P}{2}, \frac{P}{2}  \right) } \to 0 \qquad \text{and} \qquad \norm{ f \ast \widetilde{ u }_P  }_{ \textnormal{H}^s \left( \Set*{ \abs{ x } > \frac{P}{2} } \right) } \to 0 \qquad \text{as } P \to \infty.
\end{equation*}
\end{lemma}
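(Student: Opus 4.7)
The strategy is to reduce both estimates to a single ``tail of $h := f \ast \widetilde{u}_P$'' statement via a periodisation identity. First, mirroring the Fubini--Tonelli step in the proof of~\Cref{thm:convolution-periodic}, we show that $f \ast u_P$ equals the $P$-periodisation of $h \in \textnormal{H}^s(\R)$, that is, $f \ast u_P = h_P := \sum_{j \in \Z} h(\cdot + jP)$. Consequently
\begin{equation*}
f \ast (\widetilde{u}_P - u_P) \;=\; h - h_P \;=\; -\sum_{j \neq 0} h(\cdot + jP),
\end{equation*}
so that on $\left(-\tfrac{P}{2}, \tfrac{P}{2}\right)$ the right-hand side is precisely the ``wrap-around'' of the portion of $h$ residing in $\{\abs{x} > \tfrac{P}{2}\}$. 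Both assertions of the lemma are thus two faces of the same coin and reduce to showing that $h$ concentrates in $\left(-\tfrac{P}{2}, \tfrac{P}{2}\right)$ as $P \to \infty$.

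Next, fix $\varepsilon > 0$ and approximate $f$ by a compactly supported piece. Since $f \in \textnormal{L}^1$, one may choose $R = R_\varepsilon$ with $\norm{f \mathbf{1}_{\{\abs{y} > R\}}}_{\textnormal{L}^1} < \varepsilon$ and split $f = f_R + f^R$, where $f_R := f \mathbf{1}_{[-R, R]}$. Young's inequality for fractional norms---applied directly on $\R$ and transported to the periodic side via~\Cref{thm:convolution-periodic} together with~\eqref{eq:localizing-HsP-in-Hs}---gives
\begin{equation*}
\norm{f^R \ast \widetilde{u}_P}_{s} \;\lesssim\; \norm{f^R}_{\textnormal{L}^1} \, \sup_P \norm{\widetilde{u}_P}_{s} \;\lesssim\; \varepsilon
\end{equation*}
uniformly in $P$, and similarly $\norm{f^R \ast u_P}_{\textnormal{H}_P^s} \lesssim \varepsilon$; this controls the $f^R$-contribution to both claimed norms.

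For the compactly supported $f_R$, the support of $f_R \ast \widetilde{u}_P$ lies in the Minkowski sum $[-R, R] + \support \widetilde{u}_P$. Once $P > 2R$, the support of $f_R$ fits within a single fundamental domain, on which $u_P \equiv \widetilde{u}_P$, and a direct computation yields $f_R \ast u_P = f_R \ast \widetilde{u}_P$ on the interior set $\left(-\tfrac{P}{2} + R, \tfrac{P}{2} - R\right)$. Hence $f_R \ast (\widetilde{u}_P - u_P)$ on $\left(-\tfrac{P}{2}, \tfrac{P}{2}\right)$ and the restriction of $f_R \ast \widetilde{u}_P$ to $\{\abs{x} > \tfrac{P}{2}\}$ both concentrate in boundary strips of width $R$. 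Their vanishing as $P \to \infty$ follows once we know that the effective support of $\widetilde{u}_P$ sits well inside $\left(-\tfrac{P}{2}, \tfrac{P}{2}\right)$---a structural feature of the constructions used to produce such families from the periodic minimisers.

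Sending $\varepsilon \to 0$ completes the argument. The chief obstacle is this boundary-layer analysis: the $\textnormal{H}^s$-smallness on a fixed-width strip anchored at $\pm \tfrac{P}{2}$ must be extracted from the fact that the mass of $\widetilde{u}_P$ stays uniformly away from the moving boundary, which is a stronger property than plain $\textnormal{H}^s$-norm boundedness and will be ensured by the explicit construction of the special minimising sequence in the sequel.
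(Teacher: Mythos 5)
Your argument takes a genuinely different route from the paper's, and in doing so it surfaces a real issue with the lemma as stated. The paper's proof starts from the identity
\begin{equation*}
f \ast (\widetilde{u}_P - u_P)(x) = \int_{-\frac{P}{2}}^{\frac{P}{2}} \bigl[ f(x - y) - f_P(x - y) \bigr] \, \widetilde{u}_P(y) \dee y, \qquad x \in \bigl(-\tfrac{P}{2}, \tfrac{P}{2}\bigr),
\end{equation*}
applies Young's inequality to deduce \({\norm{f \ast (\widetilde{u}_P - u_P)}_{\textnormal{L}^2(-\frac{P}{2}, \frac{P}{2})} \leq \norm{f - f_P}_{\textnormal{L}^1(-\frac{P}{2}, \frac{P}{2})} \norm{\widetilde{u}_P}_{0}}\), concludes the first assertion from \({\norm{f - f_P}_{\textnormal{L}^1(-\frac{P}{2}, \frac{P}{2})} \to 0}\), and derives the second from the first by an \({\ell^2 \leq \ell^1}\) wrap-around argument applied to \({\abs{f}}\) and \({\abs{\widetilde{u}_P}}\). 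You instead truncate \({f}\) rather than comparing it with \({f_P}\); you reduce both claims to the tail of \({h = f \ast \widetilde{u}_P}\) via the periodisation identity \({f \ast u_P = h_P}\), dispose of \({f^R}\) by a uniform Young bound, and observe that the compactly supported piece \({f_R}\) collapses everything to a boundary-strip contribution governed by the part of \({\widetilde{u}_P}\) near \({\pm\frac{P}{2}}\)---a quantity that boundedness of \({\Set{\widetilde{u}_P}_P}\) in \({\textnormal{H}^s}\) together with the support condition does \emph{not} force to vanish.

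Your hesitation at that final step is warranted, because the lemma's hypotheses are in fact insufficient. The paper's Young estimate does not hold as claimed: as \({x}\) and \({y}\) range over \({(-\frac{P}{2}, \frac{P}{2})}\), the argument \({x-y}\) of \({f - f_P}\) sweeps over the doubled interval \({(-P, P)}\), and since \({f - f_P}\) is not \({P}\)-periodic, the correct Young bound involves \({\norm{f - f_P}_{\textnormal{L}^1(-P, P)}}\), which does not go to zero. Concretely, take \({f = 1_{[-2,2]}}\) and \({\widetilde{u}_P = g(\cdot - \tfrac{P}{2} + 1)}\) for a fixed nonzero bump \({g}\) supported in \({(0,1)}\). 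Then \({f_P \equiv f}\) on \({(-\frac{P}{2}, \frac{P}{2})}\) once \({P > 4}\), so \({\norm{f - f_P}_{\textnormal{L}^1(-\frac{P}{2}, \frac{P}{2})} = 0}\), yet both \({\norm{f \ast \widetilde{u}_P}_{\textnormal{L}^2(\Set*{\abs{x} > P/2})}}\) and \({\norm{f \ast (\widetilde{u}_P - u_P)}_{\textnormal{L}^2(-\frac{P}{2}, \frac{P}{2})}}\) are strictly positive and independent of \({P}\). What is needed is exactly the boundary-layer decay you flagged---say \({\norm{\widetilde{u}_P}_{\textnormal{H}^s(\Set*{P/2 - r < \abs{x} < P/2})} \to 0}\) for each fixed \({r > 0}\)---which is precisely what the construction in~\Cref{sec:special-min-seq} arranges (via the translation placing \({\Omega_P}\) at \({\pm\frac{P}{2}}\) and the cut-off \({\chi_P}\)), so the downstream applications are unaffected; but the lemma as isolated requires this extra hypothesis, and neither your argument nor the paper's establishes it without one.
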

\begin{proof}
Note first that~\({ f \ast u_P = f_P \ast_P u_P }\) by~\Cref{thm:convolution-periodic}, where~\({ f_P = \sum_{ j \in \Z } f( \cdot + jP) \in \textnormal{L}_P^1 }\). As~such,
\begin{equation} \label{eq:convolution-transfer-to-kernel}
f \ast (\widetilde{ u }_P - u_P)(x) = \int_{ - \frac{P}{2} }^{ \frac{P}{2} } \bigl[ f(x - y) - f_P(x - y) \bigr] \, \widetilde{ u }_P(y) \dee y = (f - f_P) \ast_P \widetilde{ u }_P(x)
\end{equation}
for~\({ x \in \bigl( -\frac{ P }{ 2 }, \frac{P}{2} \bigr)  }\), using that~\({ u_P \equiv \widetilde{ u }_P }\)~there. Young's inequality then gives
\begin{equation*}
\norm*{ f \ast (\widetilde{ u }_P - u_P) }_{\textnormal{L}^2 \left( - \frac{P}{2}, \frac{P}{2}  \right) } \leq \norm{f - f_P}_{ \textnormal{L}^1\left( - \frac{P}{2}, \frac{P}{2} \right)} \norm{ \widetilde{u}_P}_{ \textnormal{L}^2} \xrightarrow[P \to \infty]{} 0,
\end{equation*}
because~\({ \Set{ \widetilde{ u }_P }_P }\) is bounded in~\({ \textnormal{L}^2 }\) and~\({ \norm{f - f_P}_{ \textnormal{L}^1\left( - \frac{P}{2}, \frac{P}{2} \right)} = \norm{f}_{ \textnormal{L}^1\left(\Set*{ \abs{x} > \frac{P}{2}}\right)} \to 0}\) as~\({ P \to \infty }\).

Switching to~\({ \norm{ f \ast \widetilde{ u }_P  }_{ \textnormal{L}^2 \left( \Set*{ \abs{ x } > \frac{P}{2} } \right) } }\), put~\({ v_j \coloneqq f \ast \widetilde{ u }_P( \cdot + jP) }\) and observe from dominated convergence that
\begin{equation*}
\norm{ f \ast \widetilde{ u }_P  }_{ \textnormal{L}^2 \left( \Set*{ \abs{ x } > \frac{P}{2} } \right) }^2 = \sum_{ \abs{j} \geq 1} \int_{- \frac{P}{2}}^{ \frac{P}{2}} \abs{v_j}^2 \dee x = \int_{ - \frac{P}{2} }^{ \frac{P}{2} } \sum_{ \abs{ j } \geq 1 } \abs{ v_j }^2 \dee x \leq \int_{ - \frac{P}{2} }^{ \frac{P}{2} } \abs[\Big]{\sum_{ \abs{ j } \geq 1 } \abs{ v_j } }^2 \dee x,
\end{equation*}
where the last estimate used~\({ \norm{  }_{ \ell^2(\Z \setminus \Set{0}) } \leq \norm{  }_{ \ell^1(\Z \setminus \Set{0}) } }\). Dominated convergence once more yields
\begin{align*}
\sum_{ \abs{ j } \geq 1 } \abs{ v_j(x) } & \leq \sum_{ \abs{ j } \geq 1 } \int_{ - \frac{P}{2} }^{ \frac{P}{2} } \abs{ f( x + jP - y) } \, \abs{ \widetilde{ u }_P(y) } \dee y \\
&= \int_{ - \frac{P}{2} }^{ \frac{P}{2} } \sum_{ \abs{ j } \geq 1 } \abs{f( x + jP - y) } \, \abs{ \widetilde{ u }_P(y) } \dee y \\
&= (\abs{ f }_P - \abs{ f }) \ast_P \abs{ \widetilde{ u }_P }(x),
\end{align*}
for~\({ x \in \bigl( -\frac{ P }{ 2 }, \frac{P}{2} \bigr)  }\), where~\({ \abs{ f }_P \coloneqq \sum_{ j \in \Z } \abs{ f ( \cdot + jP) } }\). Introducing~\({  \abs{ \widetilde{ u }_P }_P \coloneqq  \sum_{ j \in \Z } \abs{ \widetilde{ u }_P( \cdot + jP) } }\) also, we have
\begin{equation*}
(\abs{ f }_P - \abs{ f }) \ast_P \abs{ \widetilde{ u }_P } = \abs{ f } \ast ( \abs{ \widetilde{ u }_P }_P - \abs{ \widetilde{ u }_P })
\end{equation*}
from~\eqref{eq:convolution-transfer-to-kernel}, and so in total,
\begin{equation*}
\norm{ f \ast \widetilde{ u }_P  }_{ \textnormal{L}^2 \left( \Set*{ \abs{ x } > \frac{P}{2} } \right) } \leq \norm[\big]{ \abs{ f } \ast ( \abs{ \widetilde{ u }_P } - \abs{ \widetilde{ u }_P }_P)  }_{\textnormal{L}^2 \left( - \frac{P}{2}, \frac{P}{2} \right) }.
\end{equation*}
Now note that the right-hand side vanishes as~\({ P \to \infty }\) by the first result applied to~\({ \abs{ f } }\) and~\({ \abs{ \widetilde{ u }_P } }\).

With case~\({ s = 0 }\) established, case~\({ s \in \Z_+ }\) follows immediately since convolution commutes with differentiation, and so by interpolation it is true for any~\({ s \geq 0 }\).
\end{proof}

\begin{proposition} \label{thm:approx-periodic-solitary}
Let~\({ \Set{ \widetilde{ u }_P }_P \subset \textnormal{H}^s }\) be a bounded family of functions with~\({ \support \widetilde{ u }_P \subset \bigl( - \frac{P}{2}, \frac{P}{2} \bigr) }\), and define \({ u_P \coloneqq \sum_{ j \in \Z } \widetilde{ u }_P(\cdot + jP) \in \textnormal{H}_P^s }\). Then
\begin{alignat}{4}
\mathcal{A}(\widetilde{ u }_P) - \mathcal{A}_P(u_P) &= 0, \quad \norm{ \mathcal{A}'(\widetilde{ u }_P) - \mathcal{A}_P'(u_P) }_{\textnormal{H}^s \left( - \frac{P}{2}, \frac{P}{2}  \right) } &= 0 \quad &\text{and} \quad \norm{ \mathcal{A}'(\widetilde{ u }_P) }_{ \textnormal{H}^s \left( \Set*{ \abs{ x } > \frac{P}{2} } \right) } = 0 \notag \\
\intertext{for~\({ \mathcal{A} \in \Set{ \mathcal{Q}, \mathcal{N} } }\) and any~\({ P }\), whereas}
\mathcal{L}(\widetilde{ u }_P) - \mathcal{L}_P(u_P) &\to 0, \quad \norm{ \mathcal{L}'(\widetilde{ u }_P) - \mathcal{L}_P'(u_P) }_{\textnormal{H}^s \left( - \frac{P}{2}, \frac{P}{2}  \right) } &\to 0 \quad &\text{and} \quad \norm{ \mathcal{L}'(\widetilde{ u }_P) }_{ \textnormal{H}^s \left( \Set*{ \abs{ x } > \frac{P}{2} } \right) } \to 0 \label{eq:L-approx-periodic-solitary}
\end{alignat}
as~\({ P \to \infty }\). In~particular,~\eqref{eq:L-approx-periodic-solitary} also holds for~\({ \mathcal{E} }\),~\({ \mathcal{E}_P }\).
\end{proposition}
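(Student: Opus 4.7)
The plan is to split the three functionals into the local ones $\mathcal{Q}, \mathcal{N}$---for which all three assertions hold as exact equalities---and the nonlocal dispersive functional $\mathcal{L}$, for which we would invoke the just-proved \Cref{thm:convolution-approximation-periodic-solitary} with $f = K$. Since $\mathcal{E} = \mathcal{L} + \mathcal{N}$ and $\mathcal{E}_P = \mathcal{L}_P + \mathcal{N}_P$, the statements for $\mathcal{E}$ and $\mathcal{E}_P$ follow immediately from the $\mathcal{L}$ and $\mathcal{N}$ results by linearity, so there are really only two cases to treat.

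For $\mathcal{A} \in \Set{\mathcal{Q}, \mathcal{N}}$ the argument is pure bookkeeping. By construction $u_P \equiv \widetilde{u}_P$ on $\bigl(-\tfrac{P}{2}, \tfrac{P}{2}\bigr)$, while $\widetilde{u}_P$ vanishes outside this interval. Together with $N(0) = n(0) = 0$, I would compute
\[
\mathcal{Q}(\widetilde{u}_P) = \tfrac{1}{2}\int_{-P/2}^{P/2} \widetilde{u}_P^{\,2} \dee x = \tfrac{1}{2}\int_{-P/2}^{P/2} u_P^{2} \dee x = \mathcal{Q}_P(u_P)
\]
and analogously for $\mathcal{N}$. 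The $\textnormal{L}^2$-derivatives $\mathcal{Q}'(\widetilde{u}_P) = \widetilde{u}_P$ and $\mathcal{N}'(\widetilde{u}_P) = -n(\widetilde{u}_P)$ then agree pointwise with $\mathcal{Q}_P'(u_P) = u_P$ and $\mathcal{N}_P'(u_P) = -n(u_P)$ on $\bigl(-\tfrac{P}{2}, \tfrac{P}{2}\bigr)$, and vanish identically on $\Set*{\abs{x} \geq \tfrac{P}{2}}$, so both derivative norms are exactly zero.

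For $\mathcal{L}'$ the first key observation is that $L\widetilde{u}_P = \tfrac{1}{\sqrt{2\uppi}} K \ast \widetilde{u}_P$ on $\R$, and by \Cref{thm:convolution-periodic}, $Lu_P = \tfrac{1}{\sqrt{2\uppi}} K \ast u_P = \tfrac{1}{\sqrt{2\uppi}} K_P \ast_P u_P$ as a $P$-periodic function. Hence the restriction of $\mathcal{L}'(\widetilde{u}_P) - \mathcal{L}_P'(u_P) = -(L\widetilde{u}_P - Lu_P)$ to $\bigl(-\tfrac{P}{2}, \tfrac{P}{2}\bigr)$ equals $-\tfrac{1}{\sqrt{2\uppi}} K \ast (\widetilde{u}_P - u_P)$, whose $\textnormal{H}^s\bigl(-\tfrac{P}{2}, \tfrac{P}{2}\bigr)$ norm vanishes as $P \to \infty$ by the first conclusion of \Cref{thm:convolution-approximation-periodic-solitary}; and $\norm{\mathcal{L}'(\widetilde{u}_P)}_{\textnormal{H}^s(\Set*{\abs{x} > P/2})} = \tfrac{1}{\sqrt{2\uppi}} \norm{K \ast \widetilde{u}_P}_{\textnormal{H}^s(\Set*{\abs{x} > P/2})} \to 0$ is exactly the second conclusion.

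Finally, for the scalar quantity $\mathcal{L}(\widetilde{u}_P) - \mathcal{L}_P(u_P)$, I would use the support of $\widetilde{u}_P$ and $u_P \equiv \widetilde{u}_P$ on $\bigl(-\tfrac{P}{2}, \tfrac{P}{2}\bigr)$ to rewrite
\[
\mathcal{L}(\widetilde{u}_P) - \mathcal{L}_P(u_P) = -\tfrac{1}{2}\int_{-P/2}^{P/2} \widetilde{u}_P \, (L\widetilde{u}_P - Lu_P) \dee x,
\]
and then bound the right-hand side by $\tfrac{1}{2}\norm{\widetilde{u}_P}_{\textnormal{L}^2} \norm{L\widetilde{u}_P - Lu_P}_{\textnormal{L}^2(-P/2, P/2)}$ via Cauchy--Schwarz; the first factor is uniformly bounded by hypothesis, and the second tends to zero by the estimate just established for $\mathcal{L}'$. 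The only genuinely substantive step in the whole argument is the identification of $L$ acting on periodic functions with the periodic convolution $\tfrac{1}{\sqrt{2\uppi}} K_P \ast_P$, because only via that identification does the general $\textnormal{L}^1$-kernel result \Cref{thm:convolution-approximation-periodic-solitary} transfer to both $\mathcal{L}'$ and, by duality, to $\mathcal{L}$; everything else is accounting for supports.
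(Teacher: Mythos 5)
Your proof is correct and follows essentially the same route as the paper: the exact identities for $\mathcal{Q}$ and $\mathcal{N}$ are pure support bookkeeping, the $\mathcal{L}'$ statements are precisely the two conclusions of \Cref{thm:convolution-approximation-periodic-solitary} applied with $f = K$ (via the identification $K \ast u_P = K_P \ast_P u_P$ from \Cref{thm:convolution-periodic}), and the scalar $\mathcal{L}(\widetilde{u}_P) - \mathcal{L}_P(u_P)$ is dispatched by Cauchy--Schwarz against the $\textnormal{L}^2\bigl(-\tfrac{P}{2},\tfrac{P}{2}\bigr)$ bound just established. This matches the paper's argument step for step.
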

\begin{proof}
Since
\begin{equation*}
n(\widetilde{ u }_P(x)) =
\begin{dcases}
n(u_P(x)) & \text{if } \abs{ x } < \tfrac{ P }{ 2 }; \\
0 & \text{if } \abs{ x } \geq \tfrac{P}{2},
\end{dcases}
\end{equation*}
and similarly for~\({ N }\), we readily obtain the result for~\({ \mathcal{A} = \mathcal{N} }\). Case~\({ \mathcal{A} = \mathcal{Q} }\) is analogous.

As~\({ L }\) is a convolution operator with integrable kernel, \Cref{thm:convolution-approximation-periodic-solitary} gives the last two statements in~\eqref{eq:L-approx-periodic-solitary}. Observe then also that
\begin{equation*}
\abs*{ \mathcal{L}(\widetilde{ u }_P) - \mathcal{L}_P(u_P) } =  \abs[\Bigg]{\tfrac{1}{2} \int_{ - \frac{P}{2} }^{ \frac{P}{2} } \widetilde{ u }_P \left( L \widetilde{ u }_P - L u_P \right)  \dee x } \leq \tfrac{ 1 }{ 2 } \norm{ \widetilde{ u }_P }_{ 0 } \norm{ L \widetilde{ u }_P - L u_P }_{ \textnormal{L}^2 \left( - \frac{P}{2}, \frac{P}{2} \right)  } \xrightarrow[P \to \infty]{} 0.
\end{equation*}
\end{proof}

We now define the \emph{special minimising sequence} for~\({ \mathcal{E} }\) over~\({ U_{ \mu}^s }\) as~follows. Since \({ \norm{ u_P^\star }_{ \textnormal{H}_P^s } \eqsim \mu^{ \frac{ 1 }{ 2 }} }\) holds uniformly over~\({ P \geq P_\mu }\) by \Cref{thm:bound-HsP-norm-mu}, there must---argue by contradiction---be subintervals \({ \Omega_P \coloneqq \left(x_P - \ell_P, x_P + \ell_P \right) }\) of~\({ \bigl( - \frac{P}{2}, \frac{P}{2} \bigr)  }\) such that~\({ \norm{ u_P^\star }_{ \textnormal{H}^s ( \Omega_P) } \to 0 }\) and~\({ \ell_P > 0 }\) satisfies \({ \ell_P / P \to 0 }\) as~\({ P \to \infty }\). We then translate and smoothly truncate~\({ u_P^\star }\) into
\begin{equation} \label{eq:special-min-seq}
\widetilde{ u }_P^{} \coloneqq A_P^{} \chi_P^{} u_P^{\star \, \textnormal{\textsc{t}}} \qquad \text{with }  u_P^{\star \, \textnormal{\textsc{t}}} \coloneqq u_P^\star \left(\cdot + x_P^{} + \tfrac{ P }{ 2 }\right),
\end{equation}
where~\({ \chi_P \in \textnormal{C}_{ \textnormal{c}}^\infty(\R \to [0, 1]) }\) equals
\begin{equation*}
\chi_P(x) =
\begin{dcases}
1 & \text{if } \abs{ x } \leq \tfrac{ P }{ 2 } - \ell_P; \\
0 & \text{if } \abs{ x } \geq \tfrac{ P }{ 2 } - \epsilon,
\end{dcases}
\end{equation*}
for some fixed~\({ \epsilon > 0 }\), and~\({ A_P^{} \coloneqq \sqrt{2 \mu} / \norm{ \chi_P^{} u_P^{\star \, \textnormal{\textsc{t}}}  }_{ 0 } }\), so that
\begin{equation*}
\widetilde{ u }_P \in U_\mu^s  \qquad \text{and} \qquad \support \widetilde{ u }_P \subseteq \Set*{ \abs{ x } \leq \tfrac{P}{2} - \epsilon } \subset \bigl( - \tfrac{P}{2}, \tfrac{P}{2} \bigr).
\end{equation*}
Moreover, let~\({ u_P \coloneqq  \sum_{ j \in \Z } \widetilde{ u }_P( \cdot + jP) \in \textnormal{H}_P^s }\) be the periodisation of~\({ \widetilde{ u }_P }\); see~\cref{fig:min-seq-relationships} for~illustration.

\begin{figure}[h!]
	\vspace*{-.5em}%
    \centering%
    \begin{subfigure}[b]{0.32\textwidth}%
    \centering%
    \includegraphics{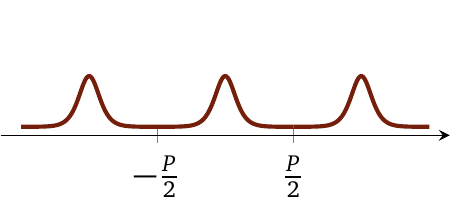}
        \caption{\({ u_P^{\star \, \textnormal{\textsc{t}}} = u_P^\star \left(\cdot + x_P^{} + \tfrac{ P }{ 2 }\right) }\).}
    \end{subfigure}
    ~ 
    \begin{subfigure}[b]{0.32\textwidth}%
    \centering%
    \includegraphics{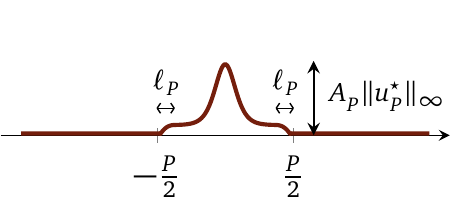}
        \caption{\({ \widetilde{ u }_P^{} }\).}
    \end{subfigure}
    ~ 
    \begin{subfigure}[b]{0.32\textwidth}%
    \centering%
    \includegraphics{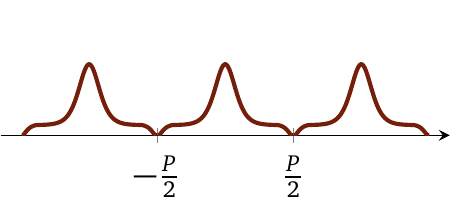}
        \caption{\({ u_P^{} }\).}
    \end{subfigure}
    \caption{Illustrating the relationship between the periodic traveling waves~\({ u_P^\star }\) (real profile unknown), the truncated functions~\({ \widetilde{ u }_P }\) converging to a solitary wave as~\({ P \to \infty }\), and the periodisations~\({ u_P }\) of~\({ \widetilde{ u }_P }\).}\label{fig:min-seq-relationships}
\end{figure}

Intuitively, the more nonlocal \({ L }\)~is---in the sense of \enquote{distributing mass} of~\({ \widetilde{ u }_P }\) from~\({ \bigl( - \frac{P}{2}, \frac{P}{2} \bigr) }\) into its complement---the~faster \({ \ell_P }\) likely should grow, because~\({ \widetilde{ u }_P }\) is asymptotically negligible outside of~\({ \Set*{ \abs{ x } \leq \tfrac{ P }{ 2 } - \ell_P } }\). In~our case, it~suffices in~fact to let~\({ \ell_P  \coloneqq \ell_\star}\) be constant for all~\({ P \geq P_\mu }\). Note~that~\autocite{EGW2012} used~\({ \ell_P \sim P^{\frac{ 1 }{ 4 }} }\).

The \emph{special minimising sequence~\({ \Set{ \widetilde{ u }_k }_{k \in \N} }\)} is now defined as~\({ \widetilde{ u }_k \coloneqq \widetilde{ u }_{P_k} }\), where~\({ \Set{ P_k }_k }\) is an increasing, unbounded sequence with~\({ P_0 \geq P_\mu }\). And in the following results extending~\autocite[Theorem~3.8]{EGW2012}, we show that~\({ \Set{ \widetilde{ u }_k }_k }\) does indeed minimise~\({ \mathcal{E} }\) over~\({ U_\mu^s }\), resembles~\({ u_P^\star }\) with~\({ \norm{ \widetilde{ u }_k^{} }_{ s }^2 \eqsim \mu }\), and approximates the traveling-wave equation~\eqref{eq:traveling-wave} in~\({ \textnormal{H}^s }\). For~convenience, put~\({ \Omega_P^{ \textnormal{\textsc{t}}} \coloneqq \Set*{ \tfrac{ P }{ 2 } - \ell_\star < \abs{ x } <  \tfrac{ P }{ 2 }} }\), so that by~construction,~\({ \norm{ u_P^{\star \, \textnormal{\textsc{t}}} }_{ \textnormal{H}^s(\Omega_P^{ \textnormal{\textsc{t}}}) } \to 0 }\) as~\({ P \to \infty }\).

\begin{lemma} \label{thm:HsP-convergence-special-min-seq}
\({ \norm{ u_P^{} - u_P^{\star \, \textnormal{\textsc{t}}} }_{ \textnormal{H}_P^s } \to 0 }\)\quad and \quad \({ \norm{ \mathcal{E}_P'(u_P^{} )  - \mathcal{E}_P'(u_P^{\star \, \textnormal{\textsc{t}}}) }_{ \textnormal{H}_P^s } \to 0}\)\quad as~\({ P \to \infty }\).
\end{lemma}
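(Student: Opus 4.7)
The plan is to reduce both convergences to localized estimates on a single period~\({ \bigl(-\tfrac{P}{2}, \tfrac{P}{2}\bigr) }\). Since~\({ \support \widetilde{u}_P \subseteq \Set*{ \abs{x} \leq \tfrac{P}{2} - \epsilon } }\) lies strictly inside one period, the shifted copies~\({ \widetilde{u}_P(\cdot + jP) }\) with~\({ j \neq 0 }\) vanish on~\({ \bigl(-\tfrac{P}{2}, \tfrac{P}{2}\bigr) }\), and so~\({ u_P \equiv \widetilde{u}_P }\) there. Consequently,
\begin{equation*}
u_P^{} - u_P^{\star \, \textnormal{\textsc{t}}} = \bigl(A_P^{} \chi_P^{} - 1\bigr) \, u_P^{\star \, \textnormal{\textsc{t}}} \qquad \text{on } \bigl(-\tfrac{P}{2}, \tfrac{P}{2}\bigr),
\end{equation*}
and via the equivalence~\({ \norm{\cdot}_{\textnormal{H}_P^s} \eqsim \norm{\cdot}_{\textnormal{H}^s(-P/2,\, P/2)} }\) from~\cref{sec:preliminaries-spaces}, it suffices to bound the right-hand side in the latter norm.

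Next, split~\({ A_P \chi_P - 1 = (A_P - 1) \chi_P + (\chi_P - 1) }\) and treat the summands separately. First, \({ A_P \to 1 }\) because
\begin{equation*}
\norm{ \chi_P^{} u_P^{\star \, \textnormal{\textsc{t}}} }_{0}^{2} = 2\mu - \int_{\Omega_P^{\textnormal{\textsc{t}}}} \bigl(1 - \chi_P^{2}\bigr) \abs{u_P^{\star \, \textnormal{\textsc{t}}}}^{2} \dee x \xrightarrow[P \to \infty]{} 2\mu,
\end{equation*}
the convergence following from~\({ \norm{ u_P^{\star \, \textnormal{\textsc{t}}} }_{\textnormal{H}^s(\Omega_P^{\textnormal{\textsc{t}}})} \to 0 }\). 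Moreover, the transition region of~\({ \chi_P }\) has fixed width~\({ \ell_\star - \epsilon }\), so all its derivatives are bounded uniformly in~\({ P }\); a standard product estimate in~\({ \textnormal{H}^s }\) (Leibniz' rule for integer~\({ s }\), combined with the finite-difference characterisation for fractional~\({ s }\)) then gives~\({ \norm{ \chi_P^{} u_P^{\star \, \textnormal{\textsc{t}}} }_{\textnormal{H}^s(-P/2,\, P/2)} \lesssim \norm{ u_P^{\star \, \textnormal{\textsc{t}}} }_{\textnormal{H}_P^s} \lesssim \mu^{1/2} }\), uniformly in~\({ P }\), so the first summand vanishes in~\({ \textnormal{H}_P^s }\). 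Since~\({ (\chi_P - 1) u_P^{\star \, \textnormal{\textsc{t}}} }\) is supported in~\({ \Omega_P^{\textnormal{\textsc{t}}} }\), the same product estimate applied locally bounds its~\({ \textnormal{H}^s(-P/2,\, P/2) }\) norm by a constant multiple of~\({ \norm{ u_P^{\star \, \textnormal{\textsc{t}}} }_{\textnormal{H}^s(\Omega_P^{\textnormal{\textsc{t}}})} \to 0 }\), completing the first claim.

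For the derivative, decompose
\begin{equation*}
\mathcal{E}_P'(u_P^{}) - \mathcal{E}_P'(u_P^{\star \, \textnormal{\textsc{t}}}) = -L\bigl(u_P^{} - u_P^{\star \, \textnormal{\textsc{t}}}\bigr) - \bigl(n(u_P^{}) - n(u_P^{\star \, \textnormal{\textsc{t}}})\bigr).
\end{equation*}
Continuity of~\({ L \colon \textnormal{H}_P^s \to \textnormal{H}_P^{s + \abs{\sigma}} \hookrightarrow \textnormal{H}_P^s }\) reduces the linear term to the first claim. For the nonlinear term, both~\({ u_P = \widetilde{u}_P }\) on a single period and~\({ u_P^{\star \, \textnormal{\textsc{t}}} }\) are uniformly bounded in~\({ \textnormal{L}^\infty }\) by~\({ \mu^{1/2} }\), as established at the end of~\cref{sec:periodic}. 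Starting from the integral representation
\begin{equation*}
n(u_P^{}) - n(u_P^{\star \, \textnormal{\textsc{t}}}) = \bigl(u_P^{} - u_P^{\star \, \textnormal{\textsc{t}}}\bigr) \int_0^1 n'\bigl(u_P^{\star \, \textnormal{\textsc{t}}} + t(u_P^{} - u_P^{\star \, \textnormal{\textsc{t}}})\bigr) \dee t,
\end{equation*}
a Moser-type product inequality combined with the fractional chain rule (\Cref{thm:chain-rule-HsP}) applied to~\({ n' }\)---or, in the regime~\({ s \leq \tfrac{1}{2} }\), the global Lipschitz bound~\({ \norm{ n' }_\infty \lesssim \mu^{\theta q} }\) arising from the cut-off in~\cref{sec:nonlinearity}---yields
\begin{equation*}
\norm{ n(u_P^{}) - n(u_P^{\star \, \textnormal{\textsc{t}}}) }_{\textnormal{H}_P^s} \lesssim \bigl( \norm{ u_P^{} }_{\infty}^{q} + \norm{ u_P^{\star \, \textnormal{\textsc{t}}} }_{\infty}^{q} \bigr) \norm{ u_P^{} - u_P^{\star \, \textnormal{\textsc{t}}} }_{\textnormal{H}_P^s},
\end{equation*}
which combines with the first claim to complete the proof. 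The main obstacle is this Lipschitz-in-\({\textnormal{H}_P^s}\) estimate for~\({ n }\), since~\Cref{thm:chain-rule-HsP} addresses only~\({ n(u) }\) itself rather than differences; one has to adapt the chain-rule reasoning, or in the cut-off regime exploit~\eqref{eq:periodicHs-difference-norm} directly together with the uniform global Lipschitz bound on~\({ n }\).
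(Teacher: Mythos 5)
Your treatment of the first claim is correct and is a minor algebraic variation of the paper's: the paper splits the integral domain into \({ \bigl(-\tfrac{P}{2},\tfrac{P}{2}\bigr) \setminus \Omega_P^{\textnormal{\textsc{t}}} }\) and \({ \Omega_P^{\textnormal{\textsc{t}}} }\), while you split the multiplier \({ A_P\chi_P - 1 = (A_P-1)\chi_P + (\chi_P - 1) }\); both hinge on the same two ingredients, \({ A_P \to 1 }\) and \({ \norm{u_P^{\star\,\textnormal{\textsc{t}}}}_{\textnormal{H}^s(\Omega_P^{\textnormal{\textsc{t}}})} \to 0 }\), together with uniform bounds on the derivatives of \({ \chi_P }\). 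The linear part of the second claim is also exactly as in the paper.

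The nonlinear part, however, has a genuine gap, and you flag it yourself but do not close it. Your strategy is to prove a \emph{Lipschitz estimate} for the composition operator,
\begin{equation*}
\norm{ n(u) - n(v) }_{\textnormal{H}_P^s} \lesssim \bigl( \norm{u}_\infty^q + \norm{v}_\infty^q \bigr) \norm{ u - v }_{\textnormal{H}_P^s},
\end{equation*}
by applying \Cref{thm:chain-rule-HsP} to \({ n' }\) inside an integral representation. But under Assumption~\ref{assumption:nonlinearity} one has \({ n' \in \textnormal{C}^{\varsigma-1}_{\textnormal{loc}} }\), so the chain rule for \({ n' }\) holds only for \({ s < \varsigma - 1 }\); the paper explicitly allows \({ s }\) up to \({ \varsigma }\), and for \({ s \in (\varsigma-1,\varsigma) }\) your argument has nothing to say about \({ \norm{\int_0^1 n'(\cdot)\,\dee t}_{\textnormal{H}_P^s} }\). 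The cut-off regime \({ s \leq \tfrac{1}{2} }\) does not rescue this either: a \emph{global Lipschitz bound} on \({ n }\) yields \emph{boundedness} of \({ u \mapsto n(u) }\) on \({ \textnormal{H}^s }\) (since \({ \norm{\upDelta_h n(u)}_0 \leq \norm{n'}_\infty \norm{\upDelta_h u}_0 }\)), but not Lipschitz continuity of the map: estimating \({ \norm{\upDelta_h\bigl(n(u) - n(v)\bigr)}_0 }\) is a second-order difference of \({ n }\) and requires Hölder continuity of \({ n' }\), which the \({ \textnormal{Lip}_{\textnormal{loc}} }\) case does not provide. This is a well-known subtlety for superposition operators in fractional Sobolev spaces, and it is exactly what the paper's proof is designed to avoid. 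The paper never proves a Lipschitz estimate for \({ n }\); instead it decomposes the period: on \({ \bigl(-\tfrac{P}{2},\tfrac{P}{2}\bigr) \setminus \Omega_P^{\textnormal{\textsc{t}}} }\) one has \({ u_P = A_P u_P^{\star\,\textnormal{\textsc{t}}} }\) so that \({ n(u_P) - n(u_P^{\star\,\textnormal{\textsc{t}}}) }\) and its integer derivatives converge pointwise a.e.\ to \({ 0 }\) (since \({ A_P \to 1 }\)) with a uniform dominating function, while on \({ \Omega_P^{\textnormal{\textsc{t}}} }\) the triangle inequality reduces matters to \({ \norm{n(u_P^{\star\,\textnormal{\textsc{t}}})}_{\textnormal{H}^s(\Omega_P^{\textnormal{\textsc{t}}})} }\), which vanishes by the chain rule applied to \({ n }\) \emph{itself} together with \({ \norm{u_P^{\star\,\textnormal{\textsc{t}}}}_{\textnormal{H}^s(\Omega_P^{\textnormal{\textsc{t}}})} \to 0 }\). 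To repair your proof, replace the final step with this domain decomposition rather than trying to adapt the chain rule to differences.
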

\begin{proof}
Since~\({ A_P \to 1 }\), we find that
\begin{equation*}
\norm{ u_P^{} - u_P^{\star \, \textnormal{\textsc{t}}} }_{ \textnormal{L}_P^2 }^2 = \abs{ A_P^{} - 1 }^2 \smashoperator{\int_{ \left( - \frac{P}{2}, \frac{P}{2} \right) \setminus \Omega_P^{ \textnormal{\textsc{t}}} }} \abs{ u_P^{\star \, \textnormal{\textsc{t}}} }^2 \dee x +  \smashoperator{\int_{\vphantom{\left( - \frac{P}{2}, \frac{P}{2} \right)} \Omega_P^{ \textnormal{\textsc{t}}}}} \abs*{ (A_P^{} \chi_P^{} - 1) \, u_P^{\star \, \textnormal{\textsc{t}}} }^2 \dee x \xrightarrow[ P \to \infty ]{  } 0, 
\end{equation*}
because the first integral is less than~\({ \norm{ u_P^{\star \, \textnormal{\textsc{t}}} }_{ \textnormal{L}_P^2 }^2 = 2 \mu }\) whereas the latter is~\({\lesssim \norm{ u_P^{\star \, \textnormal{\textsc{t}}} }_{ \textnormal{L}^2(\Omega_P^{ \textnormal{\textsc{t}}}) }^2 }\), which vanishes. In~a straightforward manner, this extends to~\({ \textnormal{H}_P^s }\) with help of~\eqref{eq:periodicHs-difference-norm}, Leibniz' rule (\({ \lfloor s \rfloor }\)~times) plus the fact that~\({ \norm{ \chi_P^{(i)} }_{ \infty } \lesssim \ell_\star^{-i} \lesssim 1 }\) uniformly in~\({ P }\).

With the first result established, we then find that
\begin{equation*}
\norm{ \mathcal{L}_P'(u_P^{} )  - \mathcal{L}_P'(u_P^{\star \, \textnormal{\textsc{t}}}) }_{ \textnormal{H}_P^s } =  \norm{ L(u_P^{} -  u_P^{\star \, \textnormal{\textsc{t}}})}_{ \textnormal{H}_P^s } \leq \mathfrak{m}(0) \norm{ u_P^{} -  u_P^{\star \, \textnormal{\textsc{t}}}}_{ \textnormal{H}_P^s } \xrightarrow[ P \to \infty ]{  } 0.
\end{equation*}
As regards
\begin{equation*}
\norm{ \mathcal{N}_P'(u_P^{} )  - \mathcal{N}_P'(u_P^{\star \, \textnormal{\textsc{t}}}) }_{ \textnormal{H}_P^s } = \norm{ n(u_P^{} )  - n(u_P^{\star \, \textnormal{\textsc{t}}}) }_{ \textnormal{H}_P^s },
\end{equation*}
observe first that
\begin{equation*}
\norm{ n(u_P^{} )  - n(u_P^{\star \, \textnormal{\textsc{t}}}) }_{ \textnormal{H}^s \left( - \frac{P}{2}, \frac{P}{2} \right) \setminus \Omega_P^{ \textnormal{\textsc{t}}} } \xrightarrow[ P \to \infty ]{  } 0,
\end{equation*}
essentially because~\({ A_P \to 1 }\). Specifically, one may argue by the chain rule and dominated convergence---a~linear combination of~\({ n(u_P^{\star \, \textnormal{\textsc{t}}}) }\) and its \({ \lfloor s \rfloor }\)~derivatives, all of which are uniformly bounded in~\({ \textnormal{L}_P^2 }\), serves as a dominating~function---because~\({ \left(u_P^{} - u_P^{\star \, \textnormal{\textsc{t}}} \right) 1_{\left( - \frac{P}{2}, \frac{P}{2} \right) \setminus \Omega_P^{ \textnormal{\textsc{t}}}} }\) and its \({ \lfloor s \rfloor }\)~derivatives converge pointwise to~\({ 0 }\)~a.e.\@ as~\({ P \to \infty }\), and hence, also
\begin{equation*}
\frac{\dee{}^i  }{\dee x^i}  \left[n(u_P(x)) - n(u_P^{\star \, \textnormal{\textsc{t}}}(x)) \right] 1_{\left( - \frac{P}{2}, \frac{P}{2} \right) \setminus \Omega_P^{ \textnormal{\textsc{t}}}}(x) \xrightarrow[ P \to \infty ]{ \textnormal{a.e.} } 0
\end{equation*}
for all~\({i = 0, \dotsc, \lfloor s \rfloor }\). Moreover,
\begin{equation*}
 \norm{ n(u_P^{} )  - n(u_P^{\star \, \textnormal{\textsc{t}}}) }_{ \textnormal{H}^s ( \Omega_P^{ \textnormal{\textsc{t}}}) } \leq \norm{ n(u_P) }_{ \textnormal{H}^s ( \Omega_P^{ \textnormal{\textsc{t}}})  } + \norm{ n(u_P^{\star \, \textnormal{\textsc{t}}}) }_{ \textnormal{H}^s ( \Omega_P^{ \textnormal{\textsc{t}}}) }.
\end{equation*}
On the right-hand side, the first term is controlled by the latter, rigorously due to Leibniz' rule and~\({ A_P }\) being bounded. And, arguing similarly as~\eqref{eq:chain-rule-HsP-proof}, we also have
\begin{equation*}
\norm{ n(u_P^{\star \, \textnormal{\textsc{t}}}) }_{ \textnormal{H}^s ( \Omega_P^{ \textnormal{\textsc{t}}}) } \lesssim \norm{ u_P^{\star \, \textnormal{\textsc{t}}} }_{ \textnormal{L}^{ \infty}( \Omega_P^{ \textnormal{\textsc{t}}}) }^q \norm{ u_P^{\star \, \textnormal{\textsc{t}}} }_{ \textnormal{H}^s ( \Omega_P^{ \textnormal{\textsc{t}}}) } \xrightarrow[ P \to \infty ]{  } 0,
\end{equation*}
with \({ \norm{ u_P^{\star \, \textnormal{\textsc{t}}} }_{ \textnormal{L}^{ \infty}( \Omega_P^{ \textnormal{\textsc{t}}}) } \leq \norm{ u_P^{\star} }_{ \infty } \lesssim \norm{ u_P^{\star} }_{ \textnormal{H}_P^s} < R  }\). Hence, \({ \norm{ \mathcal{N}_P'(u_P^{} )  - \mathcal{N}_P'(u_P^{\star \, \textnormal{\textsc{t}}}) }_{ \textnormal{H}_P^s } \to 0 }\) as \({ P \to \infty }\), and the proof is complete.
\end{proof}

\begin{proposition} \label{thm:convergence-periodic-solitary-infimum}
\({ \Set{ \widetilde{ u }_k }_k }\) is a minimising sequence for~\({ \mathcal{E} }\) over~\({ U_\mu^s }\), and
\begin{equation*}
I_{P, \mu} \xrightarrow[ P \to \infty ]{  } I_\mu,
\end{equation*}
where~\({ I_{P, \mu}^{} \coloneqq \mathcal{E}_P^{}(u_P^{\star \, \textnormal{\textsc{t}}}) }\) is the minimum of the periodic problem.
\end{proposition}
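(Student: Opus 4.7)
The plan splits into proving the lower bound \({ \liminf_{P \to \infty} I_{P, \mu} \geq I_\mu }\) and the upper bound \({ \limsup_{P \to \infty} I_{P, \mu} \leq I_\mu }\); the minimising property of \({ \{ \widetilde{u}_k \}_k }\) will fall out along the way. First I would verify that \({ \widetilde{u}_P \in U_\mu^s }\) for all sufficiently large \({ P }\): the equality \({ \mathcal{Q}(\widetilde{u}_P) = \mu }\) is built into the construction through \({ A_P }\), while \({ \| \widetilde{u}_P \|_s \lesssim \| u_P^\star \|_{ \textnormal{H}_P^s} \eqsim \mu^{1/2} }\) uniformly in \({ P }\) follows from \Cref{thm:bound-HsP-norm-mu} together with the equivalence~\eqref{eq:localizing-HsP-in-Hs}, so the open-ball condition holds whenever \({ \mu }\) is sufficiently small.

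For the lower bound I would chain three convergences. \Cref{thm:approx-periodic-solitary} applied to the bounded family \({ \widetilde{u}_P }\) and its periodisation \({ u_P }\) yields \({ \mathcal{E}(\widetilde{u}_P) - \mathcal{E}_P(u_P) \to 0 }\). Next, \Cref{thm:HsP-convergence-special-min-seq} gives \({ \| u_P - u_P^{\star \, \textnormal{\textsc{t}}} \|_{ \textnormal{H}_P^s} \to 0 }\), and since \({ \mathcal{E}_P \in \textnormal{C}^1 }\) by \Cref{thm:functional-props} with derivative uniformly bounded on bounded subsets of \({ \textnormal{H}_P^s }\), a mean-value argument produces \({ \mathcal{E}_P(u_P) - \mathcal{E}_P(u_P^{\star \, \textnormal{\textsc{t}}}) \to 0 }\). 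Finally, translation invariance of \({ \mathcal{E}_P }\) on periodic functions delivers \({ \mathcal{E}_P(u_P^{\star \, \textnormal{\textsc{t}}}) = \mathcal{E}_P(u_P^\star) = I_{P, \mu} }\). Concatenating gives \({ \mathcal{E}(\widetilde{u}_P) - I_{P, \mu} \to 0 }\); combined with \({ \mathcal{E}(\widetilde{u}_P) \geq I_\mu }\) this yields the lower bound.

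For the upper bound, given \({ \varepsilon > 0 }\) pick \({ v \in U_\mu^s }\) with \({ \mathcal{E}(v) < I_\mu + \varepsilon }\). By density of compactly supported functions in \({ \textnormal{H}^s }\), followed by a small \({ \textnormal{L}^2 }\)-rescaling to restore \({ \mathcal{Q}(v) = \mu }\) and using continuity of \({ \mathcal{E} }\), I may assume \({ v }\) is supported in some \({ (-M, M) }\), still with \({ v \in U_\mu^s }\) and \({ \mathcal{E}(v) < I_\mu + 2 \varepsilon }\). For \({ P > 2 M }\) the periodisation \({ v_P \coloneqq \sum_{j \in \Z} v(\cdot + jP) }\) coincides with \({ v }\) on \({ \bigl( - \tfrac{P}{2}, \tfrac{P}{2} \bigr) }\), so \({ \mathcal{Q}_P(v_P) = \mu }\) and \({ \| v_P \|_{ \textnormal{H}_P^s} \to \| v \|_s < R }\), placing \({ v_P \in U_{P, \mu}^s }\) eventually. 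Applying \Cref{thm:approx-periodic-solitary} to the constant family \({ \widetilde{u}_P \equiv v }\), which for \({ P > 2M }\) has the required support property, then yields \({ \mathcal{E}_P(v_P) \to \mathcal{E}(v) }\), whence \({ \limsup I_{P, \mu} \leq \mathcal{E}(v) < I_\mu + 2 \varepsilon }\). Letting \({ \varepsilon \to 0 }\) closes the upper bound, and combined with the relation \({ \mathcal{E}(\widetilde{u}_P) - I_{P, \mu} \to 0 }\) from the previous step shows that \({ \{ \widetilde{u}_k \}_k }\) is a minimising sequence for \({ \mathcal{E} }\) over \({ U_\mu^s }\).

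The main obstacle I anticipate is the density step in the upper bound: truncating an arbitrary \({ v \in U_\mu^s }\) to compact support, rescaling in \({ \textnormal{L}^2 }\) to restore the quadratic equality \({ \mathcal{Q}(v) = \mu }\), and checking that the resulting function still lies in the open ball \({ \{ \| \cdot \|_s < R \} }\) with only slightly larger energy. This is standard but requires simultaneously juggling an equality constraint and an open inequality constraint, so the rescaling has to be done after the truncation and with \({ R' }\) chosen large enough that the rescaling factor is close to one. Everything else is a concatenation of \Cref{thm:approx-periodic-solitary}, \Cref{thm:HsP-convergence-special-min-seq}, the \({ \textnormal{C}^1 }\)-regularity in \Cref{thm:functional-props}, and the \({ \mu^{1/2} }\) bound of \Cref{thm:bound-HsP-norm-mu}.
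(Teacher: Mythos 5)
Your proof is correct and follows essentially the same route as the paper: the lower bound comes from the decomposition \({ \mathcal{E}(\widetilde{u}_P) = (\mathcal{E}(\widetilde{u}_P) - \mathcal{E}_P(u_P)) + (\mathcal{E}_P(u_P) - \mathcal{E}_P(u_P^{\star\,\textsc{t}})) + I_{P,\mu} }\) combined with \Cref{thm:approx-periodic-solitary}, \Cref{thm:HsP-convergence-special-min-seq} and a mean-value/sup bound on \({ \mathcal{E}_P' }\), while the upper bound comes from periodising a compactly supported near-minimiser and invoking \Cref{thm:approx-periodic-solitary} again and then density. The only notable difference is that you spell out the truncate-then-rescale density step more carefully than the paper (which just cites \enquote{continuity and density}) and phrase the intermediate estimate as a mean-value argument over the line segment rather than a \({ \sup_{u} \|\mathcal{E}_P'(u)\|_{\textnormal{L}_P^2} }\) bound — arguably a slight tightening, since the segment between \({ u_P }\) and \({ u_P^{\star\,\textsc{t}} }\) does not stay on the constraint manifold \({ U_{P,\mu}^s }\) but only in the ball, which is exactly what a mean-value formulation needs.
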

\begin{proof}
Writing~\({ \mathcal{E}(\widetilde{ u }_P^{}) = \left( \mathcal{E}(\widetilde{ u }_P^{}) - \mathcal{E}_P^{}(u_P^{}) \right) + \left( \mathcal{E}_P^{}(u_P^{}) - \mathcal{E}_P^{}(u_P^{\star \, \textnormal{\textsc{t}}}) \right) + I_{P, \mu}^{} }\) and observing by~\Cref{thm:approx-periodic-solitary} and \Cref{thm:HsP-convergence-special-min-seq} that
\begin{equation*}
\mathcal{E}(\widetilde{ u }_P^{}) - \mathcal{E}_P^{}(u_P^{}) \xrightarrow[ P \to \infty ]{  } 0
\end{equation*}
and
\begin{equation*}
\mathcal{E}_P^{}(u_P^{}) - \mathcal{E}_P^{}(u_P^{\star \, \textnormal{\textsc{t}}}) \leq \sup_{ u \in U_{P, \mu}^s } \norm{ \mathcal{E}_P'(u) }_{ \textnormal{L}_P^2 } \norm{ u_P^{} - u_P^{\star \, \textnormal{\textsc{t}}} }_{ \textnormal{L}_P^2 }  \xrightarrow[ P \to \infty ]{  } 0,
\end{equation*}
we get
\begin{equation*}
I_{ \mu} \leq \liminf_{ P \to \infty } \mathcal{E}( \widetilde{ u }_P) = \liminf_{ P \to \infty } I_{P, \mu}.
\end{equation*}
Here we used that \({ \norm{ \mathcal{E}_P'(u) }_{ \textnormal{L}_P^2 } }\) is uniformly bounded over~\({ u \in U_{P, \mu}^s }\), since \({ \norm{ \mathcal{L}_P'(u) }_{ \textnormal{L}_P^2 } \leq \mathfrak{m}(0) \norm{ u }_{ \textnormal{L}_P^2 } \lesssim \mu  }\) and
\begin{equation*}
\norm{ \mathcal{N}_P'(u) }_{ \textnormal{L}_P^2 } = \norm{ n(u) }_{ \textnormal{L}_P^2 } \lesssim \norm{ u }_{ \textnormal{L}_P^2 } 
\begin{cases}
\mu^{ \theta q}  & \text{if } s \leq \tfrac{ 1 }{ 2 }; \\
\norm{ u }_{ \infty }^q & \text{if } s > \tfrac{ 1 }{ 2 },
\end{cases}
\end{equation*}
with~\({ \norm{ u }_{ \infty } \lesssim \norm{ u }_{ \textnormal{H}_P^s } < R }\).

Conversely, let~\({ \widetilde{ w } \in \textnormal{C}_{ \textnormal{c}}^{ \infty} }\) satisfy~\({ \mathcal{Q}( \widetilde{ w }) = \mu }\), and put~\({ w_P \coloneqq \sum_{ j \in \Z } \widetilde{ w }( \cdot + jP) }\), so that~\({ I_{P, \mu} \leq \mathcal{E}_P(w_P) }\) and~\({ \mathcal{E}_P(w_P) \to \mathcal{E}( \widetilde{ w }) }\) as~\({ P \to \infty }\) by~\Cref{thm:approx-periodic-solitary}. Then
\begin{equation*}
\limsup_{ P \to \infty } I_{P, \mu} \leq \mathcal{E}(\widetilde{ w }),
\end{equation*}
and consequently also
\begin{equation*}
\limsup_{ P \to \infty } I_{P, \mu} \leq  \inf \Set*{ \mathcal{E}(u) \given u \in \textnormal{C}_{ \textnormal{c}}^{ \infty} \cap U_\mu^s } = I_{ \mu}
\end{equation*}
by continuity of~\({ \mathcal{E} }\) and density.
\end{proof}
\begin{proposition} \label{thm:special-min-seq-for-E}
The special minimising sequence~\({ \Set{ \widetilde{ u }_k }_k }\) satisfies
\begin{equation*}
\sup\nolimits_{ k }^{} \norm{ \widetilde{ u }_k^{} }_{ s } \eqsim  \mu^{ \frac{ 1 }{ 2 }} \qquad \text{and} \qquad \norm{ \mathcal{E}'(\widetilde{ u }_k ) + \nu_k \mathcal{Q}'(\widetilde{ u }_k)}_{ s } \xrightarrow[ k \to \infty ]{  } 0, 
\end{equation*}
where~\({\!\nu_k \coloneqq \nu_{P_k} }\). In~fact, we may assume that~\({ \!\nu_k }\) does not depend on~\({ k }\).
\end{proposition}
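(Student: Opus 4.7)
The plan is to establish three things in order: (i)~the two-sided bound $\norm{\widetilde{u}_k}_s \eqsim \mu^{\frac{1}{2}}$, (ii)~the convergence $\norm{\Phi_k}_s \to 0$ where $\Phi_k \coloneqq \mathcal{E}'(\widetilde{u}_k) + \nu_k \mathcal{Q}'(\widetilde{u}_k)$, and (iii)~a subsequence argument that lets me replace $\nu_k$ by a constant.

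For~(i), the lower bound is immediate since $\widetilde{u}_k \in U_\mu^s$ forces $\norm{\widetilde{u}_k}_s \geq \norm{\widetilde{u}_k}_0 = \sqrt{2\mu}$. For the upper bound, I use $\widetilde{u}_k = A_{P_k}\chi_{P_k} u_{P_k}^{\star \, \textnormal{\textsc{t}}}$ together with $A_{P_k} \to 1$ (because $\norm{u_{P_k}^{\star \, \textnormal{\textsc{t}}}}_{\textnormal{L}^2(\Omega_{P_k}^{\textnormal{\textsc{t}}})} \to 0$ by construction) and the uniform bounds $\norm{\chi_P^{(i)}}_\infty \lesssim \ell_\star^{-i} \lesssim 1$. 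Leibniz's rule combined with~\eqref{eq:periodicHs-difference-norm}, exactly as in the argument behind~\eqref{eq:localizing-HsP-in-Hs}, then yields $\norm{\widetilde{u}_k}_s \lesssim \norm{u_{P_k}^\star}_{\textnormal{H}_{P_k}^s} \eqsim \mu^{\frac{1}{2}}$ via Lemma~\ref{thm:bound-HsP-norm-mu}.

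The heart of the proof is~(ii). Lemma~\ref{thm:bound-HsP-norm-mu} ensures the penaliser is inactive at $u_{P_k}^\star$ for small~$\mu$, so by translation invariance the pointwise identity $L_P u_{P_k}^{\star \, \textnormal{\textsc{t}}} + n(u_{P_k}^{\star \, \textnormal{\textsc{t}}}) - \nu_k u_{P_k}^{\star \, \textnormal{\textsc{t}}} = 0$ holds. I decompose $\Phi_k = \varphi_{P_k}\Phi_k + (1-\varphi_{P_k})\Phi_k$ using the smooth cutoff $\varphi_{P_k}$ from~\eqref{eq:localizing-HsP-in-Hs}. On $\support(1-\varphi_{P_k}) \subseteq \Set*{\abs{x} > P_k/2}$ we have $\widetilde{u}_k \equiv 0$ and so $(1-\varphi_{P_k})\Phi_k = -(1-\varphi_{P_k}) L\widetilde{u}_k$, whose $\textnormal{H}^s$-norm tends to~$0$ by Proposition~\ref{thm:approx-periodic-solitary}. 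For the interior piece I subtract the vanishing shifted periodic Euler--Lagrange residual times $\varphi_{P_k}$ to obtain
\begin{equation*}
\varphi_{P_k}\Phi_k = \varphi_{P_k}\bigl(L_P u_{P_k}^{\star \, \textnormal{\textsc{t}}} - L\widetilde{u}_k\bigr) + \varphi_{P_k}\bigl(n(u_{P_k}^{\star \, \textnormal{\textsc{t}}}) - n(\widetilde{u}_k)\bigr) + \nu_k \varphi_{P_k}\bigl(\widetilde{u}_k - u_{P_k}^{\star \, \textnormal{\textsc{t}}}\bigr),
\end{equation*}
and treat each term by inserting $\pm u_{P_k}$, $\pm L_P u_{P_k}$ and $\pm n(u_{P_k})$: the $L\widetilde{u}_k$--$L_P u_{P_k}$ gap is handled by Proposition~\ref{thm:approx-periodic-solitary}, while the remaining $u_{P_k}$--$u_{P_k}^{\star \, \textnormal{\textsc{t}}}$ discrepancies are controlled by Lemma~\ref{thm:HsP-convergence-special-min-seq}. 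The uniform bounds $\norm{\varphi_{P_k}^{(i)}}_\infty \lesssim 1$ and the localisation argument from~\eqref{eq:chain-rule-HsP-proof} let me transfer these restriction estimates into a genuine $\textnormal{H}^s$-bound.

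Finally for~(iii), estimate~\eqref{eq:wavespeed-est-from-equation} combined with Lemma~\ref{thm:lower-bound-wave-speed-periodic} shows that $\Set*{\nu_k}$ is bounded in~$\R$, so along a subsequence $\nu_k \to \nu_\infty$. Step~(i) then yields $\norm{(\nu_k - \nu_\infty) \widetilde{u}_k}_s \lesssim \abs{\nu_k - \nu_\infty}\, \mu^{\frac{1}{2}} \to 0$, so the residual with $\nu_\infty$ in place of $\nu_k$ also vanishes in $\textnormal{H}^s$, and after relabelling I may assume $\nu_k \equiv \nu_\infty$. I~expect the main obstacle to be the stitching in~(ii): for fractional $s$ the $\textnormal{H}^s$-norm on $\R$ is nonlocal, which is why the inside/outside split must be performed smoothly (via $\varphi_{P_k}$ rather than a sharp indicator) and the boundary-layer contributions carefully absorbed using the uniform derivative bounds on $\varphi_{P_k}$ and $\chi_{P_k}$.
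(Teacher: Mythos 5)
Your proof is correct and follows essentially the same architecture as the paper's: obtain the two-sided bound from Lemma~\ref{thm:bound-HsP-norm-mu} and the construction of~\({\widetilde{u}_k}\), split the residual into an interior and an exterior piece, insert the periodic Euler--Lagrange residual as a zero-term, and reduce everything to Proposition~\ref{thm:approx-periodic-solitary} and Lemma~\ref{thm:HsP-convergence-special-min-seq}, then pass to a convergent subsequence of~\({\nu_k}\). The one place where you deviate is the inside/outside split in step~(ii): you multiply by the smooth cutoff~\({\varphi_{P_k}}\) and estimate~\({\norm{\varphi_{P_k}\Phi_k}_s + \norm{(1-\varphi_{P_k})\Phi_k}_s}\), whereas the paper bounds~\({\norm{\Phi_P}_s}\) directly by the two restriction norms~\({\norm{\Phi_P}_{\textnormal{H}^s(-P/2,P/2)} + \norm{\Phi_P}_{\textnormal{H}^s(\{\abs{x}>P/2\})}}\). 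Your concern that a sharp split would break for fractional~\({s}\) is not quite accurate: the~\({\textnormal{H}^s(\Omega)}\)-norm in~\cref{sec:preliminaries-spaces} is defined by restricting only the outer~\({x}\)-integral in the finite-difference characterisation (the shift~\({h}\) still ranges over~\({\abs{h}\leq\delta}\)), so the seminorm genuinely additively decomposes over disjoint sets and the paper's sharp split is legitimate. Your smooth-cutoff route costs a multiplier bound for~\({\varphi_{P_k}}\) on~\({\textnormal{H}^s}\) (available uniformly in~\({P}\) since~\({\tau}\) is fixed, cf.~the remarks after~\eqref{eq:chain-rule-HsP-proof}) but otherwise reaches the same place; neither version is more elementary than the other. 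One small caveat in your interior-term identity: it requires~\({u_{P_k}}\) and~\({\widetilde{u}_k}\) to coincide on the whole support of~\({\varphi_{P_k}}\), which extends slightly past~\({\pm P_k/2}\); this holds because~\({\tau}\) can be taken smaller than the support margin~\({\epsilon}\) of~\({\widetilde{u}_k}\), and you should flag that choice explicitly.
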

\begin{proof}
\Cref{thm:existence-periodic} and \Cref{thm:HsP-convergence-special-min-seq} directly imply
\begin{equation*}
\norm{ \widetilde{ u }_P^{} }_{ s } \eqsim \norm{ u_P^{} }_{ \textnormal{H}_P^s } \leq \norm{ u_P^{} - u_P^{\star \, \textnormal{\textsc{t}}} }_{ \textnormal{H}_P^s }  + \norm{ u_P^{\star \, \textnormal{\textsc{t}}} }_{ \textnormal{H}_P^s } \lesssim \mu^{ \frac{1}{2}}
\end{equation*}
for all~\({ P \geq P_\mu }\), where~\({ P_\mu }\) is replaced by a larger constant if~necessary. Furthermore,
\begin{equation*}
\norm*{ \mathcal{E}'(\widetilde{ u }_P^{} ) + \nu_P^{} \mathcal{Q}'(\widetilde{ u }_P^{})}_{ s } \leq \underbrace{\norm*{ \mathcal{E}'(\widetilde{ u }_P^{} ) + \nu_P^{} \mathcal{Q}'(\widetilde{ u }_P^{})}_{ \textnormal{H}^s \left( - \frac{P}{2}, \frac{P}{2} \right) }}_{ \eqqcolon \mathfrak{I}_1} + \underbrace{\norm*{ \mathcal{E}'(\widetilde{ u }_P^{}) + \nu_P^{} \mathcal{Q}'(\widetilde{ u }_P^{})}_{ \textnormal{H}^s \left( \Set*{ \abs{ x } > \frac{P}{2} } \right) }}_{ \eqqcolon \mathfrak{I}_2},
\end{equation*}
where
\begin{subequations}
\begin{align} \SwapAboveDisplaySkip
\mathfrak{I}_1 &\leq \norm*{ \mathcal{E}'(\widetilde{ u }_P^{} )  - \mathcal{E}_P'(u_P^{}) }_{ \textnormal{H}^s \left( - \frac{P}{2}, \frac{P}{2} \right) } + \nu_P^{} \underbrace{\norm*{ \mathcal{Q}'(\widetilde{ u }_P^{} )  - \mathcal{Q}_P'(u_P^{}) }_{ \textnormal{H}^s \left( - \frac{P}{2}, \frac{P}{2} \right) }}_{= 0} \label{eq:I1-line1} \\[1ex]
&\hphantom{\leq} + \norm*{ \mathcal{E}_P'(u_P^{} )  - \mathcal{E}_P'(u_P^{\star \, \textnormal{\textsc{t}}}) }_{ \textnormal{H}^s \left( - \frac{P}{2}, \frac{P}{2} \right) } + \nu_P^{} \norm*{ \mathcal{Q}'(u_P^{} )  - \mathcal{Q}_P'(u_P^{\star \, \textnormal{\textsc{t}}}) }_{ \textnormal{H}^s \left( - \frac{P}{2}, \frac{P}{2} \right) } \label{eq:I1-line2} \\[1.2ex]
&\hphantom{\leq} + \underbrace{\norm*{ \mathcal{E}_P'(u_P^{\star \, \textnormal{\textsc{t}}} ) + \nu_P^{} \mathcal{Q}_P'(u_P^{\star \, \textnormal{\textsc{t}}}) }_{ \textnormal{H}^s \left( - \frac{P}{2}, \frac{P}{2} \right) }}_{= 0}, \label{eq:I1-line3}
\end{align}
\end{subequations}
vanishes as~\({ P \to \infty }\) due to~\Cref{thm:approx-periodic-solitary} for~\eqref{eq:I1-line1}; \Cref{thm:HsP-convergence-special-min-seq} plus the fact that~\({ \!\nu_P^{} \mathcal{Q}_P' }\) is a continuous linear operator on~\({ \textnormal{H}_P^s }\)---using that~\({ \Set{ \nu_P }_P }\) is bounded---for~\eqref{eq:I1-line2}; \({ u_P^{\star \, \textnormal{\textsc{t}}} }\)~solving~\eqref{eq:traveling-wave} in~\({ \textnormal{H}_P^s }\) for~\eqref{eq:I1-line3},
and
\begin{equation*} 
\mathfrak{I}_2 = \norm*{ \mathcal{E}'(\widetilde{ u }_P^{}) }_{ \textnormal{H}^s \left( \Set*{ \abs{ x } > \frac{P}{2} } \right) } + \nu_P^{} \underbrace{\norm*{ \mathcal{Q}'(\widetilde{ u }_P^{}) }_{ \textnormal{H}^s \left( \Set*{ \abs{ x } > \frac{P}{2} } \right) }}_{= 0}
\end{equation*}
vanishes by~\Cref{thm:approx-periodic-solitary}.

Finally, since~\({ \Set{ \nu_k }_k }\) is bounded, it admits a convergent subsequence, and we therefore conclude, noting that~\({ \norm{ \mathcal{Q}'(\widetilde{ u }_k) }_{ s } = \norm{ \widetilde{ u }_k }_{ s }  }\) is uniformly~bounded in~\({ k }\).
\end{proof}

\section{Strict subadditivity and bounds in~\({ \textnormal{L}^\infty }\) and for the wave speed} \label{sec:subadditivity}

In this section we establish that~\({ \mu \mapsto I_\mu }\) is strictly subadditive~\eqref{eq:strictly-subadditive} on some interval~\({ (0, \mu_\star) }\) in order to rule out the case of dichotomy in Lion's principle, see~\cref{sec:concentration-compactness}, and along the way also obtain improved lower bounds for the wave speed and upper bounds in~\({ \textnormal{L}^\infty }\). In~fact, we prove that~\({ \mu \mapsto I_\mu }\) is \emph{strictly subhomogeneous} on~\({ (0, \mu_\star) }\), meaning that
\begin{equation} \label{eq:subhomogeneity}
I_{a \mu} < a I_\mu \qquad \text{whenever } 0 < \mu < a \mu < \mu_\star,
\end{equation}
which in turn implies strict subadditivity:
\begin{equation*}
I_{ \mu_1 + \mu_2} < \left( \tfrac{ \mu_1}{ \mu_2} + 1 \right) I_{ \mu_2} = \tfrac{ \mu_1}{ \mu_2} I_{ \frac{ \mu_2 }{ \mu_1} \cdot \mu_1} + I_{ \mu_2 } \leq I_{ \mu_1} + I_{ \mu_2}.
\end{equation*}

Observe that if the nonlinearity~\({ n }\) is homogeneous, then~\eqref{eq:subhomogeneity} follows directly from a scaling argument because~\({ \mathcal{E} }\) is homogeneous. In~the presence of~\({ \mathcal{N}_\textnormal{r} }\), however, we need that~\({ \mathcal{N}_\textnormal{r}(u) = o( \mu^{q \alpha}) }\). This would be guaranteed provided
\begin{equation} \label{eq:Linfty-improved-bound}
\norm{ u }_{ \infty } \lesssim \mu^\alpha
\end{equation}
holds uniformly for a minimising sequence, which as we shall see, is the case for the special minimising sequence~\({ \Set{ \widetilde{ u }_k }_k }\) in~\cref{sec:special-min-seq}.

As a first step toward~\eqref{eq:subhomogeneity} and~\eqref{eq:Linfty-improved-bound}, we require a \({ \mu }\)-dependent upper bound on~\({ I_\mu }\). Following~\autocite{EGW2012}, it seems natural to introduce the homogeneous, long-wave part~\({ \mathcal{E}_{ \textnormal{lw}} \coloneqq \mathcal{L}_{ \textnormal{lw}} + \mathcal{N}_q }\) of~\({ \mathcal{E} }\), where
\begin{equation*}
\mathcal{L}_{ \textnormal{lw}}(u) \coloneqq - \frac{\mathfrak{m}^{(2\ell)}(0)}{(2\ell)!} \int_{ \R } \abs{ u^{(\ell)}}^2 \dee x,
\end{equation*}
and consider scalings~\({ S_{ \textnormal{lw}}u \coloneqq \mu^\alpha u( \mu^\beta \cdot) }\) with~\({ \alpha, \beta > 0 }\). We~must have \({ 2 \alpha - \beta = 1 }\) in order for~\({ S_{ \textnormal{lw}} }\) to map~\({ U_1^s }\) into~\({ U_\mu^s }\) (for~\({ \mu }\) sufficiently small), whereas the condition \({ 2 \alpha + (2 \ell - 1) \beta = (2 + q) \alpha - \beta }\) arises naturally in balancing dispersion and nonlinear effects---that~is, \({ \mathcal{L}_{ \textnormal{lw}} }\) and~\({ \mathcal{N}_q }\). This yields
\begin{equation*}
\alpha = \frac{2 \ell}{4 \ell - q} \qquad \text{and} \qquad \beta = \frac{q}{4 \ell - q}.
\end{equation*}
If~\({ u \in U_1^{\ell + 1} }\), then a routine calculation using the scaling properties of~\({ \mathscr{F} }\) gives
\begin{equation} \label{eq:Energy-lw-scaling}
\mathcal{E}(S_{ \textnormal{lw}}u) +  \mathfrak{m}(0) \mu = \mu^{1 + q \alpha} \mathcal{E}_{ \textnormal{lw}}(u) + o(\mu^{1 + q \alpha}),
\end{equation}
noting that the last term encaptures the effects of~\({ \mathcal{N}_\textnormal{r} }\) and the Taylor remainder of~\({ \mathfrak{m} }\). Note that when~\({ s \leq \frac{ 1 }{ 2 } }\), we implicitly choose~\({ \mu }\) so small that~\({ S_{ \textnormal{lw}}u }\) does not see the cut-off~\eqref{eq:n-cutoff} in~\({ n }\)---this works because~\({ \alpha > \frac{ 1 }{ 2 } > \theta }\), where~\({ \theta }\) is as in~\eqref{eq:n-cutoff}. Almost verbatim from~\autocite[Corollary~3.4]{EGW2012}, we now obtain the following.

\begin{lemma} \label{thm:solitary-inf-upper-bound}
There exists a constant~\({ I_\star > 0 }\) such that, for all sufficiently small~\({ \mu }\),
\begin{align} 
I_\mu &< - \mathfrak{m}(0) \mu - I_\star \mu^{1 + q \alpha}  \label{eq:E-inf-estimate} \\
\shortintertext{and, uniformly over~\({ P \geq P_\mu }\),}
I_{P, \mu} &< - \mathfrak{m}(0) \mu - I_\star \mu^{1 + q \alpha}.  \label{eq:E-per-inf-estimate}
\end{align}
\end{lemma}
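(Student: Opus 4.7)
The proof rests on the scaling identity~\eqref{eq:Energy-lw-scaling} together with the construction of an admissible test function~\({u_0}\) for which the long-wave energy~\({\mathcal{E}_{\textnormal{lw}}}\) is strictly negative. Given such a \({u_0 \in \textnormal{C}_{\textnormal{c}}^{\infty}}\) with \({\mathcal{Q}(u_0) = 1}\) and \({\mathcal{E}_{\textnormal{lw}}(u_0) < 0}\), I would first verify that \({S_{\textnormal{lw}}u_0 \in U_\mu^s}\) for \({\mu}\) sufficiently small: the \({\mathcal{Q}}\)-constraint is exact by the choice of exponents~\({\alpha,\beta}\) (\({2\alpha - \beta = 1}\)), and a routine Fourier computation yields \({\norm{S_{\textnormal{lw}}u_0}_s^2 = \mu \int_{\R}\langle \mu^\beta \xi\rangle^{2s}|\widehat{u_0}(\xi)|^2\,\dee\xi \eqsim \mu}\), which is below \({R^2}\) for small \({\mu}\). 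Invoking~\eqref{eq:Energy-lw-scaling} with \({I_\star := \tfrac{1}{2}|\mathcal{E}_{\textnormal{lw}}(u_0)|}\) then gives
\begin{equation*}
I_\mu \leq \mathcal{E}(S_{\textnormal{lw}}u_0) = -\mathfrak{m}(0)\mu - 2I_\star\, \mu^{1 + q\alpha} + \smalloh\bigl(\mu^{1 + q\alpha}\bigr) < -\mathfrak{m}(0)\mu - I_\star\, \mu^{1 + q\alpha}
\end{equation*}
for all sufficiently small \({\mu > 0}\), establishing~\eqref{eq:E-inf-estimate}.

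To construct \({u_0}\), I would start from any \({\phi \in \textnormal{C}_{\textnormal{c}}^{\infty}}\) with \({\mathcal{Q}(\phi) = 1}\) satisfying \({\int_{\R} N_q(\phi)\,\dee x > 0}\). This is achievable in either case of Assumption~\ref{assumption:nonlinearity}: if \({n_q(x) = \gamma x|x|^q}\) with \({\gamma > 0}\), then \({N_q(x) = \tfrac{\gamma}{2+q}|x|^{2+q} > 0}\) on \({\R\setminus\{0\}}\) so any nontrivial \({\phi}\) works; if \({n_q(x) = \gamma|x|^{1+q}}\), then \({N_q}\) is odd and one picks \({\phi \geq 0}\) when \({\gamma > 0}\) (respectively \({\phi \leq 0}\) when \({\gamma < 0}\)) so that \({\int N_q(\phi)\,\dee x = \tfrac{\gamma}{2+q}\int \phi|\phi|^{1+q}\,\dee x > 0}\). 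The long-wave rescaling \({\phi_\lambda(x) := \lambda^{1/2}\phi(\lambda x)}\) preserves \({\mathcal{Q}}\) while, by a direct computation,
\begin{equation*}
\mathcal{E}_{\textnormal{lw}}(\phi_\lambda) = \lambda^{2\ell}\, \mathcal{L}_{\textnormal{lw}}(\phi) - \lambda^{q/2} \int_{\R} N_q(\phi)\,\dee x \;\xrightarrow[\lambda \to 0^+]{}\; -\infty,
\end{equation*}
since \({q/2 < 2\ell}\) thanks to \({q < 4\ell}\); fixing any sufficiently small \({\lambda}\) and putting \({u_0 := \phi_\lambda}\) completes the construction.

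For the periodic estimate~\eqref{eq:E-per-inf-estimate}, I would test \({I_{P,\mu}}\) against the periodisation \({v_{\mu,P} := \sum_{j \in \Z}(S_{\textnormal{lw}}u_0)(\cdot + jP)}\). Because \({u_0}\) is compactly supported, so is \({S_{\textnormal{lw}}u_0}\), and for all \({P}\) exceeding some \({P_\mu > 0}\) its support lies strictly inside \({\bigl(-\tfrac{P}{2},\tfrac{P}{2}\bigr)}\); \Cref{thm:approx-periodic-solitary} then gives \({\mathcal{Q}_P(v_{\mu,P}) = \mathcal{Q}(S_{\textnormal{lw}}u_0) = \mu}\) exactly, \({\norm{v_{\mu,P}}_{\textnormal{H}_P^s} \eqsim \norm{S_{\textnormal{lw}}u_0}_s < R}\), and \({\mathcal{E}_P(v_{\mu,P}) \to \mathcal{E}(S_{\textnormal{lw}}u_0)}\) as \({P \to \infty}\). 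Enlarging \({P_\mu}\) if necessary, the limit sits strictly below \({-\mathfrak{m}(0)\mu - I_\star\mu^{1 + q\alpha}}\) by the solitary bound, yielding~\eqref{eq:E-per-inf-estimate} uniformly in \({P \geq P_\mu}\). The only genuine subtlety is that when \({s \leq \tfrac{1}{2}}\) one must ensure that the cut-off \({\widetilde{n}}\) of~\eqref{eq:n-cutoff} is invisible to \({S_{\textnormal{lw}}u_0}\); this follows from the elementary bound \({\norm{S_{\textnormal{lw}}u_0}_\infty = \mu^\alpha \norm{u_0}_\infty}\) combined with \({\alpha > \tfrac{1}{2} > \theta}\), as already flagged in the discussion preceding the lemma, so the scaling identity may be applied to the original (uncut) nonlinearity.
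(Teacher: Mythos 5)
Your proposal is correct and follows essentially the same route as the paper: pick \(\varphi\in\textnormal{C}_{\textnormal{c}}^{\infty}\) with \(\mathcal{Q}(\varphi)=1\) and \(\mathcal{N}_q(\varphi)<0\) (arranged by sign choice depending on the form of \(n_q\)), rescale to long wavelengths to make \(\mathcal{E}_{\textnormal{lw}}\) negative using \(q/2<2\ell\), and feed the result into~\eqref{eq:Energy-lw-scaling}; for~\eqref{eq:E-per-inf-estimate} you test the periodisation of the scaled profile directly rather than citing \(\Cref{thm:convergence-periodic-solitary-infimum}\), but that is simply unwinding one direction of that proposition, so the content is identical. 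One small slip: \(\mathcal{E}_{\textnormal{lw}}(\phi_\lambda)=\lambda^{2\ell}\mathcal{L}_{\textnormal{lw}}(\phi)-\lambda^{q/2}\int N_q(\phi)\,\dee x\) does \emph{not} tend to \(-\infty\) as \(\lambda\to 0^+\); both terms tend to \(0\), with the negative one (of lower order \(q/2<2\ell\)) dominating, so \(\mathcal{E}_{\textnormal{lw}}(\phi_\lambda)\to 0^-\). Negativity for all sufficiently small \(\lambda>0\) is what you actually need, and that part is right, so the conclusion is unaffected.
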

\begin{proof}
Take any~\({ \varphi \in \textnormal{C}_{ \textnormal{c}}^\infty }\) with~\({ \mathcal{Q}( \varphi) = 1 }\) and define~\({ u = \sqrt{ \lambda} \varphi( \lambda \cdot) }\). Then
\begin{equation*}
\mathcal{E}_{ \textnormal{lw}}(u) = \lambda^{2 \ell} \mathcal{L}_{ \textnormal{lw}} (\varphi) + \lambda^{q / 2} \mathcal{N}_q( \varphi) < 0
\end{equation*}
for all sufficiently small~\({ \lambda }\) provided that~\({ q < 4 \ell }\) and~\({ \mathcal{N}_q( \varphi) < 0 }\), the latter of which holds under Assumption~\ref{assumption:nonlinearity} by choosing~\({ \varphi > 0 }\) if~\({ \gamma > 0 }\) and~\({ \varphi < 0 }\) if~\({ \gamma < 0 }\). Utilising~\eqref{eq:Energy-lw-scaling} and~\Cref{thm:convergence-periodic-solitary-infimum}, this establishes both~\eqref{eq:E-inf-estimate} and~\eqref{eq:E-per-inf-estimate} for sufficiently small~\({ \mu }\) and large~\({ P_\mu }\) with~\({ I_\star = - \tfrac{ 1 }{ 4 }\mathcal{E}_{ \textnormal{lw}}(u) }\),~say.
\end{proof}

With \Cref{thm:special-min-seq-for-E} and \Cref{thm:solitary-inf-upper-bound} at hand, we now restrict our attention to \enquote{special near-minimisers} \({ u \in U_\mu^s \cap \textnormal{L}^\infty }\) of~\({ \mathcal{E} }\) satisfying
\begin{equation} \label{eq:special-near-min}
\mathcal{E}(u) <  -\mathfrak{m}(0) \mu - I_\star \mu^{1 + q \alpha} \qquad \text{and} \qquad \norm{ \mathcal{E}'(u) + \nu \mathcal{Q}'(u) }_{ \textnormal{H}^s \cap \textnormal{L}^\infty } \lesssim \mu^M
\end{equation}
for some~\({ \nu \in \R }\) and large number~\({ M \geq \max \Set*{ \frac{1}{2} + q \alpha,  \frac{1}{2}(1 - q)^{-1} } }\) (with the last term present only when~\({ q < 1 }\)). Here \({ \norm{  }_{ \textnormal{H}^s \cap \textnormal{L}^\infty } \coloneqq \norm{  }_{ s } + \norm{  }_{ \infty }  }\). In~close analogy to \Cref{thm:lower-bound-wave-speed-periodic,thm:bound-HsP-norm-mu}, with help of the identity
\begin{equation} \label{eq:special-identity}
\nu u - L u = n(u) + \mathcal{E}'(u) + \nu \mathcal{Q}'(u),
\end{equation}
one obtains the following result.

\begin{proposition} \label{thm:wave-speed-lower-bound-solitary}
The estimates~\({ \norm{ u }_{ s } \eqsim \mu^{ \frac{ 1 }{ 2 }} }\) and
\begin{equation} \label{eq:wave-speed-lower-bound-solitary}
\nu - \mathfrak{m}(0) \gtrsim \mu^{q \alpha} + o\bigl( \norm{ u }_{ \infty }^q\bigr)
\end{equation}
hold uniformly over the set of special near minimisers~\eqref{eq:special-near-min}.
\end{proposition}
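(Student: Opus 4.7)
The plan is to mirror the proofs of \Cref{thm:lower-bound-wave-speed-periodic,thm:bound-HsP-norm-mu} in the solitary setting, now using identity~\eqref{eq:special-identity} in place of the Euler--Lagrange equation~\eqref{eq:penalized-eulerlagrange}. The small remainder \({ r \coloneqq \mathcal{E}'(u) + \nu \mathcal{Q}'(u) }\), which by assumption satisfies \({ \norm{ r }_{ \textnormal{H}^s \cap \textnormal{L}^\infty } \lesssim \mu^M }\) for large~\({ M }\), takes over the role played by the penaliser contribution \({ 2 \varrho' \bigl( \norm{u}_{\textnormal{H}_P^s}^2 \bigr) \langle \cdot \rangle_P^{2s} \widehat{u} }\) in~\eqref{eq:penalized-eulerlagrange} and will be absorbed as a lower-order error.

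For the wave-speed lower bound, I would test~\eqref{eq:special-identity} against~\({ u }\) in~\({ \textnormal{L}^2 }\) to obtain
\begin{equation*}
2 \nu \mu = \innerproduct*{Lu + n(u)}{u}_0 + \innerproduct{r}{u}_0,
\end{equation*}
and then apply the solitary analogue of~\eqref{eq:wave-speed-periodic-proof-identity}, namely
\begin{equation*}
\innerproduct*{Lu + n(u)}{u}_0 = -(2 + q) \mathcal{E}(u) + q \mathcal{L}(u) - \int_\R \bigl[ (2 + q) N(u) - u n(u) \bigr] \dee x.
\end{equation*}
The near-minimiser bound \({ -\mathcal{E}(u) > \mathfrak{m}(0) \mu + I_\star \mu^{1 + q \alpha} }\) from~\eqref{eq:special-near-min}, together with \({ \mathcal{L}(u) \geq - \mathfrak{m}(0) \mu }\) (because~\({ \mathfrak{m}(0) = \sup \mathfrak{m} }\)), produces a leading term \({ (2 + q) I_\star \mu^{1 + q \alpha} }\) on the right. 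Assumption~\ref{assumption:nonlinearity} forces \({ (2 + q) N(x) - x n(x) = o(\abs{x}^{2 + q}) }\) (respectively \({ \mathcal{O}(\mu^{\theta q}\abs{x}^2) }\) in the cut-off regime \({ s \leq \tfrac{1}{2} }\)), so that the integral term, divided by~\({ 2 \mu }\), is of order \({ o(\norm{u}_\infty^q) }\) via \({ \norm{u}_0^2 = 2 \mu }\). Finally, \({ \abs{\innerproduct{r}{u}_0} \lesssim \mu^M \norm{u}_0 \lesssim \mu^{M + 1/2} }\), and the condition \({ M \geq \tfrac{1}{2} + q \alpha }\) keeps this harmless. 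Dividing by~\({ 2 \mu }\) yields~\eqref{eq:wave-speed-lower-bound-solitary}.

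For the \({ \textnormal{H}^s }\)~estimate, I would take the \({ \textnormal{H}^s }\)~inner product of~\eqref{eq:special-identity} with~\({ u }\) to get \({ \nu \norm{u}_s^2 = \innerproduct{Lu}{u}_s + \innerproduct{n(u)}{u}_s + \innerproduct{r}{u}_s }\). Plancherel and \({ \abs{\mathfrak{m}(\xi)} \lesssim \langle \xi \rangle^\sigma }\) give \({ \abs{\innerproduct{Lu}{u}_s} \lesssim \norm{u}_{\textnormal{H}^{s + \sigma/2}}^2 }\); the fractional chain rule~\Cref{thm:chain-rule-Hs} combined with~\eqref{eq:n-cutoff-estimate} yields \({ \norm{n(u)}_s \lesssim C(\mu) \norm{u}_s }\) with \({ C(\mu) \to 0 }\) as \({ \mu \to 0 }\); and \({ \abs{\innerproduct{r}{u}_s} \lesssim \mu^M \norm{u}_s }\). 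Since the just-proved wave-speed bound ensures \({ \nu }\) stays uniformly bounded away from~\({ 0 }\) for small~\({ \mu }\), the \({ C(\mu) \norm{u}_s^2 }\) term can be absorbed into the left-hand side, leaving \({ \norm{u}_s^2 \lesssim \norm{u}_{\textnormal{H}^{s + \sigma/2}}^2 + \mu^M \norm{u}_s }\). Interpolation between~\({ \textnormal{L}^2 }\) and~\({ \textnormal{H}^s }\) when \({ s + \sigma/2 > 0 }\) (or the trivial bound \({ \norm{u}_{\textnormal{H}^{s + \sigma/2}} \leq \norm{u}_0 }\) otherwise) then gives \({ \norm{u}_s \lesssim \norm{u}_0 = \sqrt{2 \mu} }\); the reverse \({ \norm{u}_s \geq \norm{u}_0 }\) is automatic, completing \({ \norm{u}_s \eqsim \mu^{1/2} }\).

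The main obstacle is not the mechanics but the bookkeeping: one has to track three distinct error contributions---the near-minimiser remainder~\({ r }\), the Taylor remainder of~\({ n }\), and (for \({ s \leq \tfrac{1}{2} }\)) the cut-off error~\eqref{eq:n-cutoff-estimate}---and verify that each is strictly lower order than the leading~\({ \mu^{q \alpha} }\) and~\({ \mu^{1/2} }\) scales. This is precisely what forces the quantitative threshold \({ M \geq \max\Set*{ \tfrac{1}{2} + q \alpha,\, \tfrac{1}{2}(1 - q)^{-1} } }\) recorded in~\eqref{eq:special-near-min}, the second branch appearing when absorbing \({ \mu^M \norm{u}_s }\) in the low-regularity regime where \({ \norm{u}_s }\) itself has not yet been bounded independently.
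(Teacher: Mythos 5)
Your plan mirrors the route the paper intends---it simply notes that the result follows \enquote{in close analogy to~\Cref{thm:lower-bound-wave-speed-periodic,thm:bound-HsP-norm-mu}, with help of the identity~\eqref{eq:special-identity}}---and the algebraic mechanics are sound: testing~\eqref{eq:special-identity} against~\(u\) in~\(\textnormal{L}^2\), invoking the solitary analogue of~\eqref{eq:wave-speed-periodic-proof-identity}, then testing against~\(u\) in~\(\textnormal{H}^s\), with the remainder~\(r\) (bounded by~\(\mu^M\)) taking over the role of the penaliser contribution.

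There is, however, a genuine gap in the bookkeeping for the cut-off regime~\(s \leq \tfrac{1}{2}\). You claim that the integral term, divided by~\(2\mu\), is \(o(\norm{u}_\infty^q)\), and later that each error term is \enquote{strictly lower order than the leading~\(\mu^{q\alpha}\)}. Neither is correct for the cut-off contribution: as in the proof of~\Cref{thm:lower-bound-wave-speed-periodic}, the cut-off bound~\eqref{eq:n-cutoff-estimate} only gives \((2 + q) N(u) - u\,n(u) = \mathcal{O}(\mu^{\theta q}\abs{u}^2)\), so after division by~\(2\mu\) the integral term is \(\mathcal{O}(\mu^{\theta q})\) of \emph{undetermined sign}. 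Since \(\theta < \tfrac{1}{2} < \alpha\) forces \(\theta q < q\alpha\), the quantity~\(\mu^{\theta q}\) decays \emph{more slowly} than~\(\mu^{q\alpha}\) and therefore \emph{dominates} the leading term rather than being absorbable. The correct resolution is to bootstrap: the crude estimate \(\nu - \mathfrak{m}(0) > \tfrac{2+q}{2} I_\star \mu^{q\alpha} + \mathcal{O}(\mu^{\theta q}) + \mathcal{O}(\mu^{M - \frac{1}{2}})\) already shows that \(\nu\) is uniformly bounded away from~\(0\) for small~\(\mu\), which is all your~\(\textnormal{H}^s\)~argument needs to deliver \(\norm{u}_s \eqsim \mu^{1/2}\); the solitary analogue of~\eqref{eq:Linfty-HsP-trick} then gives \(\norm{u}_\infty \lesssim \mu^{1/2} < A_\mu\) for small~\(\mu\), so the cut-off is inactive on such~\(u\), and a second pass through the wave-speed computation yields the stated \(o(\norm{u}_\infty^q)\) form of~\eqref{eq:wave-speed-lower-bound-solitary}. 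A minor secondary point: the branch \(M \geq \tfrac{1}{2}(1-q)^{-1}\) of the threshold is not used in this proposition; it first enters in the proof of~\Cref{thm:bound-Hs-norm-mu} when closing the inequality~\eqref{eq:high-freq-comp-geometric} for~\(\norm{u_{\textnormal{hi}}}_\infty\).
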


Next we decompose~\({ u }\) into its low and high-frequency components~\({ u_{ \textnormal{lo}} }\) and~\({ u_{ \textnormal{hi}} }\), so that~\({ u_{ \textnormal{lo}} }\) picks up the KdV-type behaviour of~\({ \mathfrak{m} }\) around~\({ 0 }\) and the operator~\({ \!\nu - L }\) may be inverted in~\({ \textnormal{H}^s }\) with regards to~\({ u_{ \textnormal{hi}} }\). Specifically, choose~\({ \xi_0 > 0 }\) in the interval around~\({ 0 }\) where the expansion of~\({ \mathfrak{m} }\) in Assumption~\ref{assumption:dispersion}~\ref{assumption:dispersion-expansion} holds such that~\({ \mathfrak{m}(\xi) \leq \tau \mathfrak{m}(0) }\) for~\({ \abs{ \xi }\geq \xi_0 - \delta }\), where~\({ \tau \in (0, 1) }\) and~\({ 0 < \delta \ll \xi_0 }\), and define operators~\({f \mapsto f_{ \textnormal{lo}} }\) and~\({f \mapsto f_{ \textnormal{hi}} }\) by
\begin{equation} \label{eq:freq-decomp}
\widehat{  f_{ \textnormal{lo}}  } = \varphi \,  \widehat{  f  } \qquad \text{and} \qquad \widehat{  f_{ \textnormal{hi}}  } = (1 - \varphi) \, \widehat{  f  },
\end{equation}
where~\({ \varphi \in \textnormal{C}_{ \textnormal{c}}^\infty(\R \to [0, 1]) }\) equals~\({ 1 }\) for~\({ \abs{ \xi } \leq \xi_0 - \delta }\) and~\({ 0 }\) for~\({ \abs{ \xi } \geq \xi_0 }\). Now~\eqref{eq:special-identity} splits into
\begin{alignat}{2}
(\nu - L) u_{ \textnormal{lo}} &= n(u)_{ \textnormal{lo}} + {}&& (\mathcal{E}'(u) + \nu \mathcal{Q}'(u) )_{ \textnormal{lo}}  \label{eq:low-frequency-comp}\\
(\nu - L) u_{ \textnormal{hi}} &= n(u)_{ \textnormal{hi}} + {}&& (\mathcal{E}'(u) + \nu \mathcal{Q}'(u) )_{ \textnormal{hi}} , \label{eq:high-frequency-comp}
\end{alignat}
and this helps us to establish~\eqref{eq:Linfty-improved-bound}.

\begin{proposition} \label{thm:bound-Hs-norm-mu}
The estimate
\begin{equation*}
\norm{ u }_{ \infty } \lesssim \mu^\alpha
\end{equation*}
holds uniformly over the set of special near minimisers~\eqref{eq:special-near-min}.
\end{proposition}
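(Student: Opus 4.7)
The plan is to decompose $u = u_{\textnormal{lo}} + u_{\textnormal{hi}}$ from~\eqref{eq:freq-decomp} and estimate each piece separately, closing by Young's inequality. The crucial first step is to extract an $H^\ell$-type smoothing bound on $u_{\textnormal{lo}}$ from the energy. Since $\mathcal{L}(u) + \mathfrak{m}(0)\mu = \tfrac{1}{2}\int_{\R} \bigl(\mathfrak{m}(0) - \mathfrak{m}(\xi)\bigr) \abs{\widehat{u}(\xi)}^2 \dee \xi$, the near-minimum condition~\eqref{eq:special-near-min} combined with the pointwise bound $\abs{N(u(x))} \lesssim \norm{u}_\infty^q \abs{u(x)}^2$ (valid under Assumption~\ref{assumption:nonlinearity} once $\norm{u}_\infty \leq A_\mu$, which holds thanks to a preliminary $\mu^{1/2}$-estimate analogous to~\eqref{eq:Linfty-HsP-trick} together with $\theta < \tfrac{1}{2}$) yields
\begin{equation*}
\int_{\R} \bigl(\mathfrak{m}(0) - \mathfrak{m}(\xi)\bigr) \abs{\widehat{u}(\xi)}^2 \dee \xi \lesssim \norm{u}_\infty^q \mu.
\end{equation*}
On the low-frequency support $\abs{\xi} \leq \xi_0$, Assumption~\ref{assumption:dispersion}~\ref{assumption:dispersion-expansion} gives $\mathfrak{m}(0) - \mathfrak{m}(\xi) \eqsim \abs{\xi}^{2\ell}$, so $\bigl\| u_{\textnormal{lo}}^{(\ell)} \bigr\|_0^2 \lesssim \norm{u}_\infty^q \mu$.

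Since $u_{\textnormal{lo}}$ has compact Fourier support and hence belongs to every $H^k$, the Gagliardo--Nirenberg inequality then delivers the key fractional-power estimate
\begin{equation*}
\norm{u_{\textnormal{lo}}}_\infty \lesssim \norm{u_{\textnormal{lo}}}_0^{1 - \frac{1}{2\ell}} \bigl\| u_{\textnormal{lo}}^{(\ell)} \bigr\|_0^{\frac{1}{2\ell}} \lesssim \mu^{\frac{1}{2}} \norm{u}_\infty^{q/(4\ell)}.
\end{equation*}
For $u_{\textnormal{hi}}$, I would invert $\nu - L$ using that $\nu - \mathfrak{m}(\xi) \geq (1 - \tau)\mathfrak{m}(0) > 0$ on the support of $1 - \varphi$, reading~\eqref{eq:high-frequency-comp} in $H^s$ so the embedding $H^s \hookrightarrow L^\infty$ applies for $s > \tfrac{1}{2}$; combined with the chain rule~\Cref{thm:chain-rule-Hs} this gives $\norm{u_{\textnormal{hi}}}_\infty \lesssim \norm{u}_\infty^q \mu^{\frac{1}{2}} + \mu^M$, with the analogous $\mu^{\theta q + \frac{1}{2}} + \mu^M$ estimate in the cut-off regime $s \leq \tfrac{1}{2}$.

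Summing the two contributions and absorbing the harmless $\norm{u}_\infty^q \mu^{1/2}$ term into $\mu^{1/2}\norm{u}_\infty^{q/(4\ell)}$ (possible for $\norm{u}_\infty \leq 1$ since $q > q/(4\ell)$) leaves
\begin{equation*}
\norm{u}_\infty \lesssim \mu^{\frac{1}{2}} \norm{u}_\infty^{q/(4\ell)} + \mu^M.
\end{equation*}
Since $q/(4\ell) < 1$ (as $q < 4\ell$), Young's inequality with conjugate exponents $\bigl( \tfrac{4\ell}{q}, \tfrac{4\ell}{4\ell - q} \bigr)$ absorbs the first term to yield $\norm{u}_\infty \lesssim \mu^\alpha + \mu^M \lesssim \mu^\alpha$, the last step using $M \geq \alpha$ under the lower bound on $M$ imposed in~\eqref{eq:special-near-min}. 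The main obstacle is precisely this Young tuning: a naïve Bernstein bound $\norm{u_{\textnormal{lo}}}_\infty \lesssim \norm{u_{\textnormal{lo}}}_0 \lesssim \mu^{1/2}$ would stop the bootstrap at exponent $\tfrac{1}{2}$, whereas the fractional exponent $q/(4\ell)$ produced by Gagliardo--Nirenberg is exactly what matches the Young calculus to the sharp $\alpha = \frac{2\ell}{4\ell - q}$ and makes the argument work for all $q > 0$ rather than only $q \geq 1$ as in~\autocite{EGW2012}.
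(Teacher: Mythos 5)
Your argument is correct in its main thrust but follows a genuinely different route for the low-frequency component, and it contains one slip in the cut-off regime that is worth flagging.

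The key structural difference is how the smoothing bound on~\( u_{\textnormal{lo}} \) is extracted. The paper uses the low-frequency part of the Euler--Lagrange equation together with~\( \nu - \mathfrak{m}(\xi) \gtrsim \abs{\xi}^{2\ell} \) (which needs the wave-speed estimate from~\cref{thm:wave-speed-lower-bound-solitary}) to get \( \bigl\| u_{\textnormal{lo}}^{(2\ell)} \bigr\|_0 \lesssim \mu^{1/2}\norm{u_{\textnormal{lo}}}_\infty^q + \mu^M \), and then applies Gagliardo--Nirenberg with exponent~\( \tfrac{1}{4\ell} \). You instead exploit the energy near-minimum condition directly: since \( \mathcal{L}(u) + \mathfrak{m}(0)\mu = \tfrac12\int (\mathfrak{m}(0)-\mathfrak{m})\abs{\widehat{u}}^2 \geq 0 \) and \( \mathcal{E}(u) + \mathfrak{m}(0)\mu < 0 \) on the set of special near-minimisers, you get \( \int (\mathfrak{m}(0)-\mathfrak{m})\abs{\widehat{u}}^2 \lesssim \abs{\mathcal{N}(u)} \lesssim \norm{u}_\infty^q\mu \), which localised by \( \mathfrak{m}(0)-\mathfrak{m}(\xi)\eqsim\xi^{2\ell} \) on \( \support\varphi \) gives \( \bigl\|u_{\textnormal{lo}}^{(\ell)}\bigr\|_0^2 \lesssim \norm{u}_\infty^q\mu \), and then Gagliardo--Nirenberg with exponent~\( \tfrac{1}{2\ell} \). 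Both land on the same \( \norm{u_{\textnormal{lo}}}_\infty \lesssim \mu^{1/2}\norm{u}_\infty^{q/(4\ell)} \) (the factor of two in \( \ell \) versus \( 2\ell \) is compensated by the energy being quadratic). Your route is conceptually lighter: it does not invoke the Euler--Lagrange equation on low frequencies at all, and it avoids the paper's explicit case split on which of \( \norm{u_{\textnormal{lo}}}_\infty, \norm{u_{\textnormal{hi}}}_\infty \) dominates, replacing it with the simple absorption \( \norm{u}_\infty^q\mu^{1/2} \lesssim \norm{u}_\infty^{q/(4\ell)}\mu^{1/2} \) for \( \norm{u}_\infty \leq 1 \). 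The final Young/elementary-inequality closure is the same as the paper's~\eqref{eq:elementary-inequality}.

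Two small points. First, in the \( s \leq \tfrac12 \) regime you state \( \norm{u_{\textnormal{hi}}}_\infty \lesssim \mu^{\theta q + 1/2} + \mu^M \); this crude bound does not always suffice, since it requires \( \theta q + \tfrac12 \geq \alpha = \tfrac{2\ell}{4\ell-q} \), which forces \( \theta \geq \tfrac{1}{2(4\ell-q)} \), incompatible with \( \theta < \tfrac12 \) when \( 4\ell - q \leq 1 \). But once the preliminary estimate \( \norm{u}_\infty \lesssim \mu^{1/2} < A_\mu \) is in hand (as you already invoke for the low-frequency part), the cut-off is inactive and the sharp chain rule of \cref{thm:chain-rule-Hs} applies for \( s \leq \tfrac12 \) as well, giving the same \( \norm{u}_\infty^q\mu^{1/2} + \mu^M \) as in the \( s > \tfrac12 \) case; this is what the paper uses, and you should too. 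Second, you write \( \nu - \mathfrak{m}(\xi) \geq (1-\tau)\mathfrak{m}(0) \) on \( \support(1-\varphi) \), which implicitly assumes \( \nu \geq \mathfrak{m}(0) \). From~\cref{thm:wave-speed-lower-bound-solitary} one only obtains \( \nu \geq (\tau+\epsilon)\mathfrak{m}(0) \) for small \( \epsilon \), hence \( \nu - \mathfrak{m}(\xi) \geq \epsilon\mathfrak{m}(0) \); this is all that is needed for uniform invertibility of \( \nu - L \) on high frequencies, but your stated constant is too large.
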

\begin{proof}
Suppose first that the high-frequency component dominates in~\({ \textnormal{L}^\infty }\), that is, \({ \norm{ u_{ \textnormal{hi}} }_{ \infty } \geq \norm{ u_{ \textnormal{lo}} }_{ \infty } }\), so that in particular, \({ \norm{ u }_{ \infty } \lesssim \norm{ u_{ \textnormal{hi}} }_{ \infty }  }\). When~\({ s \leq \frac{ 1 }{ 2 } }\), it is not clear \textit{a~priori} that~\({ \norm{ u_{ \textnormal{hi}} }_{ \infty } \lesssim \norm{ u_{ \textnormal{hi}} }_{ s }  }\). It~turns out to be almost true, as can be seen as follows. Young's inequality gives \newpage 
\begin{equation*}
\norm{ n(u)_{ \textnormal{lo}} }_{ \infty } = \norm{ (\mathscr{F}^{-1}\varphi) \ast n(u) }_{ \infty } \leq \norm{ \mathscr{F}^{-1}\varphi }_{ \textnormal{L}^1 } \norm{ n(u) }_{ \infty } \lesssim \norm{ n(u) }_{ \infty }, 
\end{equation*}
and likewise
\begin{equation*}
\norm{ (\mathcal{E}'(u) + \nu \mathcal{Q}'(u) )_{ \textnormal{lo}} }_{ \infty } \lesssim  \norm{ \mathcal{E}'(u) + \nu \mathcal{Q}'(u) }_{ \infty } \lesssim \mu^M.
\end{equation*}
Hence
\begin{align*} \SwapAboveDisplaySkip
\norm{ n(u)_{ \textnormal{hi}} }_{ \infty } &\leq \norm{ n(u) }_{ \infty } + \norm{ n(u)_{ \textnormal{lo}} }_{ \infty } \\
&\lesssim \norm{ n(u) }_{ \infty } \\ 
&\lesssim \mu^{ \theta q} \norm{ u }_{ \infty } \\
&\lesssim \mu^{ \theta q} \norm{ u_{ \textnormal{hi}} }_{ \infty },
\end{align*}
using~\eqref{eq:n-cutoff-estimate}, and similarly
\begin{equation*}
\norm{ (\mathcal{E}'(u) + \nu \mathcal{Q}'(u) )_{ \textnormal{hi}} }_{ \infty } \lesssim \mu^M.
\end{equation*}
We find from~\eqref{eq:high-frequency-comp} that
\begin{align*} \SwapAboveDisplaySkip
(\nu - \mu^{ \theta q}) \norm{ u_{ \textnormal{hi}} }_{ \infty } &\lesssim \norm{ Lu_{ \textnormal{hi}} }_{ \infty } + \mu^M \\
&\lesssim \norm{ Lu_{ \textnormal{hi}} }_{ s + \abs{ \sigma } } + \mu^M \\
&\lesssim \norm{ u_{ \textnormal{hi}} }_{ s } + \mu^M,
\end{align*}
and so for~\({ \mu }\) small enough it follows that~\({ \norm{ u_{ \textnormal{hi}} }_{ \infty } \lesssim \norm{ u_{ \textnormal{hi}} }_{ s } + \mu^M }\) when~\({ s \leq \frac{ 1 }{ 2 } }\).

\enlargethispage{.5\baselineskip} 

\Cref{thm:wave-speed-lower-bound-solitary} next implies that~\({\!\nu \geq (\tau + \epsilon) \mathfrak{m}(0) }\) for all sufficiently small \({ \epsilon > 0 }\) and~\({ \mu }\). Hence \({\!\nu - \mathfrak{m}(\xi) \geq \epsilon \mathfrak{m}(0) }\) on~\({ \Set{ \abs{ \xi }\geq \xi_0 - \delta } \supseteq \support (1 - \varphi) }\), which means that the linear operator
\begin{equation*}
\mathscr{F}^{-1} \left[ (\nu - \mathfrak{m})^{-1} (1 - \varphi) \mathscr{F} \right] \colon \textnormal{H}^s \to \textnormal{H}^s
\end{equation*}
is uniformly bounded in norm over~\({\!\nu \geq  (\tau + \epsilon) \mathfrak{m}(0) }\). Consequently, \eqref{eq:high-frequency-comp} and the fractional chain rule~\eqref{eq:chain-rule-Hs} yield
\begin{align*} \SwapAboveDisplaySkip
\norm{ u_{ \textnormal{hi}} }_{ s } &\lesssim \norm{ n(u) }_{ s } + \norm{ \mathcal{E}'(u) + \nu \mathcal{Q}'(u) }_{ s } \\
&\lesssim \norm{ u }_{ s } \norm{ u }_{ \infty }^{ q }  + \mu^M \\
&\lesssim \mu^{ \frac{ 1 }{ 2 }} \norm{ u_{ \textnormal{hi}} }_{ \infty }^q + \mu^M,
\end{align*}
and therefore also
\begin{equation} \label{eq:high-freq-comp-geometric}
\norm{ u_{ \textnormal{hi}} }_{ \infty } \lesssim \mu^{ \frac{1}{2}} \norm{ u_{ \textnormal{hi}} }_{ \infty }^{ q } + \mu^M.
\end{equation}
Now note that
\begin{equation*}
\norm{ u_{ \textnormal{hi}} }_{ \infty } \lesssim \norm{ u_{ \textnormal{hi}} }_{ s } + \mu^M \lesssim \mu^{ \frac{ 1 }{ 2 }} + \mu^M.
\end{equation*}
If~\({ q \geq  1 }\), then~\eqref{eq:high-freq-comp-geometric} shows that~\({ \norm{ u_{ \textnormal{hi}}}_{ \infty } \lesssim \mu^M = o(\mu^\alpha) }\) for sufficiently small~\({ \mu }\). If~\({ q < 1 }\), then~\eqref{eq:high-freq-comp-geometric} yields
\begin{equation*}
\norm{ u_{ \textnormal{hi}} }_{ \infty } \lesssim \mu^{ \frac{1}{2}(1 - q)^{-1}} + \mu^{M} = o(\mu^\alpha)
\end{equation*}
for sufficiently small~\({ \mu }\) due to~\({ M \geq \frac{1}{2}(1 - q)^{-1} > \alpha }\) and the fact that
\begin{equation} \label{eq:elementary-inequality}
x \leq a x^c + b \qquad \text{implies} \qquad x \lesssim a^{(1 - c)^{-1}} + b
\end{equation}
for~\({ x, a, b > 0 }\) and~\({ c \in (0, 1) }\). (To get~\eqref{eq:elementary-inequality}, note first that~\({ x \leq 2 \max \{ ax^c, b \} }\). If~\({ b }\) is the maximum, then \({ x \lesssim b \leq a^{(1 - c)^{-1}} + b }\). Otherwise, \({ x \leq (2a)^{(1 - c)^{-1}} }\), which gives \({ x \lesssim a^{ (1 - c)^{-1}} \leq a^{ (1 - c)^{-1}} + b }\).)

Suppose instead that the low-frequency component dominates: \({ \norm{ u_{ \textnormal{lo}} }_{ \infty } > \norm{ u_{ \textnormal{hi}} }_{ \infty } }\). By Maclaurin expansion of~\({ \mathfrak{m} }\) and~\eqref{eq:wave-speed-lower-bound-solitary} we have
\begin{equation} \label{eq:wavespeed-nearzero-inf-bnd}
\nu - \mathfrak{m}(\xi) > \nu - \mathfrak{m}(0) - \frac{ c \mathfrak{m}^{(2\ell)}(0)}{(2 \ell)!} \xi^{2 \ell} > - \frac{ c \mathfrak{m}^{(2\ell)}(0)}{(2 \ell)!} \xi^{2 \ell} + o\bigl( \norm{ u }_{ \infty }^q\bigr)
\end{equation}
for some~\({ c > 0 }\) when~\({ \abs{ \xi } < \xi_0 }\). Thus
\begin{equation} \label{eq:optimal-inf-estimate1}
\norm{ u_{ \textnormal{lo}}^{(2\ell)} }_{ 0 } \lesssim \norm{ (\nu - \mathfrak{m}) \widehat{  u_{ \textnormal{lo}}  }  }_{ 0 } + \norm{ u_{ \textnormal{lo}} }_{ 0 } \, o\bigl(\norm{ u }_{ \infty }^{ q }\bigr).
\end{equation}
Equation~\eqref{eq:low-frequency-comp} further gives
\begin{align}  \label{eq:optimal-inf-estimate2}
\begin{aligned}
\norm{ (\nu - \mathfrak{m}) \widehat{  u_{ \textnormal{lo}}  }  }_{ 0 }  &\lesssim \norm{ n(u) }_{ 0 } + \norm{ \mathcal{E}'(u) + \nu \mathcal{Q}'(u) }_{ 0 } \\
&\lesssim \mu^{ \frac{1}{2}} \norm{ u }_{ \infty }^{ q } + \mu^M \\
&\lesssim \mu^{ \frac{1}{2}} \norm{ u_{ \textnormal{lo}} }_{ \infty }^{ q } + \mu^M,
\end{aligned}
\end{align}
and so we obtain
\begin{equation} \label{eq:optimal-inf-estimate4}
\norm{ u_{ \textnormal{lo}}^{(2\ell)} }_{ 0 } \lesssim \mu^{ \frac{1}{2}} \norm{ u_{ \textnormal{lo}} }_{ \infty }^{ q } + \mu^M.
\end{equation}
Gagliardo--Nirenberg's inequality then shows that
\begin{equation*}
\norm{ u_{ \textnormal{lo}} }_{ \infty } \lesssim \norm{ u_{ \textnormal{lo}} }_{ 0 }^{1 - \frac{1}{4\ell}} \norm{ u_{ \textnormal{lo}}^{(2\ell)} }_{ 0 }^{ \frac{1}{4\ell} } \lesssim \mu^{ \frac{1}{2}} \norm{ u_{ \textnormal{lo}} }_{ \infty }^{\frac{q}{4\ell}} + \mu^{ \widetilde{ M }},
\end{equation*}
where~\({ \widetilde{ M } \coloneqq \frac{1}{2} \left( 1 - \frac{1}{4 \ell} \right) + \frac{M}{4 \ell} \geq \alpha  }\), from which we finally deduce that
\begin{equation*}
\norm{ u_{ \textnormal{lo}} }_{ \infty } \lesssim  \mu^{ \frac{1}{2}\left(1 -  \frac{q}{4\ell} \right)^{-1} } + \mu^{ \widetilde{ M }} = \mu^\alpha + \mu^{\widetilde{ M } } \lesssim \mu^{ \alpha}
\end{equation*}
with help of~\eqref{eq:elementary-inequality} for~\({ c = q / 4\ell  }\).
\end{proof}
\vspace*{-1em} 
\enlargethispage{.8\baselineskip} 
\begin{remark}
Note that the estimates obtained in the case~\({ \norm{ u_{ \textnormal{hi}} }_{ \infty } \geq \norm{ u_{ \textnormal{lo}} }_{ \infty } }\) in the proof of \Cref{thm:bound-Hs-norm-mu} are (slightly, when~\({ q < 1 }\)) better than in the low-frequency dominating scenario. For the actual solutions~\({ u }\) in \Cref{thm:existence-solitary}, we must, at least when~\({ q \geq 1 }\), have \({ \norm{ u_{ \textnormal{lo}} }_{ \infty } > \norm{ u_{ \textnormal{hi}} }_{ \infty } }\), because \({ \norm{ u }_{ \infty } \lesssim \norm{ u_{ \textnormal{hi}} }_{ \infty } \lesssim \mu^M }\) with \({ M = \infty }\) leads to the contradiction \({ u = 0 }\) in the high-frequency dominating case.
\end{remark}

\Cref{thm:wave-speed-lower-bound-solitary,thm:bound-Hs-norm-mu} now immediately imply the following result.
\begin{corollary} \label{thm:wavespeed-final-bound}
The estimate
\begin{equation*}
\nu - \mathfrak{m}(0) \gtrsim \mu^{q \alpha}
\end{equation*}
holds uniformly over the set of special near minimisers~\eqref{eq:special-near-min}.
\end{corollary}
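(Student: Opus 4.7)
The plan is to simply combine \Cref{thm:wave-speed-lower-bound-solitary} and \Cref{thm:bound-Hs-norm-mu}. The former gives the lower bound
\begin{equation*}
\nu - \mathfrak{m}(0) \gtrsim \mu^{q\alpha} + o\bigl(\norm{u}_\infty^q\bigr)
\end{equation*}
uniformly over special near minimisers, while the latter provides the control $\norm{u}_\infty \lesssim \mu^\alpha$, and hence $\norm{u}_\infty^q \lesssim \mu^{q\alpha}$, in the same uniform sense.

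Substituting the second bound into the first, the error term $o\bigl(\norm{u}_\infty^q\bigr)$ becomes $o(\mu^{q\alpha})$, so for all sufficiently small $\mu > 0$ we have
\begin{equation*}
\nu - \mathfrak{m}(0) \gtrsim \mu^{q\alpha} + o(\mu^{q\alpha}) \gtrsim \mu^{q\alpha},
\end{equation*}
which is the claimed estimate. No further analysis is required, since both inputs were already established uniformly over the class \eqref{eq:special-near-min}; there is really no obstacle here, the corollary is just a bookkeeping consequence of the two preceding propositions and in particular of the fact that the improved $\textnormal{L}^\infty$ bound in \Cref{thm:bound-Hs-norm-mu} absorbs the $o(\norm{u}_\infty^q)$ remainder in \eqref{eq:wave-speed-lower-bound-solitary}.
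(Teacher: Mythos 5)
Your proof is correct and is exactly the paper's argument: the paper states that \Cref{thm:wave-speed-lower-bound-solitary,thm:bound-Hs-norm-mu} "now immediately imply" the corollary, and the mechanism is precisely the one you describe, namely that the bound \({\norm{u}_\infty \lesssim \mu^\alpha}\) turns the remainder \({o(\norm{u}_\infty^q)}\) in \eqref{eq:wave-speed-lower-bound-solitary} into \({o(\mu^{q\alpha})}\), which is absorbed for small \({\mu}\).
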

Moreover, \({ A_P \to 1 }\) as~\({ P \to \infty }\) in the construction~\eqref{eq:special-min-seq} of~\({ \widetilde{ u }_k }\) from~\({ u_P^\star }\), so \Cref{thm:bound-Hs-norm-mu} also yields that \({ \norm{ u_P^\star }_{ \infty } \lesssim \mu^\alpha }\) uniformly in~\({ P \geq P_\mu }\) (possibly enlarged). But then, similarly as \Cref{thm:wave-speed-lower-bound-solitary}, we~get
\begin{equation*}
\nu_P - \mathfrak{m}(0) \gtrsim \mu^{q \alpha} + o\bigl(\norm{ u_P^\star }_{ \infty }^q\bigr) \gtrsim \mu^{q \alpha},
\end{equation*}
which leads to
\begin{equation*}
\nu_P - \mathfrak{m}(0) \eqsim \mu^{q \alpha} \eqsim \norm{ u_P^\star }_{ \infty }^q
\end{equation*}
with help of~\eqref{eq:wavespeed-est-from-equation}. This concludes the proof of~\Cref{thm:existence-periodic}.

\begin{lemma} \label{thm:special-near-min-nonlinearity-bnd}
Special near minimisers satisfy
\begin{equation*}
\mathcal{N}(u) \lesssim - \mu^{1 + q \alpha}, \qquad \mathcal{N}_q(u) \lesssim - \mu^{1 + q \alpha} \qquad \text{and} \qquad \mathcal{N}_\textnormal{r}(u) = o( \mu^{1 + q \alpha}).
\end{equation*}
\end{lemma}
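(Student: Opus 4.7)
The plan is to read the first two bounds almost directly off the definition of a special near minimiser, and then to treat the remainder $\mathcal{N}_\textnormal{r}(u)$ separately with help of the sharp $\textnormal{L}^\infty$ control from \Cref{thm:bound-Hs-norm-mu}. First I would note that since $\mathfrak{m}(\xi) \leq \mathfrak{m}(0)$ pointwise and $\mathcal{Q}(u) = \mu$, Plancherel's theorem gives
\[
\mathcal{L}(u) = -\tfrac{1}{2}\int_{\R} \mathfrak{m}(\xi)\,\abs{\widehat{u}(\xi)}^{2}\dee\xi \,\geq\, -\mathfrak{m}(0)\,\mu.
\]
Combining this with the energy inequality in \eqref{eq:special-near-min} yields
\[
\mathcal{N}(u) \,=\, \mathcal{E}(u) - \mathcal{L}(u) \,<\, -I_\star\,\mu^{1+q\alpha},
\]
which is the first estimate. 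No chain rule or $\textnormal{L}^\infty$ input is needed here.

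For the remainder, I would exploit the asymptotic assumption on $n_\textnormal{r}$ from Assumption~\ref{assumption:nonlinearity}: in the $\textnormal{C}_\textnormal{loc}^{\varsigma}$ case taking $j = 0$ directly, and in the $\textnormal{Lip}_\textnormal{loc}$ case by integrating the a.e.\ bound on $n'_\textnormal{r}$, one obtains $n_\textnormal{r}(x) = o(\abs{x}^{1+q})$ as $x \to 0$, hence $N_\textnormal{r}(x) = o(\abs{x}^{2+q})$. \Cref{thm:bound-Hs-norm-mu} gives $\norm{u}_{\infty} \lesssim \mu^{\alpha} \to 0$ uniformly over special near minimisers, so for any $\epsilon > 0$ and $\mu$ small enough we have $\abs{N_\textnormal{r}(u(x))} \leq \epsilon\,\norm{u}_{\infty}^{q}\,u(x)^{2}$ pointwise, which integrates to
\[
\abs{\mathcal{N}_\textnormal{r}(u)} \,\leq\, \epsilon\,\norm{u}_{\infty}^{q}\,\smashoperator{\int_{\R}} u^{2}\dee x \,\lesssim\, \epsilon\,\mu^{1+q\alpha}.
\]
Letting $\epsilon \to 0$ (after $\mu \to 0$) gives $\mathcal{N}_\textnormal{r}(u) = o(\mu^{1+q\alpha})$, and the third bound follows at once from $\mathcal{N}_q(u) = \mathcal{N}(u) - \mathcal{N}_\textnormal{r}(u) < -I_\star\mu^{1+q\alpha} + o(\mu^{1+q\alpha}) \lesssim -\mu^{1+q\alpha}$ for all sufficiently small $\mu$.

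I expect no real obstacle; the only point to verify is in the cut-off regime $s \leq \tfrac{1}{2}$, where $n$ has been replaced by $\widetilde{n}$ from~\eqref{eq:n-cutoff}. Since $\alpha > \tfrac{1}{2} > \theta$ and $\norm{u}_{\infty} \lesssim \mu^{\alpha}$, we automatically have $\norm{u}_{\infty} \leq A_{\mu}$ for small $\mu$, so the cut-off is inactive on $u$ and the asymptotic behaviour of the original $n_\textnormal{r}$ near $0$ genuinely applies. Thus the argument above goes through uniformly across all admissible values of~$s$.
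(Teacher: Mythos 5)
Your proposal is correct and follows essentially the same route as the paper: bound $-\mathcal{L}(u)\leq\mathfrak{m}(0)\mu$ from $\mathfrak{m}\leq\mathfrak{m}(0)$, subtract from the energy bound in the definition of a special near minimiser to get $\mathcal{N}(u)\lesssim-\mu^{1+q\alpha}$, then bound $\mathcal{N}_\textnormal{r}$ via $N_\textnormal{r}(x)=o(\abs{x}^{2+q})$ together with $\norm{u}_\infty\lesssim\mu^\alpha$, and deduce the $\mathcal{N}_q$ estimate by subtraction. One small expository quibble: in the $\textnormal{Lip}_\textnormal{loc}$ case, integrating $n'_\textnormal{r}(x)=\mathcal{O}(\abs{x}^q)$ only yields $n_\textnormal{r}(x)=\mathcal{O}(\abs{x}^{1+q})$, not $o$; what actually gives the little-$o$ is the $j=0$ condition $n_\textnormal{r}(x)=o(\abs{x}^{1+q})$ from Assumption~\ref{assumption:nonlinearity}, which applies in all regularity cases and is what the paper uses directly.
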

\begin{proof}
Since~\({ - \mathcal{L}(u) \leq \mathfrak{m}(0) \mu }\), we find from~\eqref{eq:E-inf-estimate} that
\begin{equation*}
\mathcal{N}(u) = \mathcal{E}(u) - \mathcal{L}(u) \lesssim - \mu^{1 + q \alpha},
\end{equation*}
and
\begin{equation*}
\abs{ \mathcal{N}_\textnormal{r}(u) } = \int_{ \R } o\bigl(\abs{ u }^{2 + q}\bigr) \dee x = o\bigl( \mu \norm{ u }_{ \infty }^q \bigr) = o(\mu^{1 + q \alpha})
\end{equation*}
by~\Cref{thm:bound-Hs-norm-mu}.
\end{proof}

\begin{proposition} \label{thm:strict-subhomogeneity}
There exists~\({ \mu_\star > 0 }\) such that~\({ \mu \mapsto I_\mu }\) is strictly subhomogeneous on~\({ (0, \mu_\star) }\).
\end{proposition}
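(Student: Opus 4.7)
The plan is a scaling argument. For $a > 1$, given $u$ with $\mathcal{Q}(u) = \mu$, set $v := \sqrt{a}\, u$, so that $\mathcal{Q}(v) = a\mu$. The homogeneities of $\mathcal{L}$ (degree $2$) and $\mathcal{N}_q$ (degree $2 + q$) yield the key identity
\[
\mathcal{E}(v) - a\mathcal{E}(u) = \bigl(a^{(2+q)/2} - a\bigr)\mathcal{N}_q(u) + \mathcal{N}_\textnormal{r}(v) - a\mathcal{N}_\textnormal{r}(u),
\]
whose first term is strictly negative for $a > 1$ whenever $\mathcal{N}_q(u) < 0$; the idea is to exploit this gain to beat both the near-minimising defect of $u$ and the contribution of the remainder $\mathcal{N}_\textnormal{r}$. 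Moreover, since subhomogeneity at a product $a = a_1 a_2$ follows by chaining the base case at each factor (all intermediate $I_{\cdot}$ being negative), it suffices to treat $a \in (1, 2]$: for general $a > 1$ with $a\mu < \mu_\star$, write $a = 2^j b$ with $b \in (1, 2]$ and iterate through the intermediate masses $b\mu, 2b\mu, \dotsc, 2^jb\mu$, all of which lie in $(0, \mu_\star)$.

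For the base case, fix $a \in (1, 2]$ and $\mu$ with $a\mu < \mu_\star$, and pick $u = \widetilde{u}_k$ from the special minimising sequence of \Cref{thm:special-min-seq-for-E}, with $k$ large enough that (a) $u$ is a special near-minimiser in the sense of~\eqref{eq:special-near-min}, and (b) $\mathcal{E}(u) < I_\mu + \eta$ for a small $\eta > 0$ to be fixed. Since $\|\widetilde{u}_k\|_s \eqsim \mu^{1/2}$, we have $v \in U_{a\mu}^s$ once $\mu_\star$ is small. By \Cref{thm:special-near-min-nonlinearity-bnd}, $\mathcal{N}_q(u) \leq -C_1 \mu^{1+q\alpha}$ and $\mathcal{N}_\textnormal{r}(u) = o(\mu^{1+q\alpha})$; and since \Cref{thm:bound-Hs-norm-mu} gives $\|v\|_\infty \leq \sqrt{a}\|u\|_\infty \lesssim \mu^\alpha$, the same reasoning as in the proof of \Cref{thm:special-near-min-nonlinearity-bnd} produces $\mathcal{N}_\textnormal{r}(v) = o(\mu^{1+q\alpha})$, uniformly in $a \in (1, 2]$. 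Combining with $a^{(2+q)/2} - a \eqsim a - 1$ on $[1, 2]$, I obtain
\[
\mathcal{E}(v) - a\mathcal{E}(u) \leq -(a - 1) C_2 \mu^{1+q\alpha} + o\bigl(\mu^{1+q\alpha}\bigr),
\]
which for $\mu$ small enough (uniformly in $a$) is bounded above by $-\tfrac{1}{2}(a-1)C_2\mu^{1+q\alpha}$. Choosing $\eta < (a-1)C_2 \mu^{1+q\alpha}/(4a)$ then forces
\[
I_{a\mu} \leq \mathcal{E}(v) < a\bigl(I_\mu + \eta\bigr) - \tfrac{1}{2}(a-1)C_2\mu^{1+q\alpha} < a I_\mu.
\]

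The main difficulty is controlling $\mathcal{N}_\textnormal{r}$: in the purely homogeneous case $n \equiv n_q$, the right-hand side of the key identity reduces to $(a^{(2+q)/2} - a)\mathcal{N}_q(u) < 0$ and subhomogeneity drops out essentially for free. What makes the general case go through is the combination of the quantitative lower bound $|\mathcal{N}_q(u)| \gtrsim \mu^{1+q\alpha}$ with the refined $L^\infty$ estimate $\|u\|_\infty \lesssim \mu^\alpha$, together ensuring that $\mathcal{N}_\textnormal{r}(v) - a\mathcal{N}_\textnormal{r}(u) = o(\mu^{1+q\alpha})$ is genuinely negligible compared with the leading gain---which is precisely why the \enquote{special near-minimiser} framework of~\cref{sec:special-min-seq,sec:subadditivity} has been set up in the first place.
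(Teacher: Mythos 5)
Your approach is the same as the paper's: scale a special near-minimiser $u = \widetilde{u}_k$ to $v = a^{1/2}u$, expand $\mathcal{E}(v) - a\mathcal{E}(u)$ to isolate the leading gain $(a^{(2+q)/2}-a)\mathcal{N}_q(u)$, and use Lemma~\ref{thm:special-near-min-nonlinearity-bnd} together with the refined bound $\norm{u}_\infty \lesssim \mu^\alpha$ from Proposition~\ref{thm:bound-Hs-norm-mu} to control the remainder. (You have the correct exponent $a^{(2+q)/2}$ for the degree-$(2+q)$ homogeneity of $\mathcal{N}_q$; the paper's $a^{q/2}$ is a typo.) The reduction to $a \in (1,2]$ via chaining is a sensible addition.

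However, the key quantitative step contains a gap. You bound $\mathcal{N}_{\textnormal{r}}(v) - a\mathcal{N}_{\textnormal{r}}(u)$ by $o(\mu^{1+q\alpha})$, uniformly in $a \in (1,2]$, and then claim that for $\mu$ small enough \emph{uniformly in $a$} the inequality
\[
\mathcal{E}(v) - a\mathcal{E}(u) \leq -(a-1)\,C_2\,\mu^{1+q\alpha} + o(\mu^{1+q\alpha}) \leq -\tfrac{1}{2}(a-1)\,C_2\,\mu^{1+q\alpha}
\]
holds. This fails as $a \to 1^+$: the left term vanishes like $a-1$ while your error bound does not, so no choice of $\mu_\star$ works uniformly over $a \in (1,2]$. (With $\epsilon(\mu) \coloneqq o(1)$ the little-$o$ rate, you would need $\epsilon(\mu) \leq \tfrac{1}{2}(a-1)C_2$ for all admissible $a$, which cannot hold for a fixed $\mu_\star$ once $a-1 < \epsilon(\mu)/C_2$.) What is missing is that the remainder itself carries the factor $(a-1)$. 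Since $G(s) \coloneqq \mathcal{N}_{\textnormal{r}}(su) - s^2\mathcal{N}_{\textnormal{r}}(u)$ satisfies $G(1) = 0$ and, on $s \in [1,\sqrt{2}]$, $\abs{G'(s)} \lesssim \bigl(\int \abs{n_{\textnormal{r}}(su)u}\dee x + \abs{\mathcal{N}_{\textnormal{r}}(u)}\bigr) = o(\mu^{1+q\alpha})$ uniformly (using $n_{\textnormal{r}}(x) = o(\abs{x}^{1+q})$, $N_{\textnormal{r}}(x) = o(\abs{x}^{2+q})$ and $\norm{u}_\infty \lesssim \mu^\alpha$), the mean value theorem gives
\[
\abs*{\mathcal{N}_{\textnormal{r}}(a^{1/2}u) - a\mathcal{N}_{\textnormal{r}}(u)} \lesssim (a^{1/2}-1)\, o(\mu^{1+q\alpha}) \eqsim (a-1)\, o(\mu^{1+q\alpha}),
\]
which, combined with $a^{(2+q)/2} - a \eqsim a-1$, closes the argument uniformly in $a \in (1,2]$. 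You should make this $(a-1)$ factor explicit; without it the stated uniformity is not justified. (The paper's writeup of this proposition is similarly terse on this point and implicitly relies on the same observation.)
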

\begin{proof}
Fix~\({ a > 1 }\) and note that ~\({ \norm{ a^{ \frac{1}{2}} u }_{ s } \lesssim \mu^{ \frac{1}{2}} < R }\) for any special near-minimiser~\({ u }\). Estimating
\begin{align*}
I_{a \mu} \leq \mathcal{E}(a^{ \frac{1}{2}} u)  &= \mathcal{L}( a^{ \frac{1}{2}} u) + \mathcal{N}( a^{ \frac{1}{2}} u) \\
&= a \mathcal{L}( u) + a^{ \frac{1}{2}q} \mathcal{N}_q( u) + \mathcal{N}_\textnormal{r}( a^{ \frac{1}{2}} u) \\
&= a \mathcal{E}(u) + \bigl( a^{ \frac{1}{2}q} - a \bigr) \mathcal{N}_q(u) + \mathcal{N}_\textnormal{r}( a^{ \frac{1}{2}} u) - a \mathcal{N}_\textnormal{r} (u) \\
& \leq a \mathcal{E}(u) - c \bigl( a^{ \frac{1}{2}q} - a \bigr) \mu^{1 + q \alpha} + o(\mu^{1 + q \alpha}),
\end{align*}
where~\({ c > 0 }\), we may finally choose~\({ u = \widetilde{ u }_k }\) for the special minimising sequence~\({ \Set{ \widetilde{ u }_k }_k }\) and let~\({ k \to \infty }\). It follows that
\begin{equation*}
I_{a \mu} \leq a I_\mu - c \bigl( a^{ \frac{1}{2}q} - a \bigr) \mu^{1 + q \alpha} + o(\mu^{1 + q \alpha}) < I_\mu.
\end{equation*}
\end{proof}

\section{Concentration-compactness argument for solitary waves} \label{sec:concentration-compactness}

In this section we establish~\Cref{thm:existence-solitary} with help of Lions' concentration-compactness principle~\autocite[Lemma~III.1 and Remark~III.3]{Lio1984a}, stated in a suitable version below. Lions' principle, originally proved for~\({ \textnormal{H}^s }\) with~\({ s \in \N }\), generalises also to the fractional setting. Specifically, this concerns property~\labelcref{thm:cc-dichotomy-seminorm} under \enquote{dichotomy}, where we refer to~\autocite[Proposition~3.1 and Corollary~3.2]{ParSal2019a} for a derivation when~\({ s \in (0, 1) }\)---which together with Lions' result extends to all~\({ s > 0 }\).

\begin{theorem}[Concentration-compactness principle]
\label{thm:cc}
Every bounded sequence~\({\{\eta_k\}_{k \in \N} }\) in~\({ \textnormal{H}^s }\) satisfying
\begin{equation*}
\norm{\eta_k}_{0}^2 \xrightarrow[k \to \infty]{} \lambda > 0
\end{equation*}
admits a subsequence, still denoted by~\({\{\eta_k\}_k}\), for which one of the following phenomena takes place:
\begin{description}[leftmargin=\parindent,itemindent=.4em,font=\normalfont\itshape\color{artcolor}]
\item[Concentration:]
There exists a sequence~\({\{x_k\}_k \subset \R}\) such that
\begin{equation*}
\inf_{k \in \N} \int_{B_r(x_k)}\!\!\!\!\!\!\!\!\!\!\!\! \abs{\eta_k}^2 \dee x \xrightarrow[r \to \infty]{} \lambda.
\end{equation*}
\item[Vanishing:]
For all~\({r > 0}\) it is true that
\begin{equation*}
\sup_{y \in \R} \int_{B_r(y)}\!\!\!\!\!\!\!\!\!\! \abs{\eta_k}^2 \dee x \xrightarrow[k \to \infty]{}  0.
\end{equation*}
\item[Dichotomy:]
There exist a value~\({\theta \in (0, \lambda)}\), a sequence~\({ \Set{ x_k }_k \subset \R }\) and bounded sequences~\({\Set[\big]{\eta_k^{(1)}}\vphantom{ \eta}_k^{}}\), \({ \Set[\big]{\eta_k^{(2)}}\vphantom{ \eta}_k^{} }\) in~\({ \textnormal{H}^s }\), such that
\begin{enumerate}[itemsep=1em,itemindent=.5\parindent,ref=\roman*)]
\item \label{thm:cc-dichotomy-convergence}
\({\norm[\big]{\eta_k^{} - \eta_k^{(1)} - \eta_k^{(2)}}_{0} \to  0, \quad \norm[\big]{\eta_k^{(1)}}_{0}^2 \to \theta, \quad \text{and} \quad \norm[\big]{\eta_k^{(2)}}_{0}^2 \to \lambda - \theta}\);
\item \label{thm:cc-dichotomy-support}%
\({%
\begin{aligned}[t]
\support \eta_k^{(1)} = \Set*{ \abs{ x - x_k } \leq A_k } \\
\support \eta_k^{(2)} = \Set*{ \abs{ x - x_k } \geq B_k }
\end{aligned}%
\mathrel{\raisebox{-.9em}{%
\quad\text{for\quad\({ A_k, B_k \to \infty }\)\quad satisfying\quad\({ \displaystyle\frac{A_k}{B_k} \to 0 }\); and}%
}}%
}\)
\item \label{thm:cc-dichotomy-seminorm}
\({\displaystyle\liminf_k \left( \bigl[ \eta_k \bigr]_s^2 - \bigl[ \eta_k^{(1)} \bigr]_s^2 - \bigl[ \eta_k^{(2)} \bigr]_s^2  \right) \geq 0}\), where~\({[\cdot]_s^2 \coloneqq \norm{}_{s}^{2} - \norm{}_{0}^{2}}\) is a seminorm.
\end{enumerate}
\end{description}
\end{theorem}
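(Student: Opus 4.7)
The plan is to follow Lions' original strategy based on the Lévy concentration function, and then handle the fractional seminorm statement~\labelcref{thm:cc-dichotomy-seminorm} via the finite-difference characterization of~\({\textnormal{H}^s}\) recorded in~\cref{sec:preliminaries-spaces}.

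First I would introduce the Lévy concentration function
\begin{equation*}
Q_k(r) \coloneqq \sup_{y \in \R} \int_{B_r(y)}\!\!\!\!\!\!\!\!\!\! \abs{\eta_k}^2 \dee x,
\end{equation*}
which is nondecreasing in~\({ r }\), bounded by~\({ \norm{\eta_k}_0^2 \to \lambda }\), and tends to~\({ \norm{\eta_k}_0^2 }\) as~\({ r \to \infty }\). Helly's selection theorem applied to this uniformly bounded monotone family yields a subsequence (not relabelled) along which~\({ Q_k \to Q }\) pointwise, with~\({ Q }\) nondecreasing; set~\({ \alpha \coloneqq \lim_{r \to \infty} Q(r) \in [0, \lambda] }\). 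The trichotomy corresponds to the three cases~\({ \alpha = 0 }\), \({ \alpha = \lambda }\) and~\({ 0 < \alpha < \lambda }\). Vanishing is immediate from the definition of~\({ Q_k }\); concentration in the case~\({ \alpha = \lambda }\) is obtained by picking~\({ x_k }\) with~\({ \int_{B_{r_k}(x_k)} \abs{\eta_k}^2 \dee x \geq Q_k(r_k) - 1/k }\) along a suitably slowly growing~\({ r_k \nearrow \infty }\).

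For dichotomy, I would fix~\({ R }\) so that~\({ Q(R) \geq \alpha - \epsilon }\), select~\({ x_k }\) nearly achieving~\({ Q_k(R) }\), and define~\({ \eta_k^{(1)} \coloneqq \chi_k^{(1)} \eta_k }\), \({ \eta_k^{(2)} \coloneqq \chi_k^{(2)} \eta_k }\) via smooth cutoffs supported in~\({ B_{A_k}(x_k) }\) and~\({ \R \setminus B_{B_k}(x_k) }\) respectively. The radii~\({ A_k, B_k \nearrow \infty }\) with~\({ A_k/B_k \to 0 }\) are chosen so that the~\({ \textnormal{L}^2 }\)~mass in the transition annulus~\({ \Set*{ A_k \leq \abs{x - x_k} \leq B_k } }\) vanishes, which is possible because~\({ Q_k(B_k) - Q_k(A_k) \to 0 }\) along sufficiently slowly growing radii. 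Properties~\labelcref{thm:cc-dichotomy-convergence} and~\labelcref{thm:cc-dichotomy-support} then hold by construction, after normalising~\({ \norm{\eta_k^{(1)}}_0^2 \to \theta \coloneqq \alpha }\) and~\({ \norm{\eta_k^{(2)}}_0^2 \to \lambda - \theta }\).

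The main obstacle is property~\labelcref{thm:cc-dichotomy-seminorm}. When~\({ s \in \Z_+ }\) it follows from Leibniz' rule and the disjointness~\({ \chi_k^{(1)} \chi_k^{(2)} \equiv 0 }\): the cross-differentiation terms involving both cutoffs are concentrated in the transition annulus, where the~\({ \textnormal{L}^2 }\)~mass is small and the derivatives of the cutoffs remain uniformly bounded after rescaling to~\({ A_k \eqsim 1 }\). In the fractional case~\({ s = k + \sigma }\) with~\({ \sigma \in (0, 1) }\), I would apply the finite-difference representation
\begin{equation*}
\norm{u}_s^2 \eqsim \norm{u}_k^2 + \int\limits_{\mathclap{\abs{h} \leq \delta}} \bigl\| \upDelta_h^1 u^{(k)} \bigr\|_0^2 \frac{\dee h}{\abs{h}^{1 + 2\sigma}}.
\end{equation*}
Since~\({ \delta }\) may be taken arbitrarily small while~\({ B_k - A_k \to \infty }\), for all sufficiently large~\({ k }\) a shift~\({ h }\) with~\({ \abs{h} \leq \delta }\) cannot carry a point in~\({ \support \eta_k^{(1)} }\) to a point in~\({ \support \eta_k^{(2)} }\), so the mixed product~\({ \upDelta_h^1 \bigl(\eta_k^{(1)}\bigr)^{(k)}(x) \cdot \upDelta_h^1 \bigl(\eta_k^{(2)}\bigr)^{(k)}(x) }\) vanishes pointwise. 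Combining this with the integer-part Leibniz argument and the annular-mass estimate yields the \({ \liminf }\)~inequality in~\labelcref{thm:cc-dichotomy-seminorm}, and the ratio condition~\({ A_k/B_k \to 0 }\) is precisely the quantitative separation needed to close the estimates on the transition remainder.
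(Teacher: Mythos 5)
The paper does not prove Theorem~\ref{thm:cc}: it is recorded as a cited result, attributed to Lions for the integer-order case and to Parnet--Salort for property~\labelcref{thm:cc-dichotomy-seminorm} in the fractional range $s \in (0,1)$, from which the full range $s > 0$ follows. Your proposal thus attempts something the paper deliberately avoids---a self-contained proof. The scaffolding (Lévy concentration function $Q_k$, Helly's selection theorem, trichotomy on $\alpha = \lim_{r\to\infty} Q(r)$, cutoffs for dichotomy) is the standard Lions machinery and is sound.

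The gap is in property~\labelcref{thm:cc-dichotomy-seminorm}. Your key observation---that for $\abs{h} \leq \delta$ and $k$ large the finite differences $\upDelta_h^1\bigl(\eta_k^{(1)}\bigr)^{(\lfloor s\rfloor)}$ and $\upDelta_h^1\bigl(\eta_k^{(2)}\bigr)^{(\lfloor s\rfloor)}$ have disjoint support---is correct and gives $\bigl[\eta_k^{(1)}+\eta_k^{(2)}\bigr]_s^2 = \bigl[\eta_k^{(1)}\bigr]_s^2+\bigl[\eta_k^{(2)}\bigr]_s^2$ eventually. But there is no route from this to $\bigl[\eta_k\bigr]_s^2 \geq \bigl[\eta_k^{(1)}\bigr]_s^2+\bigl[\eta_k^{(2)}\bigr]_s^2-o(1)$: the transition piece $r_k \coloneqq \eta_k-\eta_k^{(1)}-\eta_k^{(2)}$ is small only in $\textnormal{L}^2$, not in $\textnormal{H}^s$, so the cross term $2\innerproduct{\eta_k^{(1)}+\eta_k^{(2)}}{r_k}_s$ in the seminorm expansion has uncontrolled sign and size, and $[\,r_k\,]_s$ need not vanish. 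What actually closes the argument (and is the content of the cited Parnet--Salort result) is a discrete Leibniz estimate applied \emph{directly} to $\eta_k^{(i)}=\chi_k^{(i)}\eta_k$, with no detour through the sum: write $\upDelta_h^1\bigl(\chi_k^{(i)}\eta_k\bigr) = \chi_k^{(i)}\,\upDelta_h^1\eta_k + \eta_k(\cdot+h)\,\upDelta_h^1\chi_k^{(i)}$, square, sum over $i$, use the pointwise bound $\bigl(\chi_k^{(1)}\bigr)^2+\bigl(\chi_k^{(2)}\bigr)^2\leq 1$ for the leading contribution, and control the terms carrying $\upDelta_h^1\chi_k^{(i)}$---after integrating against the singular weight $\abs{h}^{-1-2\sigma}$---by $\norm{(\chi_k^{(i)})'}_\infty$ together with the vanishing annular $\textnormal{L}^2$ mass. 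You do invoke \enquote{the integer-part Leibniz argument and the annular-mass estimate}, so the right ingredients are in view, but the mixed-product observation is not where the estimate lives, and as written the argument does not reach the required lower bound on $\bigl[\eta_k\bigr]_s^2$.
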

Practically, we may rescale and assume that for all~\({ k }\),
\begin{equation*}
\norm[\big]{\eta_k}_{0}^2 = \lambda, \qquad \norm[\big]{\eta_k^{(1)}}_{0}^2 = \theta, \qquad \text{and} \qquad \norm[\big]{\eta_k^{(2)}}_{0}^2 = \lambda - \theta.
\end{equation*}

We apply~\Cref{thm:cc} to the special minimising sequence~\({ \Set{ \widetilde{ u }_k }_k }\) for~\({ \mathcal{E} }\) over~\({ U_\mu^s }\) from~\cref{sec:special-min-seq}, dropping the tilde in~\({ \widetilde{ u }_k }\) for clarity. Note that we may always assume that~\({ u_k }\) is at least in~\({ U_\mu^1 }\), because we may let~\({ u_k }\) be constructed from the periodic minimisers corresponding to~\({ s = 1 }\), which is \textit{a~priori} best for Lipschitz~nonlinearities.

\begin{lemma} \label{thm:concentrates}
Let~\({\tilde{s} \in \left(0, s\right)}\) and suppose that a subsequence of~\({\Set{u_k}_k}\) \enquote{concentrates}. Then a subsequence of~\({\left\{ u_k(\cdot + x_k) \right\}_k}\) converges in~\({\textnormal{H}^{\tilde{s}}}\) to a minimiser of~\({\mathcal{E}}\) over~\({U_\mu^s}\).
\end{lemma}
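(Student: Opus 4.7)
The plan is to combine translation invariance of the functionals with weak compactness in~\({ \textnormal{H}^s }\), the local compact embedding~\({ \textnormal{H}^s \hookrightarrow \textnormal{H}^{\tilde s} }\) (for~\({ \tilde s < s }\)), and the mass-tightness supplied by the concentration alternative, in order to upgrade the weak limit of the translates to a strong \({ \textnormal{H}^{\tilde s} }\)-limit, and then to verify that this limit is a constrained minimiser.

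Set~\({ v_k \coloneqq u_k(\cdot + x_k) }\). Translation invariance of~\({ \mathcal{Q} }\),~\({ \mathcal{L} }\), and~\({ \mathcal{N} }\) makes~\({ \Set{ v_k }_k }\) another minimising sequence with~\({ \mathcal{Q}(v_k) = \mu }\), and the concentration hypothesis rewrites as~\({ \inf_k \int_{B_r(0)} \abs{v_k}^2 \dee x \to 2\mu }\) as~\({ r \to \infty }\). In particular, this yields the tail bound \({ \sup_k \int_{ \Set{ \abs{x} > r}} \abs{v_k}^2 \dee x \to 0 }\) as~\({ r \to \infty }\). Since~\({ \Set{ v_k }_k }\) is bounded in~\({ \textnormal{H}^s }\) by~\Cref{thm:special-min-seq-for-E}, reflexivity yields~\({ v_k \rightharpoonup v }\) in~\({ \textnormal{H}^s }\) along a subsequence, for some~\({ v \in \textnormal{H}^s }\). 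Rellich--Kondrachov then supplies \({ v_k \to v }\) in~\({ \textnormal{H}^{\tilde s}(B_r) }\) for every~\({ r > 0 }\) after a diagonal extraction.

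Combining the tail estimate (which by Fatou also bounds the tail of~\({ v }\)) with the local \({ \textnormal{L}^2 }\)-convergence gives \({ v_k \to v }\) in~\({ \textnormal{L}^2 }\). Interpolation then upgrades this to
\begin{equation*}
\norm{ v_k - v}_{ \tilde s} \lesssim \norm{ v_k - v}_{ 0}^{1 - \tilde s / s} \norm{ v_k - v}_{s}^{\tilde s / s} \xrightarrow[ k \to \infty ]{} 0,
\end{equation*}
since~\({ \Set{v_k - v}_k }\) remains bounded in~\({ \textnormal{H}^s }\); hence \({ v_k \to v }\) in~\({ \textnormal{H}^{\tilde s} }\).

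To confirm that~\({ v }\) is a minimiser, strong \({ \textnormal{L}^2 }\)-convergence yields~\({ \mathcal{Q}(v) = \mu }\), and weak lower semicontinuity of~\({ \norm{ \cdot }_{s} }\) combined with \({ \norm{v_k}_{s} \lesssim \mu^{ \frac{1}{2}} < R }\) (for~\({ \mu }\) small enough) gives \({ \norm{v}_{s} < R }\), so \({ v \in U_\mu^s }\). The functional~\({ \mathcal{L} }\) is continuous in \({ \textnormal{L}^2 }\) since \({ L }\) is bounded on~\({ \textnormal{L}^2 }\). For~\({ \mathcal{N} }\): if~\({ s \leq \frac{ 1 }{ 2 } }\), the cut-off bound~\eqref{eq:n-cutoff-estimate} makes~\({ n }\) globally Lipschitz with constant \({ \lesssim \mu^{ \theta q} }\), so a direct estimate controls \({ \abs{ \mathcal{N}(v_k) - \mathcal{N}(v)} }\) by \({ \mu^{\theta q}(\norm{v_k}_{0} + \norm{v}_{0}) \norm{v_k - v}_{0} \to 0 }\); if~\({ s > \frac{ 1 }{ 2 } }\), a choice \({ \tilde s \in \bigl( \tfrac{1}{2}, s \bigr) }\) secures \({ \textnormal{H}^{\tilde s} \hookrightarrow \textnormal{L}^\infty }\), and uniform convergence together with the local Lipschitz property of~\({ n }\) yields continuity of~\({ \mathcal{N} }\). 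Therefore \({ \mathcal{E}(v) = \lim_k \mathcal{E}(v_k) = I_\mu }\), completing the argument.

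The principal obstacle is the \({ \textnormal{L}^\infty }\)-supercritical regime \({ s \leq \frac{ 1 }{ 2 } }\), where weak convergence in~\({ \textnormal{H}^s }\) alone provides no direct control over~\({ \mathcal{N} }\); the cut-off machinery from~\cref{sec:nonlinearity} is exactly what renders the nonlinearity tame enough for the strong \({ \textnormal{L}^2 }\)-passage to the limit.
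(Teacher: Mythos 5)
Your proposal is correct, and its skeleton mirrors the paper's: translate by~\({ x_k }\), extract a weak limit~\({ v }\) in~\({ \textnormal{H}^s }\), combine the concentration tail bound with some \({ \textnormal{L}^2 }\)-compactness criterion to obtain~\({ v_k \to v }\) in~\({ \textnormal{L}^2 }\), interpolate up to~\({ \textnormal{H}^{\tilde s} }\), and conclude by continuity of~\({ \mathcal{E} }\). The one genuine difference lies in how you produce the strong~\({ \textnormal{L}^2 }\)-limit. The paper works entirely on the frequency side: it observes that boundedness in~\({ \textnormal{H}^s }\) forces frequency tightness~\({ \int_{\abs{\xi} > r'} \abs{\widehat{v}_k}^2 \dee\xi \lesssim \langle r'\rangle^{-2s} }\), deduces \({ \textnormal{L}^2 }\)-equicontinuity from the Fourier representation of translation, and then invokes the Kolmogorov--Riesz--Sudakov criterion. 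You instead work on the physical side, using the compact embedding~\({ \textnormal{H}^s(B_r) \hookrightarrow\hookrightarrow \textnormal{L}^2(B_r) }\), a diagonal extraction over~\({ r }\), and the tail bound (propagated to~\({ v }\) via Fatou) to upgrade local to global~\({ \textnormal{L}^2 }\)-convergence. Both routes are equally valid and about the same length; the Kolmogorov--Riesz argument is arguably more self-contained in the Fourier-analytic context of this paper, while yours avoids invoking that criterion by name at the cost of a diagonal step. You also spell out the continuity of~\({ \mathcal{N} }\) in the two regimes~\({ s \leq \frac{1}{2} }\) (via the cut-off bound~\eqref{eq:n-cutoff-estimate}) and~\({ s > \frac{1}{2} }\) (via~\({ \textnormal{H}^{\tilde s} \hookrightarrow \textnormal{L}^\infty }\)), whereas the paper simply appeals to ``continuity of~\({ \mathcal{E} }\)''; that extra care is consistent with the groundwork of Section~\ref{sec:nonlinearity} and is not superfluous.
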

\begin{proof}
Let~\({\epsilon > 0}\) and define~\({v_k \coloneqq u_k(\cdot + x_k)}\), so that by assumption
\begin{equation*}
\int_{\abs{x} > r} \!\!\!\!\!\!\!\!\!\! v_k^2 \dee x < \epsilon
\end{equation*}
for all sufficiently large~\({r > 0}\), uniformly in~\({k}\). Since \({\{v_k\}_k \subset U_\mu^s}\) is bounded in~\({\textnormal{H}^{s}}\), it converges weakly---up to a subsequence---in~\({\textnormal{H}^{s}}\) to some~\({v \in U_\mu^s}\). Moreover, boundedness implies \({\textnormal{L}^2}\)-concentration of the frequency spectrum, because
\begin{equation*}
\int_{\abs{\xi} > r'} \!\!\!\!\!\!\!\!\!\! \abs{\widehat{v}_k}^2 \dee \xi \leq \langle r' \rangle^{-2s} \norm{v_k}_{s}^2 < \epsilon
\end{equation*}
for sufficiently large~\({r' > 0}\), uniformly in~\({k}\). This in turn yields equicontinuity in~\({\textnormal{L}^2}\) by estimating
\begin{equation*}
\int_{\R} \abs*{v_k(\cdot + y) - v_k}^{2} \dee x = \int_{\R} \abs*{\left( \textnormal{e}^{\textnormal{i} y \xi}  - 1 \right) \widehat{v}_k(\xi)}^{2}  \dee \xi \lesssim \abs{y}^{2} \int_{ \abs{\xi} \leq r'} \!\!\!\!\!\!\!\!\!\! \abs{\widehat{v}_k}^{2}  \dee \xi + \int_{\abs{\xi} > r'} \!\!\!\!\!\!\!\!\!\! \abs{\widehat{v}_k}^2 \dee \xi < 2\epsilon,
\end{equation*}
valid uniformly for all sufficiently small~\({y}\) and uniformly in~\({k}\). Kolmogorov--Riesz--Sudakov's compactness theorem then shows that~\({\{v_k\}_k}\) converges, up to a subsequence, in~\({\textnormal{L}^2}\), with limit which must be~\({v}\). Interpolating
\begin{equation*}
\norm{w}_{\tilde{s}}^{} \leq \norm{w}_{0}^{1 - ( \tilde{ s } / s)}  \norm{w}_{s}^{\tilde{ s }/s} \lesssim \norm{w}_{0}^{1 - (\tilde{ s } / s)}
\end{equation*}
with~\({w \coloneqq v_k - v}\) for clarity, upgrades convergence to~\({\textnormal{H}^{\tilde{ s }}}\), and by continuity of~\({\mathcal{E}}\) we are done.
\end{proof}

It remains to exclude vanishing and dichotomy. Note that there is an easily corrected flaw in the proof of vanishing in~\autocite[Lemma~5.2]{EGW2012} (the fourth inequality); for example, one may use the Gagliardo--Nirenberg inequality as in the proof of~\Cref{thm:vanishing} below, or apply Hölder's inequality together with~\({ \textnormal{H}^1 \hookrightarrow \textnormal{L}^\infty }\).
\begin{lemma} \label{thm:vanishing}
Vanishing does not occur.
\end{lemma}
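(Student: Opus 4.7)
The plan is to derive a contradiction from the strict negativity $\mathcal{N}(\widetilde u_k) \lesssim -\mu^{1+q\alpha}$ established in \Cref{thm:special-near-min-nonlinearity-bnd}: under the vanishing scenario I shall show $\mathcal{N}(\widetilde u_k) \to 0$, which is incompatible with that bound.

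The first step is to exploit the uniform a priori control $\|\widetilde u_k\|_\infty \lesssim \mu^\alpha$, which descends via~\eqref{eq:special-min-seq} from the periodic bound $\|u_{P_k}^\star\|_\infty \lesssim \mu^\alpha$ noted after \Cref{thm:wavespeed-final-bound} (recall $A_{P_k} \to 1$). For $\mu$ sufficiently small this forces $\|\widetilde u_k\|_\infty \leq A_\mu$, so the cut-off~\eqref{eq:n-cutoff} is inactive and we have the pointwise estimate $|N(\widetilde u_k(x))| \lesssim |\widetilde u_k(x)|^{2+q}$; hence $|\mathcal{N}(\widetilde u_k)| \lesssim \|\widetilde u_k\|_{\textnormal{L}^{2+q}}^{2+q}$. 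By the remark preceding this lemma I may also assume $\widetilde u_k$ is built from the $\textnormal{H}^1$-periodic minimisers, so that $\|\widetilde u_k\|_1 \lesssim \mu^{1/2}$ is at my disposal as well.

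Following the paper's hint, I would next convert vanishing into $\textnormal{L}^{2+q}$ convergence via the one-dimensional Gagliardo--Nirenberg inequality. Covering~$\R$ by unit intervals $\{I_j\}_j$ of bounded overlap and applying $\|\widetilde u_k\|_{\textnormal{L}^\infty(I_j)} \lesssim \|\widetilde u_k\|_{\textnormal{L}^2(I_j)}^{1/2}\|\widetilde u_k\|_{\textnormal{H}^1(I_j)}^{1/2}$ on each piece yields
\begin{equation*}
\int_{I_j} |\widetilde u_k|^{2+q}\,\dee x \lesssim \|\widetilde u_k\|_{\textnormal{L}^2(I_j)}^{2+q/2}\|\widetilde u_k\|_{\textnormal{H}^1(I_j)}^{q/2};
\end{equation*}
summing and pulling out the sup-factors then gives
\begin{equation*}
\|\widetilde u_k\|_{\textnormal{L}^{2+q}}^{2+q} \lesssim \Bigl(\sup_{y \in \R}\|\widetilde u_k\|_{\textnormal{L}^2(B_1(y))}\Bigr)^{q/2}\|\widetilde u_k\|_1^{q/2}\|\widetilde u_k\|_0^2,
\end{equation*}
which tends to zero by vanishing together with the uniform $\textnormal{H}^1$ and $\textnormal{L}^2$ bounds. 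This produces $\mathcal{N}(\widetilde u_k) \to 0$ and delivers the desired contradiction.

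The main technical obstacle is precisely this Gagliardo--Nirenberg step: the local embedding $\textnormal{H}^s(I) \hookrightarrow \textnormal{L}^\infty(I)$ fails for $s \leq \tfrac{1}{2}$, blocking the simpler route of bounding $\|\widetilde u_k\|_{\textnormal{L}^\infty(I_j)}$ directly by $\|\widetilde u_k\|_{\textnormal{H}^s(I_j)}$; the stronger $\textnormal{H}^1$ regularity available to the special minimising sequence is what lets a single estimate close the argument uniformly in~$s$.
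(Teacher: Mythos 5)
Your argument is correct and follows essentially the same route as the paper: you aim to contradict the bound $\mathcal{N}(\widetilde u_k) \lesssim -\mu^{1+q\alpha}$ from the preceding lemma, you localise onto a bounded-overlap cover of unit scale, and you use a Gagliardo--Nirenberg/Agmon-type interpolation made available by the extra $\textnormal{H}^1$ regularity of the special minimising sequence (exactly as flagged in the remark before the lemma). The paper phrases the local estimate as $\norm{v}_{\textnormal{L}^{2+q}}^{2+q} \lesssim \norm{v}_s^{2}\norm{v}_0^{q}$ after an extra interpolation step, whereas you bound $\norm{v}_{\textnormal{L}^\infty}$ directly and pull out $(\sup_j\norm{\widetilde u_k}_{\textnormal{L}^2(I_j)})^{q/2}$ rather than that quantity to the power $q$; this is a cosmetic difference, and if anything yours avoids the implicit restriction $s\geq q/4$ needed for the paper's extra inequality.
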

\vspace*{-1em} 
\enlargethispage{.5\baselineskip} 
\begin{proof}
Seeking to contradict~\Cref{thm:special-near-min-nonlinearity-bnd}, we first observe that
\begin{equation*}
\abs{\calN(u_k)} \lesssim \norm{u_k}_{\textnormal{L}^{2 + q}}^{2 + q} \eqsim \sum_{j \in \Z} \norm{u_{k, j}}_{\textnormal{L}^{2 + q}}^{2 + q},
\end{equation*}
where~\({u_{k, j} \coloneqq u_k \varphi_j}\) and~\({\Set{\varphi_j}_j}\) is a smooth partition of unity with~\({\varphi_j(x) \equiv 1}\) for~\({ \abs{x - j} \leq \tfrac{1}{4}}\) and \({\support \varphi_j = \bigl[j - \frac{3}{4}, j + \frac{3}{4}\bigr]}\). Let~\({v}\) equal any~\({u_{k, j}}\). Estimating
\begin{equation*}
\norm{v}_{\textnormal{L}^{2 + q}}^{2 + q} \lesssim \norm{v}_{s}^{q / 2s} \norm{v}_{0}^{2 + q - (q/2s)} \leq \norm{v}_{s}^{2} \norm{v}_{0}^{q},
\end{equation*}
by the Gagliardo--Nirenberg inequality, valid since~\({2s > q / (2 + q)}\) always holds for the chosen special minimising sequence, it then follows that
\begin{equation*}
\abs{\calN(u_k)} \lesssim \Biggl( \sup_{j \in \Z} \int_{\R} \abs{u_{k, j}}^2 \dee x \Biggr)^{q/2} \underbrace{\sum_{j \in \Z} \norm{u_{k, j}}_{s}^{2}}_{\eqsim \norm{u_k}_{s}^2 < R^2} \xrightarrow[k \to \infty]{}  0
\end{equation*}
if~\({\Set{u_k}_k}\) vanishes, which is absurd.
\end{proof}

Suppose now that dichotomy occurs, so that~\({ \Set{ u_k }_k }\) admits decomposing sequences~\({\Set[\big]{u_k^{(1)}}_k}\),~\({\Set[\big]{u_k^{(2)}}_k}\), with
\begin{equation} \label{eq:dichotomy-splitting}
u_k^{(1)} \in U_{\frac{\theta}{2}}^s, \qquad u_k^{(2)} \in U_{\mu - \frac{\theta}{2}}^s \qquad \text{and} \qquad u_k^{(1)} + u_k^{(2)} \in U_\mu^s \qquad \text{for all~\({k}\);}
\end{equation}
see the proof of~\Cref{thm:dichotomy}. If separation of~\({u_k^{(1)}}\) and~\({u_k^{(2)}}\) leads to the energetic decomposition
\begin{equation}
\label{eq:energetic-decomp}
\lim_{k} \left[ \calE \bigl( u_k^{(1)} +  u_k^{(2)} \bigr) - \calE \bigl( u_k^{(1)} \bigr) - \calE \bigl( u_k^{(2)} \bigr) \right] = 0,
\end{equation}
then subsequently
\begin{equation*}
\lim_{k} \left[ \calE \bigl( u_k^{(1)} \bigr) + \calE \bigl( u_k^{(2)} \bigr) \right] = \lim_{k} \calE(u_k) = I_\mu,
\end{equation*}
using that
\begin{equation*}
\abs[\big]{\calE(u_k) - \calE \bigl( u_k^{(1)} +  u_k^{(2)} \bigr)} \leq \sup_{u \in U_\mu^s} \norm{\calE'(u)}_{0}  \norm[\big]{u_k^{} - u_k^{(1)} - u_k^{(2)}}_{0} \to  0
\end{equation*}
from property~\labelcref{thm:cc-dichotomy-convergence} and boundedness of~\({\norm{\calE'(u)}_{0}}\) on~\({U_\mu^s}\). In light of strict subadditivity of~\({\mu \mapsto I_\mu}\), we~then get the contradiction
\begin{equation*}
I_\mu < I_\frac{\theta}{2} + I_{\mu - \frac{\theta}{2}} \leq \lim_{k} \left[ \calE \bigl( u_k^{(1)} \bigr) + \calE \bigl( u_k^{(2)} \bigr) \right] = I_\mu.
\end{equation*}

Accordingly, it suffices to establish~\eqref{eq:energetic-decomp}. And to this end, note that since~\({\calN}\)~is a local operator, it eventually splits as
\begin{equation*}
\calN \bigl( u_k^{(1)} +  u_k^{(2)} \bigr) = \calN \bigl( u_k^{(1)} \bigr) + \calN \bigl( u_k^{(2)} \bigr),
\end{equation*}
whereas~\({\calL}\) satisfies
\begin{equation*}
\calL \bigl( u_k^{(1)} +  u_k^{(2)} \bigr) = \calL \bigl( u_k^{(1)} \bigr) + \calL \bigl( u_k^{(2)} \bigr) - \innerproduct[\big]{L u_k^{(1)}}{u_k^{(2)}}_{0}.
\end{equation*}
In order to show that the nonlocal interaction disappears as~\({ k \to \infty }\), one can introduce certain commutators and prove that their operator norms vanish~\autocite{Mae2019a}. Based on uniform continuity of~\({ \xi \mapsto m(\xi) / \langle \xi \rangle^{s} }\), which holds automatically in our case, this is applicable for a large class of~symbols. For~convolution operators, however, it seems more enlightening to work directly on the \enquote{physical side}, assuming just integrability of the~kernel.

\begin{lemma} \label{thm:convolution-dichotomy}
Let~\({ f \in \textnormal{L}^1 }\) and~\({ \Set{ v_k }_k, \Set{ w_k }_k \subset \textnormal{L}^2 }\) be bounded and satisfy
\begin{equation*}
\support v_k = \Set*{ \abs{ x } \leq  A_k } \qquad \text{and} \qquad \support w_k = \Set*{ \abs{ x } \geq B_k }
\end{equation*}
for~\({ 0 \leq A_k, B_k \xrightarrow[ k \to \infty ]{  } \infty}\) with~\({ B_k - A_k \to \infty }\). Then~\({ \innerproduct{ f \ast v_k }{ w_k }_0 \xrightarrow[ k \to \infty ]{  } 0 }\).
\end{lemma}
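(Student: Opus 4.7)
The plan is to bound the inner product by Cauchy--Schwarz,
\[
\abs*{\innerproduct{f \ast v_k}{w_k}_0} \leq \norm{f \ast v_k}_{\textnormal{L}^2\left(\Set*{\abs{x} \geq B_k}\right)} \norm{w_k}_{0},
\]
and thereby reduce everything to showing that \({\norm{f \ast v_k}_{\textnormal{L}^2(\Set{\abs{x} \geq B_k})} \to 0}\), since~\({\Set{w_k}_k}\) is bounded in~\({\textnormal{L}^2}\). The intuition is that the convolution \({f \ast v_k}\) requires the kernel \({f(x - y)}\) to act across a distance of at least~\({B_k - A_k}\), which tends to infinity, so only the tail of \({f}\) matters---and this tail is arbitrarily small in~\({\textnormal{L}^1}\) by integrability.

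Concretely, I would fix \({\epsilon > 0}\) and, using~\({f \in \textnormal{L}^1}\), pick~\({R > 0}\) such that~\({ \norm{f \mathbf{1}_{\Set{\abs{z} > R}}}_{\textnormal{L}^1} < \epsilon }\). Split the kernel as~\({f = f_R + f^R}\) with~\({f_R \coloneqq f \mathbf{1}_{\Set{\abs{z} \leq R}}}\) and~\({f^R \coloneqq f \mathbf{1}_{\Set{\abs{z} > R}}}\). For any~\({ x }\) with~\({ \abs{x} \geq B_k }\) and any~\({ y \in \support v_k \subseteq \Set*{\abs{y} \leq A_k} }\), we have~\({ \abs{x - y} \geq B_k - A_k }\), so as soon as~\({k}\) is large enough that~\({ B_k - A_k > R }\), it holds that \({f_R(x - y) = 0}\), and hence
\[
(f \ast v_k)(x) = (f^R \ast v_k)(x) \qquad \text{for a.e.\@ } \abs{x} \geq B_k.
\]

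Young's convolution inequality then gives
\[
\norm{f \ast v_k}_{\textnormal{L}^2(\Set{\abs{x} \geq B_k})} \leq \norm{f^R \ast v_k}_{0} \leq \norm{f^R}_{\textnormal{L}^1} \norm{v_k}_{0} < \epsilon \sup\nolimits_k \norm{v_k}_{0}.
\]
Since the right-hand side is \({\mathcal{O}(\epsilon)}\) uniformly in~\({k}\) (for all large~\({k}\)) and~\({\epsilon}\) is arbitrary, combining with the Cauchy--Schwarz bound above yields \({\innerproduct{f \ast v_k}{w_k}_0 \to 0}\). There is no real obstacle here: the only subtlety is noticing that the hypothesis~\({B_k - A_k \to \infty}\)---rather than merely \({A_k, B_k \to \infty}\)---is precisely what permits the truncation radius~\({R}\) to be chosen before~\({k}\), so that the \enquote{near} part of~\({f}\) is eventually irrelevant. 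Note that no regularity or smoothness on~\({f}\) beyond \({\textnormal{L}^1}\) is needed, consistent with the paper's philosophy that convolution with an integrable kernel suffices for such nonlocal estimates.
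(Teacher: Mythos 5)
Your proof is correct and hinges on the same geometric observation as the paper's—that because of the support gap $B_k - A_k \to \infty$, the convolution only sees the $\textnormal{L}^1$-small tail of $f$—but the route to the estimate is genuinely different. The paper unwinds the \emph{proof} of Young's inequality (a Cauchy--Schwarz step inside the convolution integral) to produce the double-integral bound
\begin{equation*}
\norm{ f \ast v_k }_{ \textnormal{L}^2(\support w_k) }^2 \leq \norm{ f }_{ \textnormal{L}^1 } \int_{\support w_k} \int_{\support v_k} \abs{ f(x-y) } \, \abs{ v_k(y) }^2 \dee y \dee x,
\end{equation*}
then applies Fubini and the containment $\Set{ \abs{ x + y } \geq B_k } \subseteq \Set{ \abs{ x } \geq B_k - A_k }$ for $\abs{ y } \leq A_k$ to localise the remaining integral of $\abs{ f }$ onto the tail $\Set{ \abs{ x } \geq B_k - A_k }$. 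You instead truncate the kernel at a fixed radius $R$ (determined by $\epsilon$) \emph{before} convolving, observe that the near part $f_R$ can never connect $\support v_k$ to $\support w_k$ once $B_k - A_k > R$, and then apply the \emph{statement} of Young's inequality $\textnormal{L}^1 \ast \textnormal{L}^2 \hookrightarrow \textnormal{L}^2$ to the far part $f^R$. This avoids the Fubini manipulation and the reference to the internals of Young's proof, and is arguably a cleaner reduction. Quantitatively the two bounds agree: your $\norm{ f^R }_{\textnormal{L}^1}$ is $\int_{ \Set{ \abs{z} > R } } \abs{ f }$, the paper's bound is governed by $\bigl( \int_{ \Set{ \abs{z} \geq B_k - A_k } } \abs{ f } \bigr)^{1/2}$, and both tend to zero by integrability of $f$; indeed, applying your truncation idea with the $k$-dependent radius $R_k = B_k - A_k$ would recover the paper's explicit rate directly, without the $\epsilon$--$R$ bookkeeping.
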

\begin{proof}
An inspection of the proof of Young's inequality~\autocite[20.3.2~Proposition]{GasquetWitomski1998} shows that
\begin{equation*}
\abs*{\innerproduct{ f \ast v_k }{ w_k }_0}^2 \leq \norm{ f \ast v_k }_{ \textnormal{L}^2(\support w_k) }^2 \norm{ w_k }_{ 0 }^2  \leq \norm{ f }_{ \textnormal{L}^1 } \norm{ w_k }_{ 0 }^2  \!\!\!\int\displaylimits_{ \support w_k }  \!\!\!\!\!\!\!\!\!\!\!\!\!\!\!\!  \int\displaylimits_{ \qquad\;\;\; \support v_k } \!\!\!\!\!\!\!\!\!\!\!\! \abs{ f(x - y) } \, \abs{ v_k(y) }^{ 2 } \dee y \dee x
\end{equation*}
with help of the Cauchy--Schwarz inequality. Changing the order of integration then yields
\begin{equation*}
\int\limits_{\mathclap{  \Set*{ \abs{ x } \geq B_k } }} \int_{- A_k}^{A_k} \! \abs{ f(x - y) } \, \abs{ v_k(y) }^{ 2 } \dee y \dee x = \int_{ -A_k }^{ A_k } \! \abs{ v_k(y) }^{ 2 } \int\limits_{\mathclap{ \Set*{ \abs{ x + y } \geq B_k } } } \abs{ f(x) } \dee x \dee y,
\end{equation*}
and so, since~\({ \Set*{ \abs{ x + y } \geq B_k } \subseteq \Set*{ \abs{ x } \geq B_k - A_k } }\) for all~\({ y \in [-A_k, A_k] }\), we end up with
\begin{equation*}
\abs*{\innerproduct{ f \ast v_k }{ w_k }_0}^2 \leq \norm{ f }_{ \textnormal{L}^1} \norm{ v_k }_{ 0 }^2 \norm{ w_k }_{ 0 }^2 \int\limits_{\mathclap{  \Set*{ \abs{ x } \geq B_k - A_k } } } \abs{ f(x) } \dee x \xrightarrow[ k \to \infty ]{  } 0.
\end{equation*}
\end{proof}
\begin{corollary} \label{thm:dichotomy}
Dichotomy does not occur when~\({ \mu \in (0, \mu_\star) }\), with~\({ \mu_\star }\) as in~\Cref{thm:strict-subhomogeneity}.
\end{corollary}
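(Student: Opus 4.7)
The plan is to apply \Cref{thm:cc} to the special minimising sequence $\Set{u_k}_k$ with $\lambda = 2\mu$, at which point concentration would already furnish the desired minimiser via \Cref{thm:concentrates} and vanishing is excluded by \Cref{thm:vanishing}. Hence the task reduces to ruling out dichotomy. Suppose it occurs. After the rescaling indicated below \Cref{thm:cc}, I obtain $\theta \in (0, 2\mu)$ together with decomposing sequences $\Set[\big]{u_k^{(1)}}_k$ and $\Set[\big]{u_k^{(2)}}_k$ satisfying properties \labelcref{thm:cc-dichotomy-convergence}--\labelcref{thm:cc-dichotomy-seminorm} with $\norm[\big]{u_k^{(1)}}_0^2 = \theta$ and $\norm[\big]{u_k^{(2)}}_0^2 = 2\mu - \theta$. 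Setting $\tilde\theta \coloneqq \theta/2 \in (0, \mu)$, this gives $\calQ\bigl(u_k^{(1)}\bigr) = \tilde\theta$ and $\calQ\bigl(u_k^{(2)}\bigr) = \mu - \tilde\theta$, which are the mass values targeted by strict subadditivity.

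Before invoking subadditivity I must confirm that $u_k^{(i)}$ lands in the open ball defining $U_{P, \mu}^s$, i.e.~$\norm[\big]{u_k^{(i)}}_s < R$. Here I would combine \Cref{thm:special-min-seq-for-E}, which gives $\norm{u_k}_s^2 \lesssim \mu$ uniformly, with the seminorm property~\labelcref{thm:cc-dichotomy-seminorm}: since both $[u_k^{(i)}]_s^2$ are non-negative, each is bounded asymptotically by $[u_k]_s^2 \leq \norm{u_k}_s^2 \lesssim \mu$, so together with $\norm[\big]{u_k^{(i)}}_0^2 \leq 2\mu$ one has $\norm[\big]{u_k^{(i)}}_s^2 \lesssim \mu < R^2$ for all sufficiently small $\mu$. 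Thus $u_k^{(1)} \in U_{\tilde\theta}^s$ and $u_k^{(2)} \in U_{\mu - \tilde\theta}^s$ for $k$ large, as in~\eqref{eq:dichotomy-splitting}.

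The heart of the proof is then to establish the energetic decomposition~\eqref{eq:energetic-decomp}. By locality of $n$ and the separated supports from property~\labelcref{thm:cc-dichotomy-support}, for $k$ large one has $\calN\bigl(u_k^{(1)} + u_k^{(2)}\bigr) = \calN\bigl(u_k^{(1)}\bigr) + \calN\bigl(u_k^{(2)}\bigr)$ pointwise in the integrand; this piece is immediate. The genuine obstacle is the nonlocal cross-term
\[
\calL \bigl( u_k^{(1)} + u_k^{(2)} \bigr) - \calL\bigl(u_k^{(1)}\bigr) - \calL\bigl(u_k^{(2)}\bigr) = -\innerproduct{Lu_k^{(1)}}{u_k^{(2)}}_0,
\]
which I would dispatch by invoking \Cref{thm:convolution-dichotomy} with $f = K / \sqrt{2\uppi}$, $v_k = u_k^{(1)}$ and $w_k = u_k^{(2)}$: the support data in~\labelcref{thm:cc-dichotomy-support} provides $A_k, B_k \to \infty$ with $A_k/B_k \to 0$, so $B_k - A_k \to \infty$, and the hypotheses of the lemma are met. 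Thus the cross-term vanishes as $k \to \infty$.

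Putting everything together with property~\labelcref{thm:cc-dichotomy-convergence} and the uniform bound on $\norm{\calE'(u)}_0$ over $U_\mu^s$ (as used in the proof of \Cref{thm:convergence-periodic-solitary-infimum}), I would conclude $\calE(u_k) - \calE\bigl(u_k^{(1)}\bigr) - \calE\bigl(u_k^{(2)}\bigr) \to 0$, so
\[
I_\mu = \lim_k \calE(u_k) = \lim_k \Bigl[ \calE\bigl(u_k^{(1)}\bigr) + \calE\bigl(u_k^{(2)}\bigr)\Bigr] \geq I_{\tilde\theta} + I_{\mu - \tilde\theta}.
\]
For $\mu \in (0, \mu_\star)$ this contradicts the strict subadditivity that follows from \Cref{thm:strict-subhomogeneity}, completing the proof. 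The chief technical subtlety, beyond the straightforward bookkeeping, is verifying that the purely nonlocal interaction $\innerproduct{Lu_k^{(1)}}{u_k^{(2)}}_0$ disappears under only $\textnormal{L}^1$-integrability of the convolution kernel; this is precisely the content of \Cref{thm:convolution-dichotomy}.
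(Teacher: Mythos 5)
Your proof is correct and takes essentially the same approach as the paper: the crux---applying \Cref{thm:convolution-dichotomy} to kill the nonlocal cross-term $\innerproduct[\big]{L u_k^{(1)}}{u_k^{(2)}}_{0}$ after using locality of $\mathcal{N}$, then contradicting strict subadditivity---is identical. The only minor variation is in verifying $\norm[\big]{u_k^{(i)}}_s < R$: you exploit non-negativity of the seminorm $[\cdot]_s^2$ together with the uniform bound $\norm{u_k}_s^2 \lesssim \mu$ from \Cref{thm:special-min-seq-for-E}, whereas the paper argues via the triangle inequality and eventual support separation; both routes are valid and yours is arguably the cleaner of the two.
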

\begin{proof}
Contrariwise, assume the existence of decomposing sequences~\({\Set[\big]{u_k^{(1)}}\vphantom{u}_k^{}}\) and~\({\Set[\big]{u_k^{(2)}}\vphantom{u}_k^{}}\) from \Cref{thm:cc}, rescaled to satisfy~\({\norm[\big]{u_k^{(1)}}_{0}^2 = \theta}\) and~\({\norm[\big]{u_k^{(2)}}_{0}^2 = 2\mu - \theta}\) for all~\({k}\). Flipping signs in property~\labelcref{thm:cc-dichotomy-seminorm} shows that
\begin{equation*}
\limsup\nolimits_k^{} \,\bigl[ u_k^{(1)} + u_k^{(2)} \bigr]_s^2 \leq \limsup\nolimits_k^{} \, \bigl[ u_k^{} \bigr]_s^2
\end{equation*}
with help of the triangle inequality, which in combination with property~\labelcref{thm:cc-dichotomy-convergence} give
\begin{equation*}
\limsup\nolimits_k^{} \norm[\big]{u_k^{(1)} + u_k^{(2)}}_{s} \leq \limsup\nolimits_k^{} \norm[\big]{u_k^{}}_{s} < R.
\end{equation*}
Since~\({u_k^{(1)}}\) and~\({u_k^{(2)}}\) eventually separate (property~\labelcref{thm:cc-dichotomy-support}), we also obtain
\begin{equation*}
\limsup\nolimits_k^{} \norm[\big]{u_k^{(j)}}_{s} \leq \limsup\nolimits_k^{} \norm[\big]{u_k^{(1)} + u_k^{(2)}}_{s}, \qquad j = 1, 2,
\end{equation*}
and so, without loss of generality, we may assume~\eqref{eq:dichotomy-splitting}.

Following the discussion prior to~\Cref{thm:convolution-dichotomy}, it remains to show that~\({ \innerproduct[\big]{L u_k^{(1)}}{u_k^{(2)}}_{0} \xrightarrow[ k \to \infty ]{  } 0 }\). But~this is immediate from~\Cref{thm:convolution-dichotomy} and~property~\labelcref{thm:cc-dichotomy-support} after spatial translations~\({ x \mapsto x - x_k }\).
\end{proof}

\enlargethispage{-\baselineskip} 

We conclude from~\Cref{thm:concentrates,thm:vanishing} and \Cref{thm:dichotomy} that~\({ \mathcal{E} }\) has a minimiser over~\({ U_\mu^s }\). Combined with the estimates in~\Cref{thm:wave-speed-lower-bound-solitary,thm:bound-Hs-norm-mu} and \Cref{thm:wavespeed-final-bound}, we deduce, similarly as in the periodic case, that
\begin{equation*}
\nu - \mathfrak{m}(0) \eqsim \mu^{q \alpha} \eqsim \norm{ u }_{ \infty }^q.
\end{equation*}
This completes the proof of~\Cref{thm:existence-solitary}.

\section{Additional features} \label{sec:additional}

As a consequence of the analysis in the proof of \Cref{thm:bound-Hs-norm-mu}, we obtain a nonexistence result for small solitary waves in \({ \textnormal{H}^s \cap \textnormal{L}^\infty }\) when the nonlinearity is too strong, which demonstrates the optimality of \({ q < 4 \ell }\) in Assumptions~\ref{assumption:dispersion} and~\ref{assumption:nonlinearity}. 

\begin{theorem}[Nonexistence] \label{thm:nonexistence}
Let~\({ s > 0 }\) be as in~\eqref{eq:sobolev-index}. If~\({ q \geq 4\ell }\) in Assumptions~\ref{assumption:dispersion} and~\ref{assumption:nonlinearity}, then there are no nonzero solutions \({ u \in \textnormal{H}^s \cap \textnormal{L}^\infty }\) of equation~\eqref{eq:traveling-wave} with speed~\({\!\nu }\) satisfying~\({ \!\nu - \mathfrak{m}(0) \gtrsim -\norm{ u }_{ \infty }^{ q }  }\) provided~\({ \norm{ u }_{ s }  }\) and~\({ \norm{ u }_{ \infty }  }\) are sufficiently small. In~particular, this excludes small solitary waves in \({ \textnormal{H}^s \cap \textnormal{L}^\infty }\) with supercritical speed when~\({ q \geq 4 \ell }\).
\end{theorem}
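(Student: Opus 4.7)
I would adapt the frequency-decomposition argument from the proof of \Cref{thm:bound-Hs-norm-mu} to show that any hypothetical small nonzero solution must satisfy a bound of the form \({ \norm{u}_\infty \leq \varepsilon \norm{u}_\infty }\) with \({ \varepsilon < 1 }\), which is impossible.

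Suppose for contradiction that \({ u \in \textnormal{H}^s \cap \textnormal{L}^\infty }\) is a nonzero solution of~\eqref{eq:traveling-wave} with \({ \nu - \mathfrak{m}(0) \geq - C_0 \norm{u}_\infty^{q} }\) and \({ \norm{u}_s, \norm{u}_\infty }\) sufficiently small. Apply the splitting~\eqref{eq:freq-decomp} and use that \({ u }\) solves the actual equation (no error term is present) to obtain
\begin{equation*}
(\nu - L) u_{\textnormal{lo}} = n(u)_{\textnormal{lo}} \qquad\text{and}\qquad (\nu - L) u_{\textnormal{hi}} = n(u)_{\textnormal{hi}}.
\end{equation*}
Testing the full equation in \({ \textnormal{L}^\infty }\) with \({ K \in \textnormal{L}^1 }\) automatically bounds \({ \nu }\); combined with the hypothesis and small \({ \norm{u}_\infty }\), this puts \({ \nu }\) so close to \({ \mathfrak{m}(0) }\) that \({ \nu > (\tau + \epsilon) \mathfrak{m}(0) }\) for some fixed \({ \epsilon > 0 }\). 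Hence \({ \nu - \mathfrak{m}(\xi) \geq \epsilon \mathfrak{m}(0) }\) on \({ \support(1 - \varphi) }\), so the resolvent \({ \mathscr{F}^{-1}[(\nu - \mathfrak{m})^{-1}(1 - \varphi) \mathscr{F}] }\) is uniformly bounded on \({ \textnormal{H}^s }\), and the fractional chain rule (\Cref{thm:chain-rule-Hs}) yields
\begin{equation*}
\norm{u_{\textnormal{hi}}}_s \lesssim \norm{n(u)}_s \lesssim \norm{u}_\infty^{q} \norm{u}_s.
\end{equation*}
Repeating the \({ \textnormal{L}^\infty }\)-bootstrap from the proof of \Cref{thm:bound-Hs-norm-mu}---using \({ \norm{Lu_{\textnormal{hi}}}_\infty \lesssim \norm{u_{\textnormal{hi}}}_{s + \abs{\sigma}} \lesssim \norm{u_{\textnormal{hi}}}_s }\) (valid since \({ s + \abs{\sigma} > \tfrac{1}{2} }\) by~\eqref{eq:sobolev-index}) and \({ \norm{n(u)_{\textnormal{hi}}}_\infty \lesssim \norm{n(u)}_\infty \lesssim \norm{u}_\infty^{1+q} }\)---this gives \({ \norm{u_{\textnormal{hi}}}_\infty \lesssim \norm{u}_\infty^{q} \norm{u}_s + \norm{u}_\infty^{1 + q} }\).

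For \({ u_{\textnormal{lo}} }\), the Maclaurin expansion of \({ \mathfrak{m} }\) together with the speed hypothesis yields \({ \nu - \mathfrak{m}(\xi) \geq c_1 \xi^{2\ell} - C_0 \norm{u}_\infty^{q} }\) on \({ \Set*{\abs{\xi} < \xi_0} }\), whence \({ c_1 \xi^{2\ell} \leq \abs{\nu - \mathfrak{m}(\xi)} + C_0 \norm{u}_\infty^{q} }\) pointwise. Multiplying by \({ \abs{\widehat{u}_{\textnormal{lo}}(\xi)} }\), taking \({ \textnormal{L}^2 }\) norms, and repeating the reasoning of~\eqref{eq:optimal-inf-estimate1}--\eqref{eq:optimal-inf-estimate4} produce \({ \norm{u_{\textnormal{lo}}^{(2\ell)}}_0 \lesssim \norm{u}_\infty^{q} \norm{u}_0 }\); Gagliardo--Nirenberg then gives
\begin{equation*}
\norm{u_{\textnormal{lo}}}_\infty \lesssim \norm{u_{\textnormal{lo}}}_0^{1 - 1/(4\ell)} \norm{u_{\textnormal{lo}}^{(2\ell)}}_0^{1/(4\ell)} \lesssim \norm{u}_0 \norm{u}_\infty^{q/(4\ell)}.
\end{equation*}

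Summing the low- and high-frequency estimates and using \({ \norm{u}_0 \leq \norm{u}_s }\),
\begin{equation*}
\norm{u}_\infty \lesssim \norm{u}_s \norm{u}_\infty^{q/(4\ell)} + \norm{u}_\infty^{q} \norm{u}_s + \norm{u}_\infty^{1 + q}.
\end{equation*}
Since \({ q \geq 4\ell }\), we have \({ q/(4\ell) \geq 1 }\), so each term on the right factors as \({ \norm{u}_\infty }\) multiplied by a quantity of the form \({ \norm{u}_s^{a} \norm{u}_\infty^{b} }\) with \({ a + b > 0 }\). Choosing \({ \norm{u}_s }\) and \({ \norm{u}_\infty }\) sufficiently small drives this prefactor strictly below \({ 1 }\), which forces \({ \norm{u}_\infty = 0 }\) and hence \({ u = 0 }\)---a contradiction. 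The solitary-wave statement is immediate because a supercritical \({ \nu > \mathfrak{m}(0) }\) trivially satisfies \({ \nu - \mathfrak{m}(0) > 0 \geq -\norm{u}_\infty^{q} }\). The main technical obstacle is the \({ \textnormal{L}^\infty }\)-bootstrap for \({ u_{\textnormal{hi}} }\) in the borderline range \({ s \leq \tfrac{1}{2} }\), where \({ \textnormal{H}^s \not\hookrightarrow \textnormal{L}^\infty }\); this is exactly the step already engineered in the proof of \Cref{thm:bound-Hs-norm-mu} and is what makes the lower bound \({ s > \tfrac{1}{2} - \abs{\sigma} }\) in~\eqref{eq:sobolev-index} essential.
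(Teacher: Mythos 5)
Your proof is correct and follows essentially the same route as the paper: decompose \( u = u_{\textnormal{lo}} + u_{\textnormal{hi}} \), invert \( \nu - L \) on the high part via the bounded resolvent and the fractional chain rule, use the Maclaurin expansion together with Gagliardo--Nirenberg on the low part, and combine to obtain a self-improving bound \( \norm{u}_\infty \lesssim (\text{small prefactor}) \cdot \norm{u}_\infty \) that forces \( u = 0 \) when \( q \geq 4\ell \). The paper organizes the final step as a case distinction on whether the low- or high-frequency component dominates in \( \textnormal{L}^\infty \) whereas you simply sum the two contributions --- a cosmetic reorganization that works equally well --- and your aside about testing the equation in \( \textnormal{L}^\infty \) to control \( \nu \) from above is harmless but superfluous, since only the lower bound \( \nu \geq \mathfrak{m}(0) - C\norm{u}_\infty^{q} \) from the hypothesis is actually used to make \( \nu - \mathfrak{m} \) invertible on \( \support(1 - \varphi) \).
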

\begin{proof}
We split~\({ u }\) into~\({ u_{ \textnormal{lo}} }\) and~\({ u_{ \textnormal{hi}} }\) exactly as in~\eqref{eq:freq-decomp}, so that~\eqref{eq:low-frequency-comp}–\eqref{eq:high-frequency-comp} hold with~\({ \mathcal{E}'(u) + \nu \mathcal{Q}'(u) \equiv 0 }\). Closely following the proof of~\Cref{thm:bound-Hs-norm-mu}, suppose first that~\({ \norm{ u_{ \textnormal{hi}} }_{ \infty } \geq \norm{ u_{ \textnormal{lo}} }_{ \infty }  }\). Without repeating the calculations we then obtain from~\eqref{eq:high-freq-comp-geometric} that
\begin{equation*}
\norm{ u_{ \textnormal{hi}} }_{ \infty } \lesssim \norm{ u }_{ s } \norm{ u_{ \textnormal{hi}} }_{ \infty }^{ q },
\end{equation*}
provided~\({ \norm{ u }_{ \infty }  }\) is sufficiently small. Since~\({ q \geq 4\ell \geq 1 }\) in this scenario, we deduce that~\({ u = 0 }\) if~\({ \norm{ u }_{ s }  }\) is sufficiently small.

Suppose instead that~\({ \norm{ u_{ \textnormal{lo}} }_{ \infty } > \norm{ u_{ \textnormal{hi}} }_{ \infty }  }\). Due to~\({ \!\nu - \mathfrak{m}(0) \gtrsim - \norm{ u }_{ \infty }^{ q }  }\), estimate~\eqref{eq:wavespeed-nearzero-inf-bnd} now becomes
\begin{equation*}
\nu - \mathfrak{m}(\xi) + \frac{ c \mathfrak{m}^{(2\ell)}(0)}{(2 \ell)!} \xi^{2 \ell} \gtrsim - \norm{ u }_{ \infty }^q
\end{equation*}
for some~\({ c > 0 }\) when~\({ \abs{ \xi } < \xi_0 }\). By redoing estimates~\eqref{eq:optimal-inf-estimate1}–\eqref{eq:optimal-inf-estimate4} with the appropriate modifications, one obtains
\begin{equation*}
\norm{ u_{ \textnormal{lo}}^{(2\ell)} }_{ 0 } \lesssim \norm{ u }_{ 0 } \norm{ u }_{ \infty }^{ q } \leq  \norm{ u }_{ s } \norm{ u }_{ \infty }^{ q } 
\end{equation*}
for sufficiently small~\({ \norm{ u }_{ \infty }  }\), which implies that
\begin{equation*}
\norm{ u }_{ \infty } \lesssim \norm{ u_{ \textnormal{lo}} }_{ \infty } \lesssim \norm{ u_{ \textnormal{lo}} }_{ 0 }^{1 - \frac{1}{4\ell}} \norm{ u_{ \textnormal{lo}}^{(2\ell)} }_{ 0 }^{ \frac{1}{4\ell} } \lesssim \norm{ u }_{ s }  \norm{ u }_{ \infty }^{\frac{q}{4\ell}}
\end{equation*}
by the Gagliardo--Nirenberg inequality. If~\({ \norm{ u }_{ \infty } \leq 1 }\), then for sufficiently small~\({ \norm{ u }_{ s }  }\) we conclude that \({ u = 0 }\) is the only possibility when~\({ q \geq 4\ell }\).
\end{proof}

We finally establish with a basic argument that bounded solutions of~\eqref{eq:traveling-wave} with supercritical speed are either waves of elevation or waves of depression in the special case when the convolution kernel~\({ K }\) is nonnegative. This result is already known for the Whitham equation~\autocite[Corollary~4.4]{EhrWah2019a}.

\begin{theorem}[Sign of wave profile] \label{thm:wave-sign}
Suppose~\({ K }\) is nonnegative and let~\({ u \neq 0 }\) be a~bounded solution of~\eqref{eq:traveling-wave} with supercritical wave speed~\({\!\nu > \mathfrak{m}(0) }\). If~\({ n }\) is homogeneous, then~\({ u }\) has a one-sided profile with~\({ \sign u = \sign \gamma }\) almost everywhere, where~\({ \gamma }\) is as in Assumption~\ref{assumption:nonlinearity}. The~same conclusion also holds for inhomogeneous~\({ n }\) when~\({ \norm{ u }_{ \infty }  }\) is sufficiently~small.
\end{theorem}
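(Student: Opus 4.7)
The plan is to combine a positivity-preserving representation of~\((\nu - L)^{-1}\) with a pointwise maximum-principle argument. Since $\mathfrak{m} = \widehat{K}$ with $K \geq 0$ gives $\abs{\mathfrak{m}(\xi)} \leq \mathfrak{m}(0) < \nu$, the Neumann expansion $(\nu - L)^{-1} = \nu^{-1} \sum_{k \geq 0} \nu^{-k} L^k$ converges on~\(\textnormal{L}^\infty\) and identifies $(\nu - L)^{-1}$ as convolution with the nonnegative Radon measure $G_\nu \coloneqq \mathscr{F}^{-1}\bigl( 1 / (\nu - \mathfrak{m}) \bigr)$---a Dirac mass at the origin plus iterated self-convolutions of~$K$---of total mass $\sqrt{2\uppi} / (\nu - \mathfrak{m}(0))$. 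The traveling-wave equation then reads $u = G_\nu \ast n(u)$, and $(\nu - L)^{-1}$ preserves pointwise inequalities.

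In the even homogeneous case $n_q(x) = \gamma \abs{x}^{1+q}$, the sign of $n(u)$ is already $\sign \gamma$ pointwise, hence so is that of $u = G_\nu \ast n(u)$ almost everywhere. Equivalently, a direct maximum principle works: picking $\Set{ x_k }_k \subset \R$ with $u(x_k) \to m \coloneqq \inf u$ and using $L u(x_k) \geq m\, \mathfrak{m}(0)$ from $K \geq 0$ and $u \geq m$, the equation yields $(\nu - \mathfrak{m}(0)) m \geq n(m)$ in the limit; if $\gamma > 0$ and $m < 0$ this contradicts $n(m) = \gamma \abs{m}^{1+q} > 0$, and the case $\gamma < 0$ is symmetric via a maximising sequence.

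For the odd homogeneous case $n_q(x) = \gamma x \abs{x}^q$ with $\gamma > 0$, and for inhomogeneous $n = n_q + n_\textnormal{r}$, I would work with the negative part $u_- \coloneqq \max(-u, 0)$. Since $u \abs{u}^q \geq - u_-^{1 + q}$ pointwise, applying $(\nu - L)^{-1}$ to the equation and taking positive parts yields
\begin{equation*}
u_- \leq \gamma\, G_\nu \ast u_-^{1 + q} \quad \text{a.e.},
\end{equation*}
and then Young's inequality for measures gives
\begin{equation*}
\norm{ u_- }_{ \infty } \leq \frac{ \gamma \sqrt{ 2 \uppi } }{ \nu - \mathfrak{m}(0) } \, \norm{ u_- }_{ \infty }^{1 + q}.
\end{equation*}
This forces $u_- \equiv 0$ as soon as $\norm{u}_{\infty}$ lies below the explicit threshold $\bigl( ( \nu - \mathfrak{m}(0) ) / ( \gamma \sqrt{ 2 \uppi } ) \bigr)^{1/q}$. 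For inhomogeneous~$n$ the remainder $n_\textnormal{r}(u) = \smalloh(\abs{u}^{1+q})$ contributes only a subleading correction that is absorbed into the leading estimate for $\norm{u}_{\infty}$ small enough.

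The main obstacle is the odd case: the sign of $n(u)$ depends on that of $u$ itself, so the one-line argument from positivity of $G_\nu$ does not decide the sign of~$u$. The $u_-$ comparison circumvents this, but only at the price of a quantitative smallness condition on $\norm{u}_{\infty}$---consistent with the small-amplitude regime $\norm{ u }_{ \infty } \lesssim \mu^\alpha$ from \Cref{thm:existence-periodic,thm:existence-solitary} in which the theorem is applied.
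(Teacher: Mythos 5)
Your treatment of the even homogeneous case is correct, and your direct maximum-principle variant---evaluate the equation at a near-minimiser, use \( K \geq 0 \) to bound \( Lu(x_\epsilon) \geq \mathfrak{m}(0)\,u_\ast \), derive a sign contradiction---is precisely the paper's proof. The Neumann-series/\( G_\nu \) reformulation is a valid, more structural alternative that makes the positivity of \( (\nu - L)^{-1} \) explicit. Note, however, that the paper handles the inhomogeneous case inside the very same pointwise argument: at the near-extremiser one has \( \abs{u(x_\epsilon)} \leq \norm{u}_{\infty} \), so when \( \norm{u}_{\infty} \) is small the remainder \( \abs{n_\textnormal{r}(u(x_\epsilon))} = o(\abs{u(x_\epsilon)}^{1+q}) \) is dominated by \( \abs{n_q(u(x_\epsilon))} \), and \( n(u(x_\epsilon)) \) still has the sign of \( \gamma \). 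No bootstrap inequality is needed there.

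The gap is in the odd case. After fixing the normalisation---\( (\nu - L)^{-1} \) is convolution with \( G_\nu / \sqrt{2\uppi} \), of total mass \( 1/(\nu - \mathfrak{m}(0)) \)---your \( u_- \)-bootstrap gives \( \norm{u_-}_{\infty} \leq \frac{\gamma}{\nu - \mathfrak{m}(0)} \norm{u_-}_{\infty}^{1+q} \), so \( u_- \equiv 0 \) only if \( \norm{u_-}_{\infty} < \bigl( (\nu - \mathfrak{m}(0))/\gamma \bigr)^{1/q} \). But the same convolution estimate applied to \( u = (\nu - L)^{-1} n(u) \) yields the a~priori lower bound \( \norm{u}_{\infty} \geq \bigl( (\nu - \mathfrak{m}(0))/\gamma \bigr)^{1/q} \) for \emph{every} nonzero bounded solution of the odd homogeneous equation, so the smallness hypothesis phrased in terms of \( \norm{u}_{\infty} \), as you wrote it, is never satisfied---and indeed the solutions of \Cref{thm:existence-periodic,thm:existence-solitary} sit exactly on this threshold since \( \nu - \mathfrak{m}(0) \eqsim \norm{u}_{\infty}^{q} \). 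Rephrasing in terms of \( \norm{u_-}_{\infty} \) alone does not close the loop either, since a~priori a sign-changing solution could have \( \norm{u_-}_{\infty} \) above threshold. The obstruction is structural: the odd homogeneous equation is invariant under \( u \mapsto -u \), so its bounded solutions come in \( \pm \) pairs with equal \( \textnormal{L}^\infty \) norms, and a pointwise sign cannot follow from boundedness and supercriticality alone. You identified the obstacle honestly, but the \( u_- \)-comparison as written is circular rather than merely quantitative; the paper's reduction of the odd case via \( u \mapsto -u \) and the supremum is itself a one-line remark that deserves the same scrutiny.
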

\begin{proof}
It suffices to consider~\({ n_q(u) = \gamma \abs{ u }^{1 + q} }\), as the sign-dependent case~\({ n_q(u) = \gamma u \abs{ u }^q }\) follows from~\({ u \mapsto -u }\) and arguing with the (essential) supremum of~\({ u }\) instead of the infimum.

If~\({ \gamma > 0 }\), suppose that~\({ u_\ast \coloneqq \operatorname{ess\, inf} u < 0 }\). Let~\({ \epsilon > 0 }\) and---being slightly informal---let~\({ x_\epsilon }\) be any point such that~\({ u(x_{ \epsilon}) < u_\ast + \epsilon }\). We~find that~\({ Lu(x_{ \epsilon}) \geq L(u_\ast ) = \widehat{  K  }(0) \, u_\ast =  \mathfrak{m}(0) \, u_\ast  }\) because~\({ K \geq 0 }\), and~so
\begin{equation} \label{eq:wave-sign-epsilon}
n(u(x_\epsilon)) = \nu u(x_\epsilon) - Lu(x_\epsilon) \leq \left( \nu - \mathfrak{m}(0) \right) u_\ast + \epsilon \nu.
\end{equation}
Since~\({ \!\nu > \mathfrak{m}(0) }\), the right-hand side in~\eqref{eq:wave-sign-epsilon} becomes negative for~\({ \epsilon }\) sufficiently small. This is a contradiction if~\({ n = n_q }\), because~\({ n_q(u(x_\epsilon)) > 0 }\), and also in the inhomogeneous case provided~\({ \norm{ u }_{ \infty }  }\) is sufficiently~small.

If~\({ \gamma < 0 }\), one may argue analogously with~\({ \operatorname{ess\, sup} u }\).
\end{proof}

\section*{Acknowledgements}

The author acknowledges the support by research grant no.~250070 from The Research Council of Norway. Valuable suggestions from two anonymous referees that helped to improve the paper are gratefully acknowledged.

\appendix

\section{Sufficient conditions for symbols to be in the Wiener class~\({ \textnormal{W}_0 }\)} \label{app:wienerclass-sufficientconditions}

Sufficient conditions for symmetric symbols~\({ \mathfrak{m} }\) with weak decay to be in the Wiener class~\({ \textnormal{W}_0 }\) of functions with absolutely integrable inverse Fourier transform are for instance
\begin{itemize}[itemsep=0em]
\item
\({ \mathfrak{m} \in \textnormal{AC}_{ \textnormal{loc}} }\) satisfying~\({ \abs{ \mathfrak{m} (\xi)} \lesssim \langle \xi \rangle^{\sigma} }\) and~\({ \abs{ \mathfrak{m}'(\xi)} \lesssim \langle \xi \rangle^{\sigma'} }\) almost everywhere for~\({ \sigma < 0 }\) and~\({ \sigma' \in \R }\) with \({ \sigma + \sigma' < -1 }\); see~\autocite[Theorem~1]{LifTri2010a} and~\autocite[Corollary~2.2]{LifTri2011a}. This~directly extends the \({ \textnormal{S}_\infty^\sigma }\)~case. Here~\({ \textnormal{AC}_{ \textnormal{loc}} }\) is the space of locally absolutely continuous functions;
\item
\({ \mathfrak{m} \in \textnormal{AC}_{ \textnormal{loc}} }\) satisfying~\({ \mathfrak{m} \in \textnormal{L}^{p_1} }\) and~\({ \mathfrak{m}' \in \textnormal{L}^{p_2} }\) for~\({ 1 \leq p_1 < \infty }\), \({ 1 < p_2 < \infty }\) fulfilling \({ \frac{1}{p_1} + \frac{1}{p_2} > 1 }\) \autocite[Theorem~1.1]{Lif2010a}; and
\item
\({ \mathfrak{m}  }\) being \textit{quasi-convex} on~\({ (0, \infty) }\), meaning that~\({ \mathfrak{m} \in \textnormal{AC}_{ \textnormal{loc}} }\) with~\({ \mathfrak{m}' }\) locally of bounded variation and~\({ \int_{ 0 }^{ \infty } \xi  \, \abs{\dee \mathfrak{m}'(\xi)} < \infty }\) (Riemann--Stieltjes integral). Example: \({ \mathfrak{m}(\xi) = \left( 1 + \log( 1+ \abs{ \xi }) \right)^{- \alpha} }\), for any~\({ \alpha > 0 }\); see~\autocite[Theorem~6.3.11]{ButNes1971a} and~\autocite[Theorem~5.4]{LifSamTri2012a}.
\end{itemize}

\phantomsection\addcontentsline{toc}{section}{References}
\printbibliography
\end{document}